\documentclass[11pt]{amsart}
\usepackage[english]{babel}
\usepackage[utf8]{inputenc}

\usepackage[inner=1.1in, outer=1.1in, bottom =2.9cm, top =2.9cm]{geometry}

\pretolerance=10000
\tolerance=2000 
\emergencystretch=10pt



\makeatletter
\newcommand*\Cdot{\mathpalette\Cdot@{.5}}
\newcommand*\Cdot@[2]{\mathbin{\vcenter{\hbox{\scalebox{#2}{$\m@th#1\bullet$}}}}}
\makeatother

\usepackage{amsmath, amssymb, amsfonts, amsthm}
\usepackage{graphicx, overpic}
\usepackage{mathrsfs}
\usepackage{mathtools}
\usepackage[usenames,dvipsnames,svgnames,table]{xcolor}
\usepackage{enumerate}
\usepackage{enumitem}
\usepackage{shuffle} 
\usepackage{lipsum}
\usepackage{color}
\usepackage{phoenician}
\usepackage{xkeyval}
\usepackage{mathrsfs}
\usepackage{tikz}
\usepackage{pst-node} 
\usepackage{tikz-cd} 
\usepackage[OT2,T1]{fontenc} 
\usepackage{thm-restate} 
\usepackage{float}
\usepackage{eucal}
\usepackage{microtype}
\usepackage[strict]{changepage} 
\usepackage{esvect}
\usepackage{etoolbox}
\usepackage{wrapfig}
\usepackage{caption}
\usepackage{cite}
\usepackage{mathrsfs}
\DeclareMathAlphabet{\mathpzc}{OT1}{pzc}{m}{it}
\usepackage{bbding}
\usepackage{array}
\usepackage{makecell}
\usepackage{stmaryrd}
\usepackage{lmodern}
\usepackage{amsbsy}
\usepackage{esvect}
\usepackage{bbding}
\usepackage{thm-restate}
\usepackage[toc,page, titletoc,title]{appendix}
\graphicspath{ {figures/} }
\usepackage{url}
\usepackage[pagebackref]{hyperref} 

\makeatletter
\providecommand*{\twoheadrightarrowfill@}{%
  \arrowfill@\relbar\relbar\twoheadrightarrow
}
\providecommand*{\twoheadleftarrowfill@}{%
  \arrowfill@\twoheadleftarrow\relbar\relbar
}
\providecommand*{\xtwoheadrightarrow}[2][]{%
  \ext@arrow 0579\twoheadrightarrowfill@{#1}{#2}%
}
\providecommand*{\xtwoheadleftarrow}[2][]{%
  \ext@arrow 5097\twoheadleftarrowfill@{#1}{#2}%
}
\makeatother

\makeatletter
\newcommand*{\relrelbarsep}{.386ex}
\newcommand*{\relrelbar}{%
  \mathrel{%
    \mathpalette\@relrelbar\relrelbarsep
  }%
}
\newcommand*{\@relrelbar}[2]{%
  \raise#2\hbox to 0pt{$\m@th#1\relbar$\hss}%
  \lower#2\hbox{$\m@th#1\relbar$}%
}
\providecommand*{\rightrightarrowsfill@}{%
  \arrowfill@\relrelbar\relrelbar\rightrightarrows
}
\providecommand*{\leftleftarrowsfill@}{%
  \arrowfill@\leftleftarrows\relrelbar\relrelbar
}
\providecommand*{\xrightrightarrows}[2][]{%
  \ext@arrow 0359\rightrightarrowsfill@{#1}{#2}%
}
\providecommand*{\xleftleftarrows}[2][]{%
  \ext@arrow 3095\leftleftarrowsfill@{#1}{#2}%
}
\makeatother

\makeatletter
\newcommand{\colim@}[2]{%
  \vtop{\m@th\ialign{##\cr
    \hfil$#1\operator@font colim$\hfil\cr
    \noalign{\nointerlineskip\kern1.5\ex@}#2\cr
    \noalign{\nointerlineskip\kern-\ex@}\cr}}%
}
\newcommand{\colim}{%
  \mathop{\mathpalette\colim@{\rightarrowfill@\scriptscriptstyle}}\nmlimits@
}
\renewcommand{\varprojlim}{%
  \mathop{\mathpalette\varlim@{\leftarrowfill@\scriptscriptstyle}}\nmlimits@
}
\renewcommand{\varinjlim}{%
  \mathop{\mathpalette\varlim@{\rightarrowfill@\scriptscriptstyle}}\nmlimits@
}
\makeatother

\DeclareSymbolFont{cyrletters}{OT2}{wncyr}{m}{n}
\DeclareMathSymbol{\Sh}{\mathalpha}{cyrletters}{"58}

\makeatletter
\newcommand*\bigcdot{\mathpalette\bigcdot@{.5}}
\newcommand*\bigcdot@[2]{\mathbin{\vcenter{\hbox{\scalebox{#2}{$\m@th#1\bullet$}}}}}
\makeatother

\usetikzlibrary{tikzmark,decorations.pathreplacing}
\usetikzlibrary{shapes,shadows,arrows}
\usetikzlibrary{decorations.markings}
\usetikzlibrary{positioning,calc}
\tikzset{near start abs/.style={xshift=1cm}}


\DeclareSymbolFont{symbolsC}{U}{txsyc}{m}{n}
\DeclareMathSymbol{\Searrow}{\mathrel}{symbolsC}{117}

\hypersetup{
	colorlinks=true,
	linkcolor=black,
	filecolor=black,      
	urlcolor=blue,
	citecolor= 	red,
}
\urlstyle{same}
\DeclareSymbolFont{extraup}{U}{zavm}{m}{n}
\DeclareMathSymbol{\varheart}{\mathalpha}{extraup}{86}
\DeclareMathSymbol{\vardiamond}{\mathalpha}{extraup}{87}

\newcommand{\bigslant}[2]{{\raisebox{.2em}{$#1$}\left/\raisebox{-.2em}{$#2$}\right.}}

\theoremstyle{definition}
\newtheorem{thm}{Theorem}[section]
\newtheorem{cor}{Corollary}[thm]
\newtheorem{lem}[thm]{Lemma}
\newtheorem{prop}[thm]{Proposition}

\theoremstyle{definition}

\newtheorem{remark}{Remark}[section]




\newcommand{\gs}{\sigma}



\newcommand{\tH}{\mathtt{H}}
\newcommand{\tQ}{\mathtt{Q}}

\newcommand{\tM}{\mathtt{M}}
\newcommand{\tE}{\mathtt{E}}

\newcommand{\tY}{\mathtt{Y}}

\newcommand{\tC}{\mathtt{C}}

\newcommand{\tP}{\mathtt{P}}

\newcommand{\tm}{\mathtt{m}}

\newcommand{\tc}{\mathtt{c}}

\newcommand{\tp}{\mathtt{p}}



\newcommand{\bR}{\mathbb{R}}
\newcommand{\bN}{\mathbb{N}}
\newcommand{\bC}{\mathbb{C}}

\newcommand{\bB}{\mathbb{B}}
\newcommand{\bZ}{\mathbb{Z}}


\newcommand{\cC}{\CMcal{C}}

\newcommand{\cP}{\CMcal{P}}

\newcommand{\cF}{\CMcal{F}}

\newcommand{\cG}{\CMcal{G}}

\newcommand{\cH}{\CMcal{H}}

\newcommand{\cS}{\CMcal{S}}

\newcommand{\cT}{\CMcal{T}}
\newcommand{\cK}{\CMcal{K}}
\newcommand{\cX}{\CMcal{X}}

\newcommand{\cU}{\CMcal{U}}



\newcommand{\Hom}{\operatorname{Hom}}


\newcommand{\bE}{\textbf{E}}

\newcommand{\bO}{\textbf{O}}

\newcommand{\sS}{\textsf{S}}

\newcommand{\Lie}{ \textbf{Lie} }
\newcommand{\Com}{\textbf{Com}}
\newcommand{\Ass}{\textbf{Ass}}

\newcommand{\bL}{\textbf{L}}
\newcommand{\Gam}{\boldsymbol{\Gamma}}
\newcommand{\Sig}{\boldsymbol{\Sigma}}  


\newcommand{\shuff}{\hat{\boldsymbol{\Sigma}}^\ast} 
\newcommand{\adshuff}{\check{\boldsymbol{\Sigma}}^\ast}


\newcommand{\res}{\parallel}


\newcommand{\bT}{\mathbb{T}}




\makeatletter
\@namedef{subjclassname@2020}{\textup{2020} Mathematics Subject Classification}
\makeatother

\newcommand{\la}{\langle}
\newcommand{\ra}{\rangle}


\newcommand{\wt}{\widetilde}


\definecolor{Red}{rgb}{0.8,0,0.2}

\newcommand{\GG}[1]{}


\makeatletter
\def\@footnotecolor{red}
\define@key{Hyp}{footnotecolor}{%
 \HyColor@HyperrefColor{#1}\@footnotecolor%
}
\def\@footnotemark{%
    \leavevmode
    \ifhmode\edef\@x@sf{\the\spacefactor}\nobreak\fi
    \stepcounter{Hfootnote}%
    \global\let\Hy@saved@currentHref\@currentHref
    \hyper@makecurrent{Hfootnote}%
    \global\let\Hy@footnote@currentHref\@currentHref
    \global\let\@currentHref\Hy@saved@currentHref
    \hyper@linkstart{footnote}{\Hy@footnote@currentHref}%
    \@makefnmark
    \hyper@linkend
    \ifhmode\spacefactor\@x@sf\fi
    \relax
  }%
\makeatother

\hypersetup{footnotecolor=blue}

\makeatletter
\patchcmd{\@startsection}
  {\@afterindenttrue}
  {\@afterindentfalse}
  {}{}
\makeatother

\title[Hopf Monoids, Permutohedral Cones, and Generalized Retarded Functions]{Hopf Monoids, Permutohedral Cones, and\\ Generalized Retarded Functions}
\author{William Norledge}
\address[William Norledge]{Pennsylvania State University}
\email{wxn39@psu.edu}
\author{Adrian Ocneanu}
\address[Adrian Ocneanu]{Pennsylvania State University}
\email{axo2@psu.edu}
\subjclass[2020]{14D21, 18M80, 51F15, 51M20, 81T20}
\thanks{The second author was partly supported by NSF grants DMS-9970677, DMS-0200809, DMS-0701589. Both authors were partly supported by Templeton Religion Trust grant TRT-0159.}

\begin{document}

\renewcommand{\chapterautorefname}{Chapter}
\renewcommand{\sectionautorefname}{Section}
\renewcommand{\subsectionautorefname}{Section}

\renewcommand{\chapterautorefname}{Chapter}
\renewcommand{\sectionautorefname}{Section}
\renewcommand{\subsectionautorefname}{Section}

\begin{abstract}
The commutative Hopf monoid of set compositions is a fundamental Hopf monoid internal to vector species, having undecorated bosonic Fock space the combinatorial Hopf algebra of quasisymmetric functions. We construct a geometric realization of this Hopf monoid over the adjoint of the (essentialized) braid hyperplane arrangement, which identifies the monomial basis with signed characteristic functions of the interiors of permutohedral tangent cones. We show that the indecomposable quotient Lie coalgebra is obtained by restricting functions to chambers of the adjoint arrangement, i.e. by quotienting out the higher codimensions. The resulting functions are characterized by the Steinmann relations of axiomatic quantum field theory, demonstrating an equivalence between the Steinmann relations, tangent cones to (generalized) permutohedra, and having algebraic structure internal to species. Our results give a new interpretation of a construction appearing in the mathematically rigorous formulation of renormalization by Epstein-Glaser, called causal perturbation theory. In particular, we show that operator products of time-ordered products correspond to the \hbox{H-basis} of the cocommutative Hopf monoid of set compositions, and generalized retarded products correspond to a spanning set of its primitive part Lie algebra.
\end{abstract}

\maketitle

\setcounter{tocdepth}{1} 
\hypertarget{foo}{ }
\tableofcontents

\section*{Introduction}\label{intro}

André Joyal's theory of set species \cite{joyal1981theorie}, \cite{joyal1986foncteurs}, \cite{bergeron1998combinatorial}, and more generally stuff types \cite{baez2001finite}, \cite{morton2006categorified}, is the result of applying what is sometimes called the gauge principle, or categorification, to exponential generating functions in enumerative combinatorics, which, in this context, says:
\begin{center}
\textit{do not identify sets with the same cardinality; instead, just remember all the ways in which}\\[3pt]
\textit{they can be identified, that is, remember the bijections between them.}\smallskip
\end{center}
This amounts to using arbitrary finite sets to label combinatorial objects, instead of always using e.g. $[n]=\{1,\dots,n\}$, and taking sets of labeled objects acted on by relabelings, instead of just taking their cardinalities. Every set species induces a generating function by forgetfully decategorifying. More recently, Aguiar and Mahajan \cite{aguiar2010monoidal}, \cite{aguiar2013hopf} showed that the plethora of combinatorial graded Hopf algebras which appear in the literature are similarly decategorifications of Hopf monoids internal to vector species. Their species approach beautifully unifies the study of combinatorial Hopf algebras.

Formally, a set (resp. vector) species $\textbf{p}$ is a presheaf of sets (resp. vector spaces) on the category $\textsf{S}$ of finite sets and bijections.\footnote{\ we refer to copresheaves on $\textsf{S}$ as \emph{cospecies}} The value $\textbf{p}[I]$ of $\textbf{p}$ on a finite set $I$ is interpreted as the (linearized) collection of all combinatorial objects of a certain type which have been labeled by $I$. Up to isomorphism, a species is an infinite sequence of objects such that the $n$th object is equipped with a right action of the symmetric group $\textsf{S}_n$ of the set $\{1,\dots, n\}$, sometimes called a `symmetric sequence' or `S-module'.

Let $(\textsf{g})\textsf{Vec}$ denote the category of ($\bN$-graded) vector spaces over a field $\Bbbk$ of characteristic zero. Joyal showed that vector species are equivalently analytic endofunctors on $\textsf{Vec}$ via a certain generalized bosonic Fock space construction, given by
\[   
\overline{\cK}_{(-)}(\textbf{p}): \textsf{Vec} \to   \textsf{gVec}, \qquad     V\mapsto  \overline{\cK}_{V}(\textbf{p}) =\bigoplus_{ n\in \bN } \textbf{p}[n] \otimes_{\textsf{S}_n}      V^{ \otimes n }.\footnote{\ where $\textbf{p}[n]= \textbf{p}[\{ 1, \dots, n \}]$}   
\]
Thus, the $\textbf{p}[n]$ get treated as coefficients of a power series whose argument is a vector space. The classical bosonic Fock space is recovered by setting $\textbf{p}$ equal to the exponential species $\textbf{E}$, which has $\textbf{E}[I]=\Bbbk$ for all finite sets $I$.\footnote{\ Bo\.zejko, Gu\c t\u a and Maassen showed that creation-annihilation operators can be generalized to this setting \cite{guta00}, \cite{MR1923173}, and these ideas were further developed in \cite[Chapter 19]{aguiar2010monoidal}} The analytic endofunctor $\widehat{\textbf{p}}$ associated to $\textbf{p}$ is obtained by forgetting the grading,
\[
\widehat{\textbf{p}}: \textsf{Vec} \to \textsf{Vec}, \qquad V\mapsto  \overline{\cK}_{V}(\textbf{p}) 
.\]
In the case that $\textbf{p}[I]=0$ for all large enough $I$, i.e. `polynomial species', one recovers classical Schur functors $ \textsf{finVec}\to  \textsf{finVec}$. 


Species may be equipped with a handful of monoidal products, which categorify familiar operations on formal power series. In particular, we can take the Day convolution of set species or vector species with respect to the disjoint union of finite sets,
\[    \textbf{p}_1\bigcdot  \textbf{p}_2[I]=   \textbf{p}_1\otimes_{\text{Day}} \textbf{p}_2[I]=  \coprod_{S\sqcup T=I} \textbf{p}_1[S] \otimes \textbf{p}_2[T]    .  \]
This is often called the Cauchy product of species, since it categorifies the Cauchy product of formal power series. It is induced by `pointwise multiplying' species as analytic endofunctors,
\[
\widehat{\textbf{p}_1 \bigcdot \textbf{p}_2}(V)\cong   \widehat{\textbf{p}}_1(V)\otimes \widehat{\textbf{p}}_2(V)
.\] 
Aguiar and Mahajan's Hopf theory in species concerns Hopf monoids defined in set species and vector species with respect to the Day convolution. In the case of Hopf monoids in vector species, we also have internal Lie (co)algebras and universal (co)enveloping algebras, and analogs of the Poincar\'e-Birkhoff-Witt and Cartier-Milnor-Moore theorems. The undecorated bosonic Fock space of a vector species is its image under the functor
\[\overline{\cK}(-) = \overline{\cK}_{\Bbbk}(-) :\textsf{VecSp}\to \textsf{gVec}, \qquad \textbf{p}\mapsto \overline{\cK}_{\Bbbk}(\textbf{p})=   \bigoplus_{ n\in \bN } \big (\textbf{p}[n]\big )_{\textsf{S}_n}       .\] 
Many well-known graded Hopf algebras in combinatorics are the undecorated Fock spaces of Hopf monoids in vector species \cite[Part III]{aguiar2010monoidal}, the crucial point being that the various generalized Fock space constructions preserve Hopf monoids. 
 

Note that (symmetric May) operads are also monoids internal to species, but with respect to the monoidal product induced by composing species as analytic endofunctors, called plethysm,\footnote{\ plethysm was originally used as a name for the image of this monoidal product in the Grothendieck ring of the category of vector species, which is the ring of symmetric functions $\Lambda=\textbf{Sym}$} i.e. the structure of an operad on $\textbf{p}$ is equivalently the structure of a monad on $\widehat{\textbf{p}}$. As long as $\textbf{p}_2[\emptyset]=0$, plethysm is given by
\[    \textbf{p}_1\circ  \textbf{p}_2[I]= \coprod_{ P } \textbf{p}_1[P] \otimes \bigotimes_{S_j\in P} \textbf{p}_2[S_j] .  \]
The coproduct is over all set partitions $P=\{S_1,\dots,S_k\}$ of $I$. This monoidal product categorifies the composition of formal power series. 

There is an equivalent description of Hopf theory in species in terms of left (co)modules of the (co)operads $\Com^{ (\ast) }$, $\Ass^{ (\ast) }$, $\Lie^{ (\ast) }$ \cite[Appendix B.5]{aguiar2010monoidal}, i.e. (co)algebras over the corresponding left (co)action (co)monads. This is a useful perspective; it puts Hopf theory in species within the context of both the generalization to \emph{right} (co)modules of (co)operads, and the generalization to (co)algebras over other (co)monads on species, in particular algebras over $\textsf{S}$-colored operads \cite[Section 2.3.2]{ward2019massey}, \cite[Section 3]{MR3134040}, also studied in the guise of $\cF$-ops for a Feynman category $\cF$ \cite[Definition 1.5.1]{MR3636409}, for example modular operads. 



For the foundations of Hopf theory in species, see \cite{aguiar2010monoidal}, \cite{aguiar2013hopf}. Aguiar and Mahajan's clean category-theoretic approach clarifies and generalizes the work of several people, in particular Barratt \cite{barratt1978twisted}, Joyal \cite{joyal1986foncteurs}, Schmitt \cite{Bill93}, and Stover \cite{stover1993equivalence}. The reflection hyperplane arrangement of the type $A$ root system, called the braid arrangement, provides consistent geometric interpretations of the theory, which motivates the development of aspects of the theory over generic real hyperplane arrangements \cite{aguiar2017topics}, \cite{aguiar2020bimonoids}.\footnote{\ Aguiar and Mahajan say in \cite{aguiar2020bimonoids} that a more structured theory for reflection hyperplane arrangements, e.g. other Dynkin types, exists, and will be the subject of a separate work.} In this paper, we stay in type $A$, but we extend the geometric interpretations to the adjoint\footnote{\ in the sense of \cite[Section 1.9.2]{aguiar2017topics}} of the braid arrangement. The adjoint braid arrangement lives in the dual root space, and consists of hyperplanes which are spanned by coroots. This hyperplane arrangement has several names. It is known as the restricted all-subset arrangement \cite{kamiya2010ranking}, \cite{kamiya2012arrangements}, \cite{billera2012maximal}, \cite[Section 6.3.12]{aguiar2017topics}, the resonance arrangement \cite{MR2836109}, \cite{cavalierires}, \cite{billerabooleanprod}, \cite{gutekunst2019root}, and the root arrangement \cite{MR3917218}. Its spherical representation is called the Steinmann planet, or Steinmann sphere, by physicists, e.g. \cite[Figure A.4]{epstein2016}, which may be identified with the boundary of the convex hull of coroots. The Steinmann sphere is the adjoint analog of the type $A$ Coxeter complex, see \autoref{fig:convexhulls}. 


 \begin{figure}[t]
	\centering
	\includegraphics[scale=0.5]{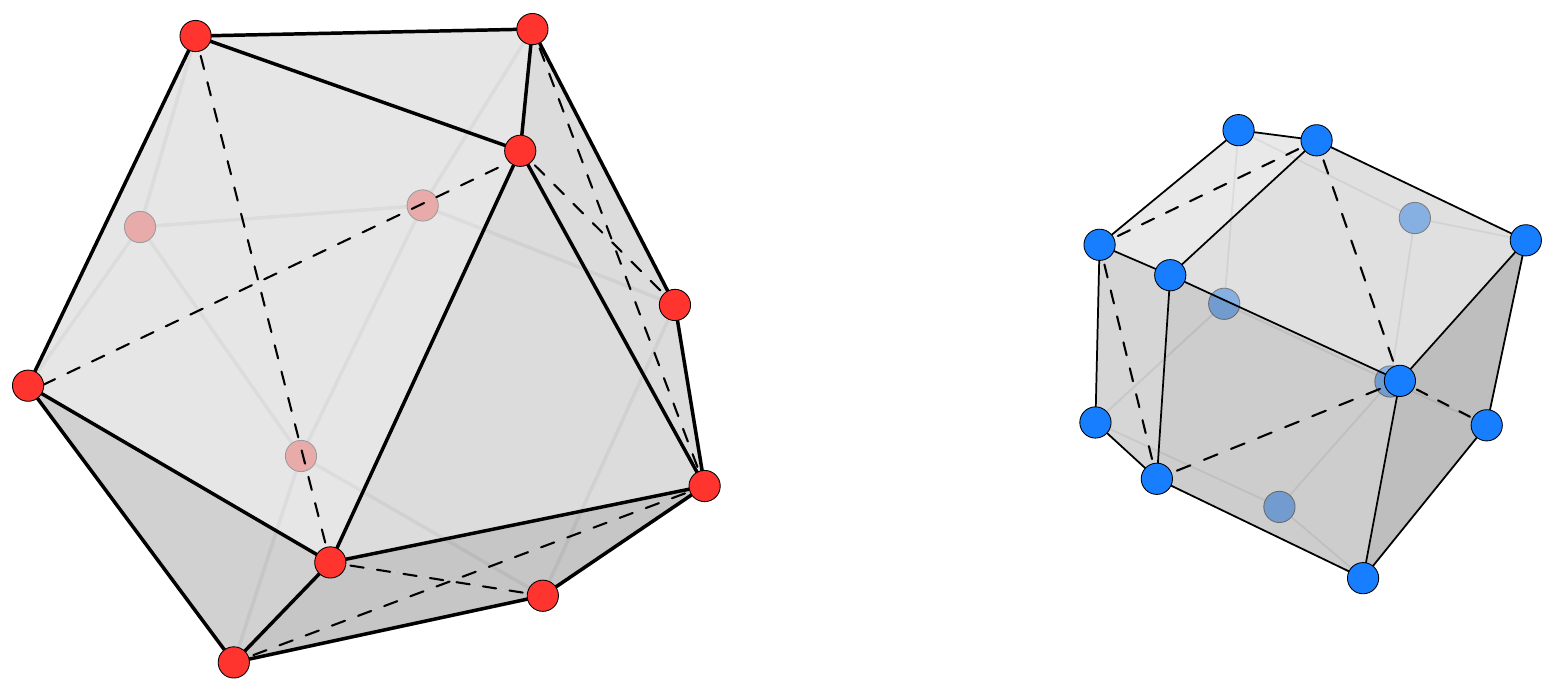}
	\caption{The convex hulls of the type $A$ coroots (left) and fundamental weights (right) in four coordinates. The intersections with the adjoint braid arrangement and braid arrangement are shown with dotted lines, which define the Steinmann sphere and Coxeter complex respectively.}
	\label{fig:convexhulls}
\end{figure} 

At the heart of Hopf theory in species is the cocommutative Hopf monoid of set compositions $\Sig$, together with its dual, the commutative Hopf monoid of set compositions $\Sig^\ast$. These Hopf monoids play a special role because set compositions index (co)associative algebraic operations in species. More familiar objects are perhaps the undecorated bosonic Fock spaces of $\Sig$ and $\Sig^\ast$, which are the algebras of noncommutative symmetric functions and quasisymmetric functions respectively,
\[ \text{N}\Lambda=   \textbf{NSym}\cong \overline{\cK}(\Sig) \qquad \text{and} \qquad \text{Q}\Lambda= \textbf{QSym}\cong \overline{\cK}(\Sig^\ast).\footnote{\ these algebras play the role of symmetric functions $\Lambda= \textbf{Sym}$ in quantum Schur-Weyl duality at $q=0$ \cite[Sections 3-4]{thibon01nsymqsym}}    \]
See \cite[Chapter 17]{aguiar2010monoidal}. In this paper, we geometrically realize $\Sig^\ast$ over the adjoint braid arrangement by identifying the monomial basis with signed characteristic functions of open permutohedral tangent cones (in fact, we shall think of them as formal linear combinations of open conical spaces, and so refer to them as characteristic functionals). We denote this realization by $\adshuff$. A related construction appears in \cite[Section 5]{menous2013mould} for the undecorated \emph{full} Fock space of $\boldsymbol{\Sigma}^\ast$, which is word quasisymmetric functions   
\[\text{P}\Pi=\textbf{WQSym}\cong \cK(\Sig^\ast) ,\] 
also called noncommutative quasisymmetric functions $\textbf{NCQSym}$. This graded Hopf algebra has been studied in several places, e.g. \cite[Section 6.2.4]{MR2225808}, \cite{MR2209212},  \cite{MR2228332},    \cite{MR2555523}. 
 
There is a more classical geometric realization of $\Sig^\ast$ over the braid arrangement, where the monomial basis is identified with characteristic functions of relatively open faces. We denote this realization by $\shuff$. The realizations $\adshuff$ and $\shuff$ are dual in the sense of polyhedral algebras \cite[Theorem 2.7]{MR1731815}. To express this duality, we introduce the cone basis of $\Sig^\ast$, which is the image of set compositions under the standard homomorphism $\textbf{O}\twoheadrightarrow \Sig^\ast$, where $\textbf{O}$ is the commutative Hopf monoid of preposets. The geometric realizations of the cone basis are characteristic functions/functionals of closed convex cones. 

As we show, the beauty of the adjoint realization $\adshuff$ is that its indecomposable quotient is obtained by simply restricting functionals to chambers, i.e. by quotienting out the higher codimensions. We denote the resulting geometrically realized Lie coalgebra by $\check{\textbf{Z}}\textbf{ie}^\ast$. It is isomorphic to the Lie coalgebra $\textbf{Zie}^\ast$ which is the dual of the free Lie algebra on the positive exponential species
\[
\textbf{Zie}=\CMcal{L}ie(\textbf{E}_+^\ast)=  \Lie \circ \textbf{E}_+^\ast 
.\] 
See \cite[Section 11.9]{aguiar2010monoidal}. The adjoint analog of this construction, i.e. the restriction of $\shuff$ to Weyl chambers, is a geometric realization $\hat{\textbf{L}}^\ast$ of the commutative Hopf monoid of linear orders $\textbf{L}^\ast$, which is the dual of the universal enveloping algebra of the Lie operad
\[
\textbf{L}=  \mathcal{U} ( \Lie)=\textbf{E}^\ast \circ \Lie
.\] 
See \cite[Section 15]{aguiar2013hopf}. The observation we make here is that moving to the adjoint arrangement reverses the order of plethysm. The geometric aspect of this reversal is the following discussion. 

Let the \emph{symmetry} of a piecewise-constant function on a hyperplane arrangement be the degree to which it is constant in the direction of one-dimensional flats. Roughly speaking, `lumping' coordinates into lumps corresponds to higher codimensions on the braid arrangement and increased symmetry on the adjoint braid arrangement, whereas `cutting' coordinates into blocks corresponds to increased symmetry on the braid arrangement and higher codimensions on the adjoint braid arrangement. For precise definitions, see \autoref{preposet}.
\bgroup
\def\arraystretch{1.2}  
\begin{table}[H] 
\begin{tabular}{|c|c|c|}
\hline
  &   \begin{tabular}{@{}c@{}}higher codimensions\end{tabular} & \begin{tabular}{@{}c@{}}increased symmetry\end{tabular}   \\ \hline
 \begin{tabular}{@{}c@{}}braid arrangement \end{tabular}  &   lumping    &   cutting     \\ \hline
 \begin{tabular}{@{}c@{}}adjoint braid arrangement \end{tabular}  &   cutting    &   lumping     \\ \hline
\end{tabular}
\end{table}
\egroup
\noindent
Note also that lumping corresponds to right coactions of cooperads, whereas cutting corresponds to left coactions of cooperads. On the braid arrangement, by either quotienting out codimensions and then symmetry, or symmetry and then codimensions, we obtain the following commutative square,
\begin{center}
\begin{tikzcd}[column sep=large,row sep=large] 
\shuff  	\arrow[d, twoheadrightarrow, "\text{sym}"']     \arrow[r,  twoheadrightarrow,  "\text{codim}"]   &  \hat{\bL}^\ast  	\arrow[d,  twoheadrightarrow, "\text{sym}"]     \\
\hat{\textbf{Z}}\textbf{ie}^\ast 	\arrow[r,  twoheadrightarrow , "\text{codim}"']     & \hat{\textbf{L}}\textbf{ie}^\ast
\end{tikzcd}
\end{center}
On the adjoint braid arrangement, by either quotienting out codimensions and then symmetry, or symmetry and then codimensions, we obtain the following commutative square,
\begin{center}
\begin{tikzcd}[column sep=large,row sep=large] 
\adshuff   	\arrow[d, twoheadrightarrow, "\text{codim}"']     \arrow[r,  twoheadrightarrow,  "\text{sym}"]   & \check{\bL}^\ast  	\arrow[d,  twoheadrightarrow, "\text{codim}"]     \\
\check{\textbf{Z}}\textbf{ie}^\ast 	\arrow[r,  twoheadrightarrow , "\text{sym}"']     & \check{\textbf{L}}\textbf{ie}^\ast
\end{tikzcd}
\end{center}
In this paper, we only consider the quotients by codimensions.

Let $\textbf{L}^\vee$ denote the species of formal $\Bbbk$-linear combinations of chambers of the adjoint braid arrangement. The (classical) Steinmann relations are certain four-term linear relations on the components of $\textbf{L}^\vee$, first appearing in the foundations of axiomatic quantum field theory \cite{steinmann1960zusammenhang}, \cite{steinmann1960}, \cite[p. 827-828]{streater1975outline}. More recently, they have been studied in the context of the revived non-local approach to scattering amplitudes, where they appear to be related to cluster algebras \cite{drummond2018cluster}, \cite{caron2019cosmic}, \cite{Caron-Huot:2020bkp}. 

Let a Steinmann functional over $I$ be a linear functional on the vector space $\textbf{L}^\vee[I]$ which respects the Steinmann relations. In \cite{lno2019}, it was shown that Steinmann functionals (which were denoted there by $\Gam^\ast$) form a Lie coalgebra in species, with cobracket the discrete differentiation of functionals across hyperplanes. The Steinmann relations are exactly what one needs in order to have factorization of the derivative, and so they are necessary for a Lie cobracket. However, they are also sufficient, because it turns out that if you can factorize once (in all possible ways), then you can factorize arbitrarily often \cite[Theorem 5.3]{lno2019}. In this paper, we show that our geometric realization of $\textbf{Z}\textbf{ie}^\ast$ is precisely the Lie coalgebra of Steinmann functionals from \cite{lno2019},
\[      \Gam^\ast=\check{\textbf{Z}}\textbf{ie}^\ast  \qquad \big ( \cong \textbf{Z}\textbf{ie}^\ast=  \Lie^\ast  \circ \textbf{E}_+\big )  .   \]
Since $\check{\textbf{Z}}\textbf{ie}^\ast$ is equivalently the span of characteristic functionals of generalized permutohedral tangent cones, this result is clearly closely related to the universality of generalized permutohedra \cite[Theorem 6.1]{aguiar2017hopf}, see \autoref{ref}. 

Dually, we obtain a geometric realization $\check{\textbf{Z}}\textbf{ie}$ of the Lie algebra $\textbf{Zie}$ as the quotient of $\textbf{L}^\vee$ by the Steinmann relations, where the Lie structure (in the form of a left module of the Lie operad $\Lie$) is the action of semisimple Lie elements on faces \cite[Section 3.2]{lno2019}. The Lie algebra $\check{\textbf{Z}}\textbf{ie}$ is very closely related to a structure sometimes called the Steinmann algebra \cite[Section 6]{Ruelle}, \cite[Section III.1]{bros}, which is an ordinary graded Lie algebra based on the structure map of $\check{\textbf{Z}}\textbf{ie}$. In particular, see Ruelle's identity \cite[Equation 6.6]{Ruelle}, \cite[Section 4.3]{epstein1976general}, which describes the Lie bracket.

The dual polyhedral algebras $\shuff[n]$ and $\check{\Sig}^\ast[n]$ were first studied in \cite{early2017canonical}. Note that Early denotes $\check{\Sig}^\ast[n]$ by $\hat{\cP}^n$. The quotients of $\check{\Sig}^\ast[n]$ corresponding to the Lie cooperad $\textbf{Lie}^\ast$, linear orders $\textbf{L}^\ast$, and $\textbf{Zie}^\ast$, denoted respectively
\[ \cP_1^n\cong  \Lie^\ast[n], \qquad  \hat{\cP}_1^n\cong \textbf{L}^\ast[n], \qquad  \cP^n\cong \textbf{Zie}^\ast[n], \]
were considered in relation to the cone basis, and a certain second basis. The $\tc$-basis of $\textbf{Zie}^\ast$, defined in \autoref{lie}, corresponds to the image of this second basis. 



\subsection*{Perturbative Algebraic Quantum Field Theory} 

Our results give a new interpretation of a construction appearing in the mathematically rigorous formulation of renormalization by Epstein-Glaser \cite{ep73roleofloc},\footnote{\ based on earlier work of Stückelberg, Bogoliubov-Shirkov, and others} known as causal perturbation theory, and in the algebraic formalism of Epstein-Glaser-Stora \cite{egs74}, \cite{epstein1976general} for studying generalized retarded functions. Causal perturbation theory has since been absorbed into the modern mathematically clear and precise theory known as perturbative algebraic quantum field theory (pAQFT), see \cite{klaus2000micro}, \cite{rejzner2016pQFT}, \cite{dutsch2019perturbative}, \cite[\href{https://ncatlab.org/nlab/show/geometry+of+physics+--+perturbative+quantum+field+theory}{nLab}]{perturbative_quantum_field_theory}.

Let $I$ be a finite set, and let $\cX$ be a smooth manifold which is additionally a time-oriented globally hyperbolic Lorentzian manifold. In \cite[p. 157]{epstein2016}, a system of operator products of time-ordered products, or, after taking vacuum expectation values, a system of generalized \hbox{time-ordered} functions, is defined to be a function on set compositions $\Sigma[I]$ of $I$ into distributions on the space of configurations 
\[  \cX^I=\big \{\text{functions } I\to \cX   \big\}\] 
which satisfies certain physically motivated properties. In the case of a system of operator products of time-ordered products, these are operator-valued distributions, i.e. linear functionals sending compactly supported smooth functions to elements of the algebra of free quantum observables, which in pAQFT is a formal power series $\ast$-algebra obtained via Moyal formal deformation quantization. Note that the key property, known as causal factorization, which gives rise to the causal additivity of the corresponding perturbative $\text{S}$-matrix scheme, is naturally expressed in terms of an aspect of the Hopf structure of $\Sig$ known as the Tits product, see e.g. \cite[Section 1.4.6]{brown08}. The Tits product is the action of $\Sig$ on itself by Hopf powers \cite[Section 13]{aguiar2013hopf}. 

By taking compatible maps for each finite set $I$, we obtain a morphism of species, denoted
\[   
\text{T}:  \Sig \to  \text{Dist}(\cX^{(-)})
,\qquad  
(S_1,\dots, S_k)\mapsto \text{T}(S_1)\dots \text{T}(S_k)  
.\]
As far as we are aware, the species style notation $\text{T}(S_1)\dots\text{T}(S_k)$ for operator-valued distributions first appears in Steinmann's book \cite{steinbook71}, and then extensively in \cite{ep73roleofloc}, where all at once it exposes \hbox{species-theoretic} algebraic structures in QFT. The notation was formalized in terms of set compositions by Epstein-Glaser-Stora \cite[Section 4.1]{epstein1976general}. It is discussed in \cite[Remark 15.33]{perturbative_quantum_field_theory} (its use there is slightly different to \cite{ep73roleofloc}, but the same structure emerges). See also \cite[Chapter 4]{MR3753672}, where the connection with species is made. 

Let $\cU$ be the universal enveloping map which embeds the primitive part Lie algebra $\textbf{Zie}$ of $\Sig$,
\[
\cU:\textbf{Zie}\hookrightarrow \Sig
.\] 
See \autoref{lie}. In the setting where $\textbf{Zie}$ is realized over the adjoint braid arrangement, we show that the composition
\[      
\textbf{Zie}\xrightarrow{\cU}    \Sig \xrightarrow{\text{T}}  \text{Dist}(\cX^{(-)})     
\]
corresponds to the association of generalized retarded products, or generalized retarded functions, to chambers of the adjoint braid arrangement (=geometric cells of the Steinmann sphere), as defined in \cite[Equation 79, p. 260]{ep73roleofloc}, \cite[Equation 1, p.26]{epstein1976general}, \cite[Equations 35, 36]{epstein2016}. See also \cite{Huz1}, \cite[Section 9]{Huz2}. Thus, generalized retarded products/functions span the image of the primitive part of $\Sig$ in distributions. 

The primitive elements which correspond to chambers of the adjoint braid arrangement are called Dynkin elements in \cite[Chapter 14.1]{aguiar2017topics}, where they are constructed for arbitrary real hyperplane arrangements. These Dynkin elements are associated to generic halfspaces of a hyperplane arrangement, which are in natural bijection with chambers of the adjoint arrangement. 

It is well-known that graded Hopf algebras encode combinatorial aspects of renormalization in perturbative QFT \cite{connes1999hopf}, \cite{ebrahimi2005hopf}, \cite{figueroa2005combinatorial}, \cite{Kreimer05}. A self-contained introduction to this theory is given in \cite[Chapter 1]{connes08}. The species analogs of the Connes-Kreimer Hopf algebras of rooted trees have been defined by Aguiar and Mahajan \cite[Section 13.3]{aguiar2010monoidal}, and we direct the reader there for their relationship to the algebras we consider in this paper.

\subsection*{Losev-Manin Moduli Space} 

The usual geometric interpretation of $\Sig$ over the braid arrangement identifies the basis which is dual to the monomial $\tM$-basis, called the $\tH$-basis, with faces \cite[Chapter 10]{aguiar2010monoidal}. This is how Tits's classical interpretation of the Tits product, as projections of faces, is obtained. See also the geometric interpretation of Lie and Zie elements, which correspond to the (Lie algebra) homomorphisms
\[\Lie\hookrightarrow  \textbf{Zie}\hookrightarrow  \Sig,\] 
and their generalization to generic hyperplane arrangements \cite[Chapter 10]{aguiar2017topics}. Alternatively, we can identify the $\tH$-basis with faces of the permutohedron, which occurs naturally in the following construction. 

\begin{figure}[t]
	\centering
	\includegraphics[scale=0.67]{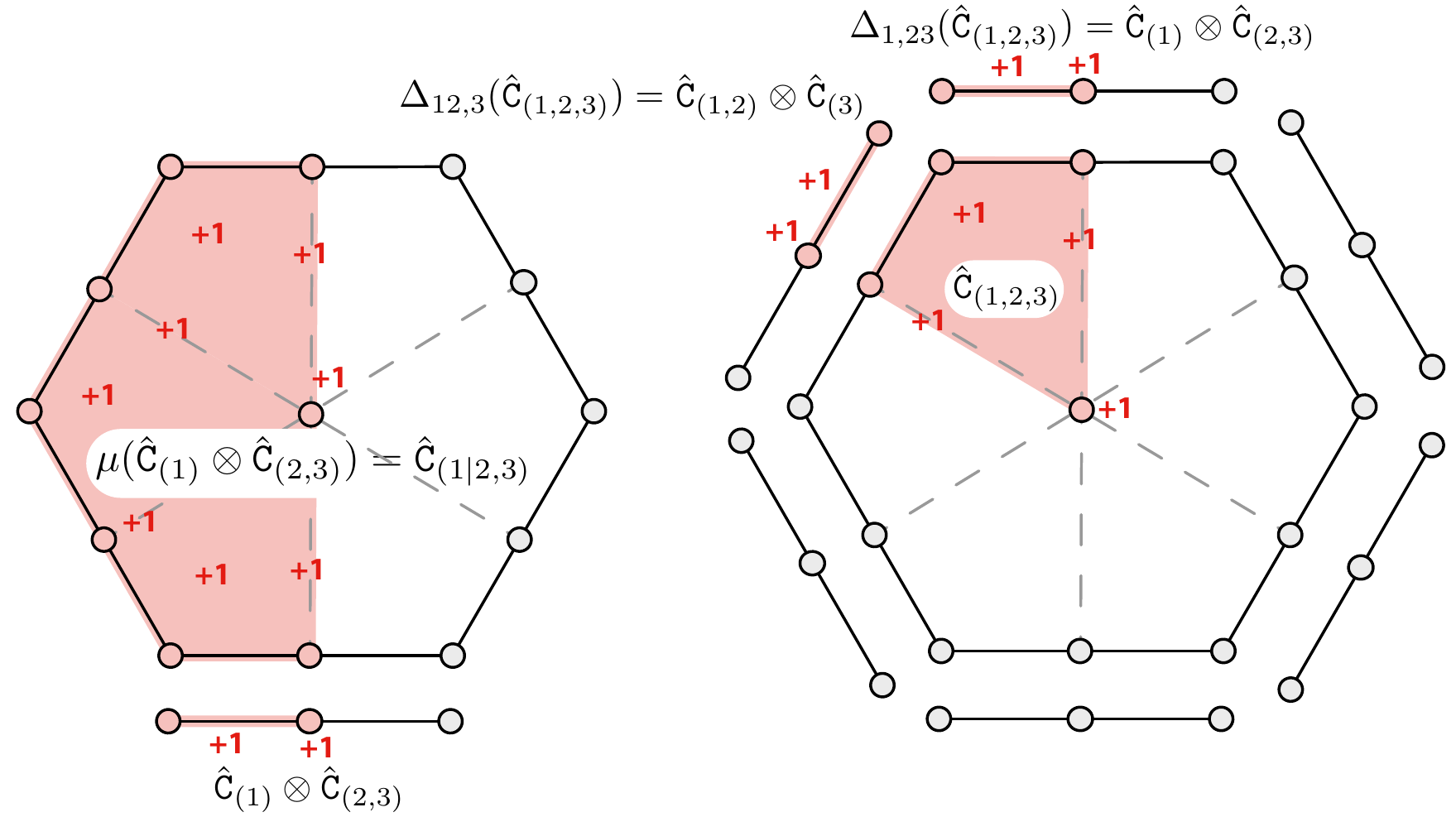}
	\caption{The geometric realization of the Hopf monoid $\Sig^\ast$, extended to the tropical toric compactification $\bT\Sigma$ of the type $A$ root system, showing multiplication $\mu$ (signed-quasishuffling, dual to projecting onto facets) and comultiplication $\Delta$ (deconcatenation, dual to embedding facets). The circular nodes show the $\bB$-points of $\bT\Sigma$, which may be identified with set compositions $\bB\Sigma=\Sigma$.}
	\label{fig:multcomult}
\end{figure}

First, let us consider the construction over the complex numbers $\bC$, which is more classical. Let $\bC\text{T}^I$ denote the moduli space parameterizing the `tube' $\bC^\times=\bC\setminus\{0\}$ with $I$-marked points,
\[
\bC\text{T}^I=(\bC^\times)^I/\bC^\times=\bigslant{\{\text{functions}\ I\to\bC^\times\}}{\bC^\times} 
.\] 
This is the maximal torus of $\text{PGL}_I(\bC)$. Let \emph{complex permutohedral space} $\bC\Sigma^I$ be the toric compactification of $\bC\text{T}^I$ with respect to the braid arrangement fan, sometimes called the toric variety associated to (type $A$) Weyl chambers \cite{procesipermvar}. Losev and Manin gave a realization of $\bC\Sigma^I$ as a moduli space parameterizing strings of Riemann spheres $\bC\text{P}^1$, glued at the poles, with $I$-marked points \cite{losevmanin}, \cite{batyrevblume10}. By generalizing our construction of $\Sigma$ in \autoref{comp}, as indicated in \autoref{complexperm}, complex permutohedral space is naturally a Hopf monoid in set species $\bC\Sigma$. The multiplication corresponds to embedding facets of the permutohedron, and the comultiplication corresponds to projecting the permutohedron onto facets.\footnote{\ Note that this Hopf monoid structure on permutohedral space is dual to the Hopf monoid structure of generalized permutohedra, studied in \cite{aguiar2017hopf}, which is induced by letting generalized permutohedra encode functions/forms on permutohedral space by viewing them as Newton/momentum polytopes. Such Newton polytopes appear in the worldline formalism, or Schwinger parametrization, approach to Feynman amplitudes \cite{schultka2018toric}.} 

A tropical version of the Losev-Manin moduli space is more relevant to this paper. Recall that one can also do algebraic geometry over the tropical rig $\bT=\bR\cup \{- \infty\}$ \cite{mikhalkin2009tropical}. We may view the type $A$ root system, which we denote by $\text{T}^I=\bT\text{T}^I$, as the tropical algebraic torus which is the tropicalization of $\bC\text{T}^I$; $\text{T}^I$ is the moduli space parameterizing the line $\bT^\times=\bR$ with $I$-marked points, 
\[
\text{T}^I=(\bT^\times)^I/\bT^\times  = \bR^I/\bR   =    \bigslant{ \{   \text{functions}\ I\to \bR  \}}\bR 
.\] 
Let \emph{tropical permutohedral space} $\bT\Sigma^I$ be the toric compactification\footnote{\ see e.g. \cite[Chapter 1]{meyer2011intersection} for the construction of tropical toric varieties} of $\text{T}^I$ with respect to the braid arrangement fan. We may interpret $\bT\Sigma^I$ as adding would-be limiting configurations to $\text{T}^I$ where points are separated by infinite distances (or proper times if we view $\bR$ as a worldline). Notice that $\bC\Sigma^I$ has a similar interpretation in terms of points moving infinity far away on a worldsheet. As in the complex case, the compactifications $\bT\Sigma^I$ form a Hopf monoid in set species $\bT\Sigma$. The Hopf monoid of set compositions $\Sigma$ is recovered by restricting the Hopf monoid $\bT\Sigma$ to $\bB$-points, where $\bB=\{-\infty,0 \}\subset \bT$ is the Boolean tropical rig. Moreover, the geometric realization of $\Sig^\ast$ as functions on the $\text{T}^I$ may be extended by continuity to functions on the compactifications $\bT\Sigma^I$, and the commutative Hopf algebraic structure of $\Sig^\ast$ on these functions is exactly that which is induced by the cocommutative Hopf monoid structure of the underlying space $\bT\Sigma$, see \autoref{fig:multcomult}. In particular, the comultiplication of $\Sig^\ast$, which is deconcatenation of compositions, is the restriction of functions to facets of the permutohedron. 

In this paper, we will use this geometric interpretation only to draw pictures. We hope to describe applications of these structures in future work.  

\subsection*{Structure} 

This paper has five sections. In \autoref{comb}, we describe combinatorial gadgets which index aspects of the type $A$ hyperplane arrangements. In particular, we introduce adjoint families, which generalize preposets and maximal unbalanced families, and index cones of the adjoint braid arrangement. In \autoref{alg}, we define the algebras in species which feature in this paper. We construct several bases of the indecomposable quotient of $\Sig^\ast$. In \autoref{georel}, we describe the two geometric realizations of $\Sig^\ast$. In \autoref{main}, we prove our main results. We show that the indecomposable quotient of $\Sig^\ast$ naturally lives on the chambers of the adjoint braid arrangement, has cobracket discrete differentiation across hyperplanes, and is characterized by the Steinmann relations. In \autoref{QFT}, we describe the connection with perturbative QFT, and show that generalized retarded products/functions correspond to the primitive part Lie algebra of $\Sig$. 

\subsection*{Acknowledgments} 

We would like to thank Penn State university for their support. We would also like to thank Nick Early for helpful discussions, and an anonymous referee for useful suggestions. 

\section{Combinatorial Background} \label{comb}

We briefly recall species, and Hopf monoids in species. We recall the combinatorial gadgets: set partitions, set compositions, preposets, and maximal unbalanced families. We introduce adjoint families, which simultaneously generalize preposets and maximal unbalanced families. 

We shall model each preposet $p$ of a finite set $I$ as the collection of all order preserving functions $\textbf{2}\hookrightarrow I$, where $\textbf{2}$ is the ordinal with two elements. This defines the cospecies of preposets $\text{O}$, which has subcospecies corresponding to partitions $\Pi^\ast$, linear orders $\text{L}^\ast$, and set compositions $\Sigma^\ast$. We then construct their adjoint analogs $\Pi^\vee$, $\text{L}^\vee$, $\Sigma^\vee$, $\text{O}^\vee$, which are species whose elements are certain collections of functions $I\twoheadrightarrow \textbf{2}$. The species of maximal unbalanced families is $\text{L}^\vee$, and the species of adjoint families is $\text{O}^\vee$. A preposet on $I$ is naturally an adjoint family if it is modeled as the collection of all order preserving functions $I\twoheadrightarrow \textbf{2}$, which gives an embedding of preposets into adjoint families $\text{O}\hookrightarrow \text{O}^\vee$. 

We adopt the following terminology for real hyperplane arrangements; we refer to cones which are generated by vectors contained in one-dimensional flats as \emph{conical subspaces} of the arrangement, and we let \emph{flats}/\emph{chambers}/\emph{faces}/\emph{cones} refer to pointwise products of characteristic functions of open or closed halfspaces of the arrangement (see \autoref{georel} for exact definitions). An exception is we let `permutohedral cone' refer to permutohedral conical subspaces, see below. The correspondence between our combinatorial gadgets and characteristic functions on the type $A$ hyperplane arrangements will be as follows.
\bgroup
\def\arraystretch{1.2}%
\begin{table}[H] 
\begin{tabular}{|c|c|c|c|c|}
\hline
  &  flats& chambers & faces & cones \\ \hline
braid arrangement  & $\Pi^\ast$    &  $\text{L}^\ast$ & $\Sigma^\ast$ &  $\text{O}$   \\ \hline
adjoint braid arrangement  &$\Pi^\vee$    &    $\text{L}^\vee$ &  $\Sigma^\vee$ &  $\text{O}^\vee$ \\ \hline
\end{tabular}
\end{table}
\egroup
\noindent Then, via the well-known dual cone construction, there are one-to-one correspondences between characteristic functions on the braid arrangement and conical subspaces of the adjoint braid arrangement \emph{which are generated by coroots}. This restriction to aspects of the adjoint braid arrangement which can be `seen' by coroots stops it being so pathological.
\bgroup
\def\arraystretch{1.2}  
\begin{table}[H] 
\begin{tabular}{|c|c|c|c|c|}
\hline
  &   \begin{tabular}{@{}c@{}}semisimple\\ subspaces\end{tabular} & \begin{tabular}{@{}c@{}}pointed\\  permutohedral cones\end{tabular} & \begin{tabular}{@{}c@{}}permutohedral\\ cones\end{tabular} & \begin{tabular}{@{}c@{}}generalized\\  permutohedral cones\end{tabular}  \\ \hline
 \begin{tabular}{@{}c@{}}adjoint braid\\ arrangement \end{tabular}  &$\Pi^\ast$    &    $\text{L}^\ast$ &  $\Sigma^\ast$ &  $\text{O}$ \\ \hline
\end{tabular}
\end{table}
\egroup
\noindent A \emph{semisimple subspace} is a linear subspace which is spanned by coroots. These subspaces feature in the study of reflection length in the affine Weyl group $\wt{A}_{n-1}$ \cite{MR3917218}. A generalized permutohedral cone is equivalently a conical subspace which is generated by coroots. They are studied in \cite{gutekunst2019root}.  

\subsection{Hopf Monoids in Species}
 
Our references for Hopf monoids in species are \cite{aguiar2010monoidal}, \cite{aguiar2013hopf}. See also \cite[Section 2]{aguiar2017hopf} for a quick introduction.  

Let $\textsf{Set}$ denote the cartesian category of sets $\cX\in \textsf{Set}$, and let $\textsf{Vec}$ denote the tensor category of vector spaces over a field $\Bbbk$ of characteristic zero. Let $\sS$ denote the monoidal category with objects finite sets  $I\in \sS$, morphisms bijective functions $\sigma:J\to I$, and monoidal product the restriction of the disjoint union of sets to finite sets. 

We define a \emph{set species} $\text{p}$ to be a presheaf on $\sS$, that is, any functor of the form
\[         
\text{p}:   \sS^{\text{op} } \to  \textsf{Set}
, \qquad   
I\mapsto  \text{p}[I]
.\]
Explicitly, to every finite $I$ we assign a set $\text{p}[I]$, and to every bijection of finite sets $\sigma:J\to I$ we assign a bijection $\text{p}[\sigma]:\text{p}[I]\to \text{p}[J]$ such that the composition of bijections is preserved. Often, $\text{p}[I]$ is the collection of labelings/`probes' of a set of objects by $I$, and $\text{p}[\sigma]:\text{p}[I]\to \text{p}[J]$ sends an $I$-labeling to its precomposition with $\sigma$. This is reminiscent of functorial geometry.

A \emph{vector cospecies} $\textbf{q}$ is a copresheaf of vector spaces on $\sS$, that is, any functor of the form
\[         
\textbf{q}:   \sS \to \textsf{Vec} , \qquad   I\mapsto  \textbf{q}[I] 
.\]
If $\text{p}$ is a set species, then we denote the vector cospecies of $\Bbbk$-valued functions on $\text{p}$ by $\textbf{p}^\ast$, i.e. given the monoidal functor
\[   \Bbbk^{(-)}:  \textsf{Set}^{ \text{op} } \to  \textsf{Vec}, \qquad \cX \mapsto \Bbbk^\cX:=\{ \text{functions } \cX \to \Bbbk  \},    \]
we let $\textbf{p}^\ast:=\Bbbk^{(-)} \circ \text{p}$. A \emph{vector species} $\textbf{p}$ is a presheaf of vector spaces on $\sS$, 
\[         \textbf{p}:   \sS^{\text{op} }\to \textsf{Vec}  , \qquad   I\mapsto  \textbf{p}[I]   .    \]
If $\text{p}$ is a set species, then we define an associated vector species $\textbf{p}$ by letting $\textbf{p}[I]\subseteq \Hom( \textbf{p}^\ast[I], \Bbbk )$ be the subspace of linear functionals corresponding to formal linear combinations of elements of $\text{p}[I]$, i.e. given the free vector space monoidal functor
\[   \Bbbk(-) :  \textsf{Set} \to  \textsf{Vec}, \qquad \cX\mapsto \Bbbk \cX ,  \]
we may naturally identify $\textbf{p}=\Bbbk(-) \circ \text{p}$. Finally, a \emph{set cospecies} $\text{q}$ is a copresheaf on $\sS$, 
\[         \text{q}:   \sS \to  \textsf{Set}  , \qquad   I\mapsto  \text{q}[I].  \]
If $\text{q}$ is a set cospecies, then we have the associated vector cospecies $\textbf{q}=\Bbbk(-) \circ \text{q}$. A set (resp. vector) (co)species is called \emph{positive} if its value on the empty set $\emptyset$ is the empty set (resp. zero-dimensional vector space), and \emph{connected} if its value on $\emptyset$ is the singleton set (resp. one-dimensional vector space).

Throughout this paper, we define (co)species by explicitly giving the values of functors on finite sets only, with their values on bijections then being induced in an obvious way. 




\begin{remark}
In \cite{aguiar2010monoidal}, \cite{aguiar2013hopf}, Aguiar and Mahajan explicitly consider only copresheaves on $\sS$. However, since the inversion of bijections is a dagger for $\sS$ (it is a groupoid), the general theory of species is a copy of the general theory of cospecies. 
\end{remark}

\begin{figure}[t]
	\centering
	\includegraphics[scale=0.9]{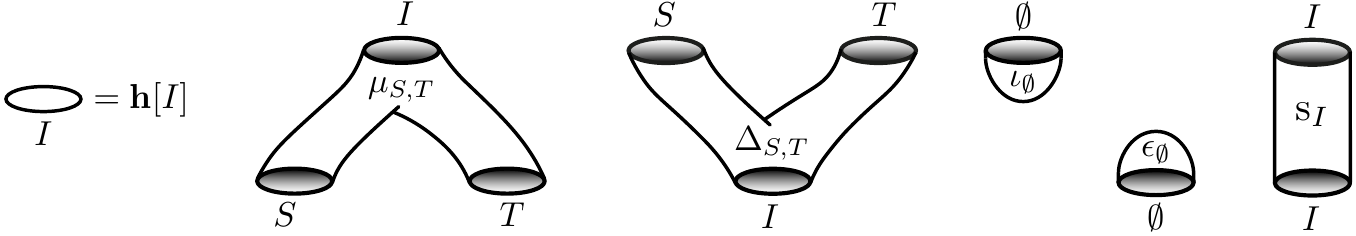}
	\caption{Data of a Hopf algebra in (co)species $\textbf{h}$. The topology here is just serving as notation for string diagrams.}
	\label{fig:data}
\end{figure} 

The functor category of set (resp. vector) (co)species is a symmetric monoidal category when equipped with the Day convolution for the disjoint union of finite sets, denoted $\times_\text{Day}$ (resp. $\otimes_\text{Day}$). Let a \emph{(co/bi/Hopf)algebra in (co)species} be a (co/bi/Hopf)monoid internal to vector (co)species. In \cite[Section 4.2-4.3]{aguiar2013hopf}, the specialized notion of a set-theoretic (co/bi/Hopf)monoid is given, which is required in order to overcome a technicality when internalizing comonoids in set (co)species. Let a \emph{(co/bi/Hopf)monoid in (co)species} be a set-theoretic (co/bi/Hopf)monoid in set (co)species.

Let us make this explicit for the case of a Hopf algebra in (co)species, which is the most relevant to us. A bialgebra in (co)species consists of a vector (co)species $\textbf{h}$ such that for each finite set $I\in \textsf{S}$, and each choice of disjoint subsets $S,T\subseteq I$ with $S\sqcup T=I$,\footnote{\ in particular, we allow $S$ or $T$ to be empty} we have linear maps called the multiplication and comultiplication, respectively
\[   \mu_{S,T}:  \textbf{h}[S]\otimes \textbf{h}[T]\to \textbf{h}[I] \qquad \text{and} \qquad \Delta_{S,T}: \textbf{h}[I]\to \textbf{h}[S]\otimes \textbf{h}[T],  \]
and linear maps called the unit and counit, respectively
\[  \iota_{\emptyset}  :\Bbbk\to  \textbf{h}[\emptyset] \qquad \text{and}  \qquad  \epsilon_{\emptyset}:  \textbf{h}[\emptyset]\to \Bbbk . \]
A Hopf algebra in (co)species must have an additional linear map for each finite set $I\in \textsf{S}$, called the antipode,
\[ \text{s}_I:  \textbf{h}[I]\to  \textbf{h}[I]. \]
See \autoref{fig:data}. All these maps should satisfy the usual bimonoid and Hopf monoid compatibility axioms. The compatibility of multiplication and comultiplication is the most interesting, whose string diagram is shown in \autoref{fig:exchange}. See \cite[Section 8.3]{aguiar2010monoidal} for more details. 

To define a bialgebra structure on a connected (co)species, it suffices to specify $\mu_{S,T}$ and $\Delta_{S,T}$ for $S,T\neq \emptyset$. Moreover, a bialgebra structure on a connected (co)species is necessarily a Hopf algebra, and there are explicit formulas for the antipode.
 
If $\text{p}$ is a cocommutative Hopf monoid in species with finite components $\text{p}[I]$, then $\textbf{p}^{\ast}=\Bbbk^{(-)} \circ \text{p}$ is naturally a commutative Hopf algebra in cospecies, and $\textbf{p}=\Bbbk(-) \circ \text{p}$ is naturally its dual cocommutative Hopf algebra in species \cite[Section 4.5]{aguiar2013hopf}. 

\begin{remark}
We may compare these definitions and constructions with more familiar Hopf monoids. Recall that groups, which are Hopf monoids internal to cartesian categories of spaces, are carried into ordinary commutative Hopf algebras via appropriate modifications of the functor $\Bbbk^{(-)}$ (e.g. smooth functions on a compact Lie group), and are carried into ordinary cocommutative Hopf algebras via appropriate modifications of the functor $\Bbbk(-)$ (e.g. distributions on an algebraic group), see \cite[Section 3]{MR2290769}. Note however that Hopf monoids in species are not groups; groups should be constructed with respect to the Hadamard product $\times_{\text{Had}}$ since this is the cartesian product for set species. In fact, set species trivially form a presheaf topos, and so groups in set species are equivalently presheaves of groups $\textsf{S}^{\text{op}}\to \textsf{Grp}$, an example from this paper being $I\mapsto \text{PGL}_I(\bC)$.
\end{remark}

\begin{figure}[t]
	\centering
	\includegraphics[scale=0.9]{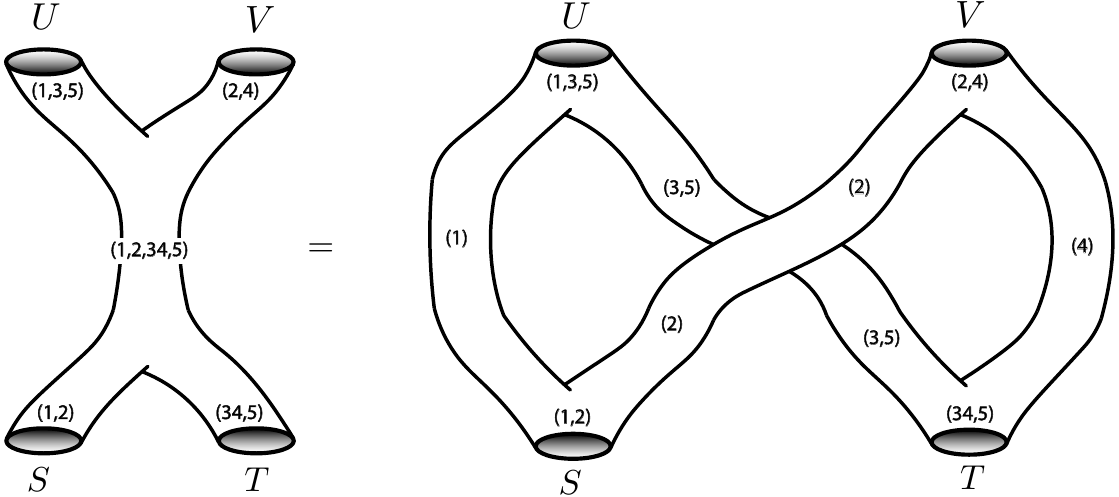}
	\caption{The string diagram for the compatibility of multiplication and comultiplication, where $I=S\sqcup T=U\sqcup V$. One can check compatibility by imagining passing elements of the Hopf monoid through the surfaces. An example for the Hopf monoid of set compositions $\Sigma$ (see \autoref{comp}) is shown, where $S=\{1,2\}$, $T=\{3,4,5\}$, $U=\{1,3,5\}$, $V=\{2,4\}$.}
	\label{fig:exchange}
\end{figure}


\begin{remark}
Set species, or more generally presheaves on $\textsf{S}$ valued in a cartesian category of spaces $\textsf{Spa}$, appear naturally as moduli spaces of spaces with marked points. This is analogous to the construction of singular simplicial complexes in homology, where instead of probing a topological space $\cX\in \textsf{Top}$ with simplices by composing the standard cosimplicial topological space\footnote{\ recall that a (co)simplicial topological space is a $\textsf{Top}$-valued (co)presheaf on the simplex category $\Delta$}
\[\Delta\to \textsf{Top} \]
with $\Hom_{\textsf{Top}}(-,\cX)$, one probes a space $\cX\in \textsf{Spa}$ with finite sets by composing the `standard' cospecies of finite discrete spaces 
\[\textsf{S}\to \textsf{Spa}\]
with $\Hom_{\textsf{Spa}}(-,\cX)$. Such a set species will automatically be a set-theoretic comonoid by restricting configurations of points (`forgetting marked points'), which is a special case of Schmitt's comonoid construction \cite[Section 8.7.8]{aguiar2010monoidal}. Dually, we may obtain cospecies by composing the cospecies of finite codiscrete spaces $\textsf{S}\to \textsf{Spa}$ with $\Hom_{\textsf{Spa}}(\cX,-)$.
\end{remark}

\subsection{Set Compositions and Set Partitions} \label{comp}
We now describe some primordial examples of set species, which, according to the philosophy of Rota's twelvefold way, should be defined as certain probes of ordinals and cardinals by finite sets. This approach also enables us to immediately suggest a generalization of the construction in \autoref{complexperm}.

Let us model finite ordinals by ordered tuples of integers
\[   
(k):= \{1,\dots,k\}, \qquad k\in \bN
\]
where the ordering is $1>\cdots>k$. Let $I\in \sS$ be a finite set with cardinality $n\in \bN$. Let $\widehat{\Sigma}[I]$ be the set of finite ordinals with $I$-marked points, that is
\[  
\widehat{\Sigma}[I]:= \big \{  \text{functions}\ I \xrightarrow{F} (k) : k\in \bN  \big \} =\bigsqcup_{k\in \bN} (k)^I 
.\] 
This defines the species of (set) \emph{decompositions} $\widehat{\Sigma}$. We can encode decompositions $F\in \widehat{\Sigma}[I]$ in formal expressions
\[  
F=  (S_1, \dots, S_k) , \qquad \text{where}\ \ S_j:= F^{-1}(j)
.\]
The commas in the expression indicate that the order does matter. The $S_j$ are called the \emph{lumps} of $F$. We let $l(F)=k$ denote the number of lumps of $F$. The \emph{opposite} of $F$ is defined by 
\[
\bar{F}:=(S_k,\dots, S_1), \qquad  \text{i.e.} \quad \bar{F}^{-1}(j)=F^{-1}(k+1-j)
.\] 
We have the following subsets of $\widehat{\Sigma}[I]$,
\[  
\Sigma[I]:= \big \{  \text{surjective functions}\ I \xrightarrow{F} (k) : k\in \bN  \big \}  
\]
and
\[  
\text{L}[I]:= \big \{  \text{bijective functions}\ I \xrightarrow{F} (k) : k\in \bN  \big \}  
.\]
This defines the species of (set) \emph{compositions} $\Sigma$ and \emph{linear orders} $\text{L}$. 
For $F$ a decomposition, let $F_+$ denote the composition whose formal expression is obtained from the formal expression of $F$ by deleting all copies of the empty set. This defines a retraction of species,
\[     
(-)_+:  \widehat{\Sigma} \twoheadrightarrow \Sigma, \qquad F\mapsto F_+
.\] 
Notice that $\widehat{\Sigma}$ is immediately a set-theoretic comonoid via Schmitt's comonoid construction, with the comultiplication given by restricting configurations of points. We can define a multiplication on $\widehat{\Sigma}$ by concatenating formal expressions for decompositions. More conceptually, this uses the fact that unmarked ordinals $\widehat{\Sigma}[\emptyset]$ form the additive monoid $(\bN,+)$ via ordinal sum,\footnote{\ the multiplication of $\bN$ is recovered as the Tits product \cite[Section 1.11]{aguiar2013hopf}}
\[ 
\widehat{\Sigma}[\emptyset]\times \widehat{\Sigma}[\emptyset]\to \widehat{\Sigma}[\emptyset]
,\qquad      
(k_1,k_2)\mapsto   (k_1)+(k_2):=(k_1+k_2)
.\] 
We can then define the map
\[   \iota_{k_1,k_2}:  (k_1)\sqcup (k_2)\to (k_1+k_2), \qquad    \iota_{k_1,k_2}(j):= \begin{cases}
j &\quad  \text{if}\quad   j\in (k_1)     \\
 j+k_1   &\quad \text{if}\quad   j\in (k_2)
\end{cases}    \] 
which allows us to glue configurations; for $F:S\to (k_1)$ and $G:T\to (k_2)$, the \emph{concatenation} of $F$ with $G$ is given by
\[   FG:I\to (k_1+k_2), \qquad     FG:=  \iota_{k_1,k_2} \circ (F\sqcup G) . \]
For $S\subseteq I$ and $F:I\to (k)$, let $F|_S$ denote the restriction of $F$ to $S$, 
\[         F|_S:  S\to (k) , \qquad   F|_S:= F \circ  (S \hookrightarrow I)   . \]
Then $\widehat{\Sigma}$ is a bimonoid in species, with multiplication concatenation and comultiplication restriction,
\[         \mu_{S,T} ( F, G ): =  FG \qquad \text{and} \qquad  \Delta_{S,T} (F) :=  (F|_S,  F|_T)    .     \]
This induces the structure of a Hopf monoid on $\Sigma$ (considered as a quotient of $\widehat{\Sigma}$), which restricts to the structure of a Hopf monoid on $\text{L}$. Explicitly, the multiplication is concatenation, and the comultiplication is restriction composed with $(-)_+$,
\[         \mu_{S,T} ( F_+, G_+ ): =  F_+ G_+ \qquad \text{and} \qquad  \Delta_{S,T} (F_+) :=  \big (  (F_+ |_S)_+,  (F_+ |_T)_+ \big )    .     \]
Note that compositions and linear orders are not closed under restriction. From now on, if $F$ is a composition, we let $F|_S:=(F|_S)_+$.

\begin{remark}
Let us briefly mention the connection to more familiar decategorified structures. Recall the bosonic Fock functor $\overline{\cK}_{V}(-)$ from the \hyperref[intro]{Introduction}, which preserves Hopf monoids. We have that $\overline{\cK}_V(\textbf{L})$ is the tensor algebra $\cT(V)$ on $V$, and $\overline{\cK}(\Sig)=\overline{\cK}_\Bbbk(\Sig)$ is the algebra of noncommutative symmetric functions $\textbf{NSym}$. We describe $\Sig$ in detail in \autoref{sec:thehopfalgebras}.
\end{remark}

Let us model finite cardinals by sets of integers,
\[[k]:=\{ 1 ,\dots, k\},  \qquad k\in \bN.\] 
The automorphism group of $[k]$ is the symmetric group $\text{Sym}_k$ of $\{ 1 ,\dots, k\}$, whereas $(k)$ did not have any automorphisms. Let $\widehat{\Pi}[I]$ be the set of finite cardinals with $I$-marked points, taken modulo automorphisms,
\[  \widehat{\Pi}[I]:= \Big \{  \bigslant{\text{functions}\ I \xrightarrow{P} [k]}{\text{Sym}_k}: k\in \bN \Big \}    . \] 
This defines the set species of \emph{departitions} $\widehat{\Pi}$. We can encode departitions $P\in \widehat{\Pi}[I]$ in formal expressions
\[ P=  (S_1| \dots| S_k) , \qquad \text{where}\ \ S_j:= P^{-1}(j).      \]
The bars in the expression indicate that the order does not matter. The $S_j$ are called the \emph{blocks} of $P$. We have the subsets
\[  \Pi[I]:= \Big \{  \bigslant{\text{surjective functions}\ I \xrightarrow{P} [k]}{\text{Sym}_k}: k\in \bN \Big \}    \]
and
 \[  \text{E}[I]:= \Big \{  \bigslant{\text{bijective functions}\ I \xrightarrow{P} [k]}{\text{Sym}_k}: k\in \bN \Big \}.  \]
This defines the set species of \emph{partitions} $\Pi$, and the \emph{exponential species} $\text{E}$. Notice that each component of the exponential species has cardinality one. The map 
\[  
Q_{(-)}: \widehat{\Sigma}\twoheadrightarrow \widehat{\Pi}, \qquad  (S_1,\dots, S_k)\mapsto  (S_1|\dots| S_k)   
\]
which quotients by the $\text{Sym}_k$ actions induces the structure of a bimonoid on $\widehat{\Pi}$. Then $\Pi$ and $\text{E}$ inherit the structure of Hopf monoids from $\widehat{\Pi}$. We summarize all these bimonoids in the following commutative diagram of homomorphisms.
\begin{center}
\begin{tikzcd}[column sep=large,row sep=large] 
  \widehat{\Sigma} 	\arrow[d, twoheadrightarrow, "\text{\textcolor{white}{mod cardinal symmetry}}"']  	\arrow[r, twoheadrightarrow, "\text{mod $\emptyset$}"]    & \Sigma	\arrow[d, twoheadrightarrow]  \arrow[r, hookleftarrow, "\text{bijective}", "\text{probes}"']  	&	\arrow[d, twoheadrightarrow, "\text{mod cardinal symmetry}"] 	\text{L} 	\\
     \widehat{\Pi}    	\arrow[r, twoheadrightarrow]  &  \Pi 		 \arrow[r, hookleftarrow] &  	\text{E} 
\end{tikzcd}
\end{center}
\noindent This diagram is fundamental in the recent work of Aguiar and Mahajan, who use it as a pivot towards generalizations of Hopf monoids in species \cite{aguiar2020bimonoids}.  

\begin{remark}
We have that $\overline{\cK}_V(\textbf{E})$ is the symmetric algebra $\cS(V)$ on $V$, and $\overline{\cK}(\boldsymbol{\Pi})=\overline{\cK}_\Bbbk(\boldsymbol{\Pi})$ is the algebra of symmetric functions $\textbf{Sym}$.
\end{remark}

For partitions $P,Q\in \Pi[I]$, if each block of $Q$ is a subset of a block of $P$, we write 
\[
P\leq Q
.\] 
That is we can obtain $P$ from $Q$ by merging blocks. For compositions $F,G\in \Sigma[I]$, we write   
\[
G\leq F
\]
if we can obtain $G$ from $F$ by merging contiguous lumps. Given compositions $G\leq F$ with $G=(S_1, \dots, S_k)$, let
\[l(F/G):=\prod^{k}_{j=1} l( F|_{S_j} )     \qquad \text{and} \qquad  (F/G)!:=\prod^{k}_{j=1} l( F|_{S_j} )!\,    . \]

\begin{remark}\label{complexperm}
In order to obtain the Hopf monoid structure on complex permutohedral space $\bC\Sigma$, mentioned during the discussion of the Losev-Manin moduli space in the \hyperref[intro]{Introduction}, one should now repeat this construction with the ordinals $(k)$ replaced by strings of $k$-many Riemann spheres $\bC\text{P}^1$, glued at the poles as in \cite[Section 3]{batyrevblume10}. 
\end{remark}

\subsection{Preposets} \label{preposet}
We describe one of the most important examples of a cospecies, which we present in terms of codiscrete categories on finite sets. For related examples of cospecies, see \cite[Chapter 13]{aguiar2010monoidal}, \cite{aguiar2017hopf}. For the relationship with the braid arrangement, see \cite[Section 13.5]{aguiar2010monoidal}.

Let $\textsf{cat}$ denote the category of small categories.\footnote{\ in particular, categories $\cC\in \textsf{cat}$ have the structure of a set on their preset of objects} For $\cC\in \textsf{cat}$ and objects $x,y\in \cC$, we write $x\geq y$ if there is a morphism $y\rightarrow x$.\footnote{\ we may think of $y\rightarrow x$ as the arrow of time, then $x\geq y$ corresponds to the time coordinate of $x$ being greater than the time coordinate of $y$} The \emph{interval category} $\textbf{2}\in \textsf{cat}$ has set of objects $\{1,2\}$, with a single non-identity morphism $2\rightarrow  1$. Given a finite set $I\in \sS$, the \emph{codiscrete category} $\text{Codisc}[I]\in \textsf{cat}$ has set of objects $I$, with exactly one morphism $i_2\rightarrow i_1$ for all $i_1,i_2\in I$. We define the set cospecies $[\textbf{2};-]$ by
\[ 
[\textbf{2};I]:= \big\{\text{injective functions }\textbf{2}\hookrightarrow I \big\}  \subset \Hom_{\textsf{cat}}\big(\textbf{2},\text{Codisc}[I]\big ) 
.\]
Then $[\textbf{2};I]$ is in natural bijection with the set of non-identity morphisms of $\text{Codisc}[I]$. For $i_1,i_2\in I$ with $i_1\neq i_2$, let $(i_1,i_2)\in [\textbf{2};I]$ correspond to the morphism $i_2\to i_1$, that is
\[
(i_1,i_2)(1):=i_1  
\qquad \text{and} \qquad  
(i_1,i_2)(2):=i_2
.\] 
Then morphism composition in $\text{Codisc}[I]$ induces the following partially defined product on $[\textbf{2};I]$,
\[
(i_1,i_2)\circ (i_3,i_4):=  
\begin{cases}
  (i_1,i_4)  &\quad  \text{if}\ i_2=i_3\ \text{and} \ i_1\neq i_4        \\
 (i_3,i_2)   &\quad \text{if}\  i_1=i_4\ \text{and} \ i_2\neq i_3 \\
\text{undefined}   &\quad \text{otherwise}.
\end{cases}
\]
Recall that (co)roots of $\text{SL}_I(\bC)$ are indexed by elements of $[\textbf{2};I]$. This partial product corresponds to the addition of (co)roots, see \autoref{sec:root}. Underlying this is the fact that $\text{SL}_I(\bC)$ is a group of units in the convolution algebra of $\text{Codisc}[I]$.   

A \emph{preposet} $p$ of $I$ is a reflexive and transitive relation $\geq_p$ on $I$, or equivalently a \hbox{surjective-on-objects} subcategory of $\text{Codisc}[I]$. In this paper, we identify $p$ with its `injective $\textbf{2}$-probes', 
\[            
p= [\textbf{2};p] :=  \big \{ (i_1,i_2)\in [\textbf{2};I]:  i_1\geq_p i_2   \big  \}\subset  \Hom_{\textsf{cat}  }\big ( \textbf{2}, p \big )
.   \]
Thus, we identify $p$ with its set of non-identity morphisms. This identifies preposets with subsets of $[\textbf{2};I]$ which are closed under the partial product, and hence also with subsets of coroots which are closed under addition.

An element $(i_1,i_2)\in p$ is called \emph{symmetric} if $(i_2,i_1)\in p$. We let $p_>$ denote the set nonsymmetric elements of $p$. The \emph{opposite} $\bar{p}$ of $p$ is defined by 
\[(i_1,i_2)\in \bar{p} \qquad  \text{if and only if} \qquad (i_2,i_1)\in p.\] 
The intersection of two preposets of $I$ is a preposet of $I$. For $X\subseteq [\textbf{2};I]$ any subset, the \emph{transitive closure} $\text{Cl}(X)$ of $X$ is the preposet of $I$ which is the intersection of all the preposets of $I$ which contain $X$. For $p$ and $q$ preposets of $I$, let $p\cup q$ denote the preposet of $I$ which is the transitive closure of the set union of $p$ and $q$. 

The \emph{blocks} of a preposet $p$ are the equivalence classes of the transitive and symmetric closure of the relation
\[i_1 \sim i_2\qquad \iff \qquad  (i_1,i_2)\in p \quad \text{or}\quad (i_2,i_1)\in p .\] 
The \emph{lumps} of $p$ are the equivalence classes of the equivalence relation
\[
i_1 \sim i_2\qquad \iff \qquad  (i_1,i_2)\in p \quad \text{and}\quad (i_2,i_1)\in p 
.\] 
Let $l(p)$ denote the number of lumps of $p$. For $p$ and $q$ preposets of $I$, we write $q\leq  p$ if $p \subseteq q$. We write $q \preceq    p$ if both $q\leq p$ and $p_>\subseteq q_>$, and we write $q \preceq_{l} p$ if both $q \preceq   p$ and $l(q)=l(p)$. For $p$ a preposet of $I$ and $S\subseteq I$, the \emph{restriction} $p|_S$ is the preposet of $S$ which is given by 
\[  
(i_1,i_2)\in  p|_S  \qquad  \text{if and only if} \qquad(i_1,i_2)\in p \quad \text{for all}\quad (i_1,i_2)\in [\textbf{2};S]
.            \]
Let
\[      \text{O}[I]:= \big\{  p: \text{$p$ is a preposet of $I$}    \big \} .             \]
This defines the cospecies of preposets $\text{O}$. We say a preposet is \emph{total} if for all $(i_1,i_2)\in [\textbf{2};I]$, at least one of $(i_1,i_2)\in p$ and $(i_2,i_1)\in p$ is true. Let $\Sigma^\ast$ denote the cospecies of total preposets, 
\[   \Sigma^\ast[I]  := \big \{ p\in \text{O}[I] : \text{$p$ is total}    \big  \} . \]
We say a preposet is \emph{totally-nonsymmetric} if for all $(i_1,i_2)\in [\textbf{2};I]$, exactly one of $(i_1,i_2)\in p$ and $(i_2,i_1)\in p$ is true. Let $\text{L}^\ast$ denote the cospecies of totally-nonsymmetric preposets, 
\[ \text{L}^\ast[I]  := \big \{ p\in \text{O}[I]: \text{$p$ is totally-nonsymmetric}     \big  \}  .     \] 
Let $\Pi^\ast$ denote the cospecies of preposets without nonsymmetric elements,
\[          \Pi^\ast[I]:=      \big \{ p\in \text{O}[I]   :  p_{>}=\emptyset \big  \}= \big \{ p\in \text{O}[I]   :  \bar{p}=p \big  \}   .     \]
The elements of the cospecies $\Sigma^\ast$, $\text{L}^\ast$, and $\Pi^\ast$ are in one-to-one correspondence with set compositions, linear orders, and set partitions respectively. Explicitly, given a partition $P=(S_1|\dots|S_k)$ of $I$, we let $P\in \Pi^\ast[I]$ denote the encoding of $P$ as the collection of $(i_1,i_2)\in [\textbf{2};I]$ such that $\{i_1,i_2\}\subseteq S_j$ for some $S_j\in P$; and given a composition $F=(S_1,\dots,S_k)$ of $I$, we let $F\in \Sigma^\ast[I]$ denote the encoding of $F$ as the collection of $(i_1,i_2)\in [\textbf{2};I]$ such that the lump of $F=(S_1,\dots,S_k)$ containing $i_1$ is to the left of, or is equal to, the lump containing $i_2$. 

The context will make it clear whether we mean the covariant or contravariant version of compositions and partitions. For example, given a two-lump composition $(S,T)$, we might write `$(S,T)\leq p$', which is only defined if we take $(S,T)\in \Sigma^\ast[I]\subseteq \text{O}[I]$. Notice this says that $S$ (resp. $T$) is an upward (resp. downward) closed subset of $p$.

\begin{remark}
Note that $\text{O}$ indexes cones of the braid arrangement, $\Sigma^\ast$ indexes faces, $\text{L}^\ast$ indexes chambers, and $\Pi^\ast$ indexes flats \cite[Section 3]{vic}, \cite[Section 13.5.1]{aguiar2010monoidal}. See \autoref{braid} for details. 
\end{remark}

\subsection{Adjoint Families} \label{ref:adfam}
Let us now describe the adjoint analogs of preposets, which we call \mbox{(pre-)adjoint} families. Similar combinatorial gadgets were considered by Epstein-Glaser-Stora \cite{epstein1976general}. We define the set species $[-;\textbf{2}]$ by
\[ 
[I;\textbf{2}]:=\big\{\text{surjective functions } I\twoheadrightarrow \textbf{2}\big\}\subset \Hom_{\textsf{cat}  }\big ( \text{Disc}[I], \textbf{2} \big )
.\] 
If we view $\textbf{2}$ as the ordinal with two elements, then $[I;\textbf{2}]$ is equivalently the set of compositions of $I$ with two lumps. Explicitly, for $S\sqcup T=I$ with $S,T\neq \emptyset$, define $(S,T)\in [I;\textbf{2}]$ by
\[(S,T)(i)=1\quad\   \text{if }\  i\in S \qquad \text{and} \qquad  (S,T)(i)=2\quad\   \text{if }\  i\in T.\]
Define the following partial product on $[I;\textbf{2}]$,
\[
(S,T)\circ (U,V):=  
\begin{cases}
   (S\cup U,T\cap V)   &\quad  \text{if}\ T\supset U      \\
\big (S\cap U,T\cup V  \big)    &\quad \text{if}\  S\supset V \\
\text{undefined}   &\quad \text{otherwise}.
\end{cases}
\]
Recall that fundamental weights of $\text{SL}_I(\bC)$ are indexed by elements of $[I;\textbf{2}]$. This partial product corresponds to the addition of fundamental weights, see \autoref{sec:root}. 

Let a \emph{pre-adjoint family} $\cF$ of $I$ be a subset of $[I;\textbf{2}]$ which is closed under this partial product, i.e. for all $(S,T),(U,V)\in \cF$, we have either 
\[
(S,T)\circ (U,V) \in \cF \qquad \text{or}\qquad (S,T)\circ(U,V)\quad \text{is undefined.}
\] 
Let an \emph{adjoint family} $\cF$ of $I$ be a subset of $[I;\textbf{2}]$ whose corresponding set of fundamental weights of $\text{SL}_I(\bC)$ is closed under taking non-negative linear combinations. The intersection of two adjoint families of $I$ is an adjoint family of $I$. For $X\subseteq [I;\textbf{2}]$ any subset, the \emph{closure} $\text{Cl}(X)$ of $X$ is the adjoint family of $I$ which is the intersection of all the adjoint families of $I$ which contain $X$. 

Given a preposet $p\in \text{O}[I]$, we obtain an associated subset $\cF_p\subseteq [I;\textbf{2}]$ by taking the `surjective $\textbf{2}$-coprobes' of $p$,
\[\cF_p=  [p;\textbf{2}] := \big\{ (S,T)\in[I;\textbf{2}]: (S,T)\leq p    \big\} \subset  \Hom_{\textsf{cat}  }\big (p, \textbf{2}\big) . \] 
This unforgetfully encodes $p$ in terms of its upward/downward closed subsets. (We have $\text{Cl}(\cF_p)=\cF_p$ by \autoref{mainthm1}, and so $\cF_p$ is in fact an adjoint family.) If $F\in \Sigma^\ast[I]$, then $\cF_F$ recovers the usual way of modeling set compositions as flags of subsets.  

\begin{remark} \label{marcello}
In general, the set of pre-adjoint families of $I$ does not coincide with the set of adjoint families of $I$. Indeed, there exists a nontrivial pre-adjoint family $\cF$, which is additionally totally-nonsymmetric in the sense defined below, with $\text{Cl}(\cF)=[I;\textbf{2}]$ \cite[p.92-93]{epstein1976general}.\footnote{\ We thank Marcelo Aguiar for making us aware of this result.}
\end{remark}

An element $(S,T)\in \cF$ is called \emph{symmetric} if $(T,S)\in \cF$.\footnote{\ a pre-adjoint family without symmetric elements is (equivalent to) a \emph{paracell} in the sense of Epstein-Glaser-Stora \cite[Definition 1]{epstein1976general}, \cite[Definition 2.3]{epstein2016}} We let $\cF_>$ denote the set of nonsymmetric elements of $\cF$. The \emph{opposite} $\bar{\cF}$ of $\cF$ is defined by 
\[(S,T)\in \bar{\cF} \qquad  \text{if and only if} \qquad (T,S)\in \cF.\] 
Then $\bar{\cF}_p=\cF_{\bar{p}}$. For $\cF$ and $\cG$ pre-adjoint families of $I$, we write $\cG \leq  \cF$ if $\cF \subseteq \cG$. We have $q\leq p\iff \cF_p\leq \cF_q$. 


Let
\[      \text{O}^\vee[I]:= \big\{  \cF: \text{$\cF$ is an adjoint family of $I$}    \big \} .             \]
This defines the species of adjoint families $\text{O}^\vee$. An adjoint family $\cF$ is \emph{total} if for all $(S,T)\in [I;\textbf{2}]$, at least one of $(S,T)\in \cF$ and $(T,S)\in \cF$ is true. We denote total adjoint families by $\cS$. Let $\Sigma^\vee$ denote the species of total adjoint families, 
\[     
 \Sigma^\vee[I] := \big \{ \cF\in \text{O}^\vee[I]   :\text{$\cF$ is total} \big  \}
. \]
An adjoint family $\cF$ is \emph{totally-nonsymmetric} if for all $(S,T)\in [I;\textbf{2}]$, exactly one of $(S,T)\in \cF$ and $(T,S)\in \cF$ is true. Totally-nonsymmetric adjoint families are (equivalent to) maximal unbalanced families \cite{billera2012maximal}, positive sum systems \cite{MR3467341}, and cells\cite[Definition 6]{epstein1976general}, \cite[Definition 2.5]{epstein2016}. The number of maximal unbalanced families is sequence \href{https://oeis.org/A034997}{A034997} in the OEIS. Let $\text{L}^\vee$ denote the species of totally-nonsymmetric adjoint families, 
\[       \text{L}^\vee[I] := \big \{ \cF\in \text{O}^\vee[I] :\text{$\cF$ is totally-nonsymmetric}    \big  \}.  \] 
Let $\Pi^\vee$ denote the species of adjoint families without nonsymmetric elements, 
\[          \Pi^\vee[I]:=      \big \{ \cF\in \text{O}^\vee[I]   :  \cF_{>}=\emptyset \big  \}= \big \{ \cF\in \text{O}^\vee[I]   :  \bar{\cF}=\cF \big  \}.     \]

\begin{remark}
Since taking dual cones intertwines intersections with Minkowski sums, $\text{O}^\vee$ indexes cones of the adjoint braid arrangement, $\Sigma^\vee$ indexes faces, $\text{L}^\vee$ indexes chambers, and $\Pi^\vee$ indexes flats. See \autoref{adjoint} for details. 
\end{remark}

\section{Algebraic Structures} \label{alg}
\noindent We define the various algebras in (co)species which feature in this paper. Our main references are \cite{aguiar2010monoidal}, \cite{aguiar2013hopf}. We also prove some additional results which we shall need.

\subsection{The Commutative Hopf Algebra of Preposets} 
We now define the commutative Hopf algebra of preposets, following \cite[Section 13.1.6]{aguiar2010monoidal}. Let 
\[   \textbf{O}[I]:= \Bbbk \text{O}[I] =\{ \text{formal $\Bbbk$-linear combinations of preposets over $I$}\}  .   \]
This defines the vector cospecies $\textbf{O}$. Given $p\in \text{O}[I]$, we also denote by $p$ the corresponding basis element of $\bO[I]$. For $(S,T)\in [I;\textbf{2}]$, $p\in \text{O}[S]$ and $q\in \text{O}[T]$, let 
\[(p\, |\, q)\in \text{O}[I]\] 
denote the (disjoint) set union of $p$ and $q$. For $(S,T)\in [I;\textbf{2}]$ and $p\in \text{O}[I]$, we have (generalized) \emph{deconcatenation}, 
\[
p\talloblong_S:=  
\begin{cases}
p|_S &\quad  \text{if $(S,T)\leq p$}\\
0 &\quad \text{otherwise}
\end{cases}\qquad \text{and} \qquad 
p\! \fatslash_{\, T}:=  
\begin{cases}
p|_T &\quad \text{if $(S,T)\leq p$}\\
0 &\quad \text{otherwise.}
\end{cases}
\]
The vector cospecies $\bO$ has the structure of a commutative bialgebra, with multiplication union and comultiplication deconcatenation,
\[  \mu_{S,T} (  p \otimes  q ): =  (p\, |\, q) \qquad \text{and} \qquad    \Delta_{S,T}  ( p) :=   p\! \talloblong_S \otimes \,  p\! \!  \fatslash_{\, T}  .  \]
The unit and counit are induced by identifying $1\in \Bbbk$ with the unique preposet on the empty set. Since $\textbf{O}$ is then a connected bialgebra, it is a Hopf algebra. The antipode may be given by Takeuchi's antipode formula \cite[Proposition 8.13]{aguiar2010monoidal}. 

\begin{remark}
The Hopf algebra in cospecies $\textbf{O}$ is particular in that it does not arise as the algebra of functions on a Hopf monoid of preposets.
\end{remark}


\subsection{The Hopf Algebras of Set Compositions} \label{sec:thehopfalgebras}

We now define the Hopf algebras of set compositions $\Sig$ and $\Sig^\ast$, following \cite[Section 11]{aguiar2013hopf}. Let 
\[    \Sig^\ast[I]:= \big \{   \text{functions}\ \Sigma[I]\to \Bbbk  \big \} .        \]
This defines the vector cospecies $\Sig^\ast$. For $F\in \Sigma^\ast[I]$, let $\tM_F\in \Sig^\ast[I]$ be given by $\tM_F(G):=\delta_{FG}$, where $G\in \Sigma[I]$. The set 
\[  \big \{\tM_F: F\in \Sigma^\ast[I]\big \}\] 
is called the \emph{$\tM$-basis}, or \emph{monomial basis}, of $\Sig^\ast[I]$. The cocommutative Hopf monoidal structure of $\Sigma$ then induces a commutative Hopf algebraic structure on $\Sig^\ast$, given in terms of the $\tM$-basis by
\[\mu_{S,T} ( \tM_F\otimes \tM_G ): =   {\displaystyle \sum_{H\preceq (F|G)}} \tM_{ H  }\qquad \text{and} \qquad \Delta_{S,T}  (\tM_F): =  \tM_{F\talloblong_S} \otimes\,     \tM_{F\! \fatslash_{\, T}}.\footnote{\ we let $\tM_0:=0$}\] 
The antipode is given by
\[  \text{s}_I(\tM_F):=  (-1)^{l(F)} \sum_{G\leq \bar{F}}  \tM_G. \]
Let
\[   \Sig[I] := \Hom\big(\Sig^\ast[I], \Bbbk  \big )   .    \]
This defines the vector species $\Sig$. For $F\in \Sigma[I]$, let $\tH_F\in \Sig[I]$ be given by $\tH_F( \tM_G ):=\delta_{FG}$, where $G\in \Sigma^\ast[I]$. The set 
\[\big \{\tH_F: F\in \Sigma[I] \big \}\] 
is called the \emph{$\tH$-basis} of $\Sig[I]$. The cocommutative Hopf algebraic structure on $\Sig$ induced by linearizing the cocommutative Hopf monoidal structure of $\Sigma$ is given in terms of the $\tH$-basis by
\[         \mu_{S,T} ( \tH_F\otimes \tH_G ): =  \tH_{FG} \qquad \text{and} \qquad   \Delta_{S,T}  (\tH_F) :=  \tH_{F|_S} \otimes    \tH_{F|_T}      .  \]
The antipode is given by
\[ \text{s}_I(\tH_F):= \sum_{G\geq \bar{F}}  (-1)^{l(G)}\, \tH_G . \] 
We have the perfect pairing
\[    \Sig^\ast \otimes_{\text{Had} }  \Sig \to \textbf{E},  \qquad    \tM_F\otimes \tH_G \mapsto \delta_{FG}  .       \]
This realizes $\Sig$ as the dual Hopf algebra of $\Sig^\ast$ \cite[Section 8.6.2]{aguiar2010monoidal}. The Hopf algebra $\Sig$ is equivalently the free cocommutative Hopf algebra on the positive coalgebra $\textbf{E}^\ast_+$ \cite[Section 11.2.5]{aguiar2010monoidal}. This construction naturally equips $\Sig$ with the $\tH$-basis. A second basis of $\Sig$, called the \emph{$\tQ$-basis}, is given by  
   \[ \tQ_F:= \sum_{G\geq F}   (-1)^{ l(G)-l(F) }  \dfrac{1}{    l(G/F) }   \tH_G\qquad \text{or equivalently} \qquad  \mathtt{H}_F=: \sum_{G\geq F} \dfrac{1}{( G/F )!}  \tQ_G . \]
The $\tQ$-basis appears naturally if we construct $\Sig$ according to \cite[Section 11.2.2]{aguiar2010monoidal}, i.e. as the free cocommutative Hopf algebra on the raw species $\textbf{E}^\ast_+$, ignoring the coalgebra structure. For $(S,T)\in [I;\textbf{2}]$ and $F\in \Sigma[I]$, we have \emph{deshuffling},
\[F\res_S\, :=  
\begin{cases}
F|_S &\quad  \text{if $(S|T)\leq Q_F$\footnotemark}\\
0 &\quad \text{otherwise.}
\end{cases}
\]\footnotetext{\ recall that $Q_F$ denotes the partition with blocks the lumps of $F$}The algebraic structure of $\Sig$ is given in terms of the $\tQ$-basis by
\[         \mu_{S,T} ( \tQ_F\otimes \tQ_G ) =  \tQ_{FG} \qquad \text{and} \qquad  \Delta_{S,T}  (\tQ_F) =  \tQ_{F\res_S} \otimes \,   \tQ_{F\res_T}.  \]
We collect the combinatorial descriptions of the algebraic structure of $\Sig$ in the following table.
\bgroup
\def\arraystretch{1.1}%
\begin{table}[H] 
\begin{tabular}{|c|c|c|}
\hline
 $\Sig$ & $\tH$-basis & $\tQ$-basis   \\ \hline
multiplication   & concatenation     &  concatenation    \\ \hline
comultiplication &   restriction   &  deshuffling    \\ \hline
\end{tabular}
\end{table}
\egroup
\noindent Let the \emph{$\tP$-basis} be the basis of $\Sig^\ast$ which is dual to the $\tQ$-basis, thus
\[ \tP_F: = \sum_{G\leq F  }   \dfrac{1}{ (F/G)!  }  \,   \tM_G  .\]
The algebraic structure of $\Sig^\ast$ is given in terms of the $\tP$-basis by 
\[      \mu_{S,T} ( \tP_F\otimes \tP_G ) =  \sum_{H\preceq_{l}  (F|G)}  \tP_{ H }   \qquad   \text{and} \qquad \Delta_{S,T}  (\tP_F) =  \tP_{F\talloblong_S} \otimes    \tP_{F\! \fatslash_{\, T}}. \]
Consider the homomorphism of Hopf algebras given by
\[    \mathbf{O} \twoheadrightarrow \Sig^\ast, \qquad    p \mapsto \tC_p:= \sum_{ F\leq p } \tM_F    .  \]
To see that this is a homomorphism, one should consider the homomorphism $\mathbf{O} \to \Sig^\ast$ given by \cite[Theorem 11.23]{aguiar2010monoidal}, where the homomorphism of monoids is $\textbf{O}\to \textbf{E}$, $p\mapsto \tH_I$. Then \cite[Equation 11.18]{aguiar2010monoidal} recovers the map $p \mapsto \tC_p$. The set 
\[ \big \{\tC_F: F\in \Sigma^\ast[I]\big \} \] 
is a third basis of $\Sig^\ast[I]$. We call this the \emph{$\tC$-basis}, or \emph{cone basis}. 



\begin{prop}
The algebraic structure of $\Sig^\ast$ is given in terms of the $\tC$-basis by
\[   \mu_{S,T} ( \tC_F\otimes \tC_G ) =   \sum_{H\preceq (F|G)}  (-1)^{l(F|G)-l(H)}\,  \tC_{ H  } \qquad \text{and} \qquad  \Delta_{S,T}  (\tC_F) =  \tC_{F\talloblong_S} \otimes    \tC_{F\! \fatslash_{\, T}}   . \]
\end{prop}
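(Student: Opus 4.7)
The plan is to treat the two formulas separately. For comultiplication, I would expand $\tC_F=\sum_{G\leq F}\tM_G$ and apply $\Delta_{S,T}$ in the $\tM$-basis. Since $\Delta_{S,T}(\tM_G)$ vanishes unless $(S,T)\leq G$, the contributing compositions $G$ are precisely those lying in the preposet interval $F\leq G\leq (S,T)$; this interval is empty exactly when $(S,T)\not\leq F$, in which case both sides vanish by definition of $F\talloblong_S$ and $F\fatslash_{\,T}$. When $(S,T)\leq F$, each such $G$ decomposes uniquely as a concatenation $G_1G_2$ with $G_1\leq F|_S$ and $G_2\leq F|_T$, so swapping the order of summation gives $\Delta_{S,T}(\tC_F)=\tC_{F|_S}\otimes\tC_{F|_T}$.

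For multiplication, the given fact that $\bO\twoheadrightarrow\Sig^\ast$, $p\mapsto\tC_p$, is an algebra homomorphism (whose source has preposet-union multiplication) immediately yields $\mu_{S,T}(\tC_F\otimes\tC_G)=\tC_{(F|G)}$, where on the right $(F|G)$ is the preposet union and $\tC_{(F|G)}=\sum_{H\in\Sigma[I],\,H\leq(F|G)}\tM_H$ is the preposet-indexed cone element. So it remains to prove the general identity
\[
\tC_p=\sum_{H\in\Sigma[I],\,H\preceq p}(-1)^{l(p)-l(H)}\,\tC_H
\]
for every preposet $p$ of $I$, specializing to $p=(F|G)$. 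I would verify this by comparing the coefficient of $\tM_{H'}$ on each side for every composition $H'$. The left side gives the indicator of $p\subseteq H'$. Assuming $p\subseteq H'$ with lump decomposition $H'=(L_1,\ldots,L_m)$, every lump of $p$ lies inside a single $L_i$, so $l(p)=\sum_i l(p|_{L_i})$; every composition $H\subseteq H'$ decomposes as a concatenation $H=H_1\cdots H_m$ with $H_i\in\Sigma[L_i]$; and the condition $H\preceq p$ is equivalent to $H_i\preceq p|_{L_i}$ on each block. The right-hand coefficient therefore factors over blocks.

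The problem then collapses to the key identity
\[
\sum_{H\in\Sigma[L],\,H\preceq p}(-1)^{l(p)-l(H)}=1
\]
for an arbitrary preposet $p$ on a finite set $L$. Since every $H\preceq p$ has lumps which are unions of lumps of $p$, one may replace $L$ by the set $Q$ of lumps of $p$ equipped with its induced strict partial order $\tilde p$ (well-defined and transitive by transitivity of $p$), and sum over compositions $\phi$ of $Q$ satisfying $\phi(A)<\phi(B)$ whenever $A>B$ in $\tilde p$. Inducting on $|L|$ by peeling off the first lump $S_1=\phi^{-1}(1)$, the constraints force $S_1$ to be a nonempty subset of $\max(\tilde p)$, and the remainder is a valid composition for the poset restricted to $Q\setminus S_1$. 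With $M=|\max\tilde p|$, the recursion reduces to the binomial identity $\sum_{k=1}^M\binom{M}{k}(-1)^{k-1}=1$, closing the induction (the case $L=\emptyset$ is trivial).

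The main conceptual hurdle I anticipate is the combinatorial reformulation of $H\preceq p$ as a linearization-with-merging of the induced poset $\tilde p$ on the lumps of $p$; once this is in place, both the block factorization over $H'$ and the inclusion-exclusion argument on maximal elements are routine.
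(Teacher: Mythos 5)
Your proof is correct, but it takes a genuinely different route from the paper's. The paper's proof is a one-liner in spirit: it observes that $\tC_F=(-1)^{l(F)}\text{s}_I(\tM_{\bar F})$ and then invokes the fact that the antipode of a commutative Hopf monoid reverses multiplication and comultiplication, so both formulas drop out of the known $\tM$-basis formulas together. You instead prove the comultiplication formula by a direct $\tM$-basis computation and prove the multiplication formula by combining the Hopf morphism $\bO\twoheadrightarrow\Sig^\ast$ with the change-of-basis identity $\tC_p=\sum_{H\preceq p}(-1)^{l(p)-l(H)}\tC_H$, which you establish combinatorially via coefficient comparison and an induction on maximal elements of the lump poset. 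That identity is in fact stated by the paper immediately \emph{after} the proposition with only a sketched geometric ``inclusion-exclusion'' justification, so your argument supplies a self-contained combinatorial proof of it, which is a nice complement. The trade-off: the antipode route is shorter but presupposes the antipode formulas for $\Sig^\ast$; yours is longer but more elementary and makes the preposet combinatorics explicit.

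Two small remarks. First, under the paper's reverse-containment convention ($q\leq p$ means $p\subseteq q$), the chain of compositions $G$ contributing to $\Delta_{S,T}(\tC_F)$ should be written $(S,T)\leq G\leq F$ (equivalently $F\subseteq G\subseteq(S,T)$), not $F\leq G\leq(S,T)$; your subsequent deduction (empty exactly when $(S,T)\not\leq F$) is correct, so this appears to be a notational slip rather than an error. Second, once you have granted that $\bO\twoheadrightarrow\Sig^\ast$ is a morphism of Hopf algebras, the comultiplication formula $\Delta_{S,T}(\tC_F)=\tC_{F\talloblong_S}\otimes\tC_{F\fatslash_{\,T}}$ is an immediate specialization of the coalgebra-morphism property to the composition $p=F$, so your separate $\tM$-basis argument for comultiplication, while correct, is redundant given your own framing. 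You should also note explicitly that when $p\not\subseteq H'$ there are no compositions $H$ with $p\subseteq H\subseteq H'$, so the right-hand coefficient of $\tM_{H'}$ vanishes as it should; you state only the $p\subseteq H'$ case.
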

\begin{proof}
Notice that
\[       \tC_{F}=  (-1)^{ l(F)} \text{s}_I(\tM_{\bar{F}})     .      \]
The antipode $\text{s}_I$ reverses the order of multiplication and comultiplication \cite[Proposition 1.22]{aguiar2010monoidal}. Therefore, for the multiplication, we have
\[   (-1)^{ l(F)+l(G)}     \mu_{S,T} ( \tC_F\otimes \tC_G ) =   \mu_{S,T} \big ( \text{s}_I(\tM_{\bar{F}}) \otimes \text{s}_I(\tM_{\bar{G}}) \big  )  = \sum_{ H\preceq (F|G)   } \text{s}_I(\tM_{ \bar{H}  })=\sum_{ H\preceq (F|G)   }  (-1)^{ l(H)}   \tC_{H} .     \]
Then
 \[       \mu_{S,T} ( \tC_F\otimes \tC_G ) =     \sum_{ H\preceq (F|G)   }  (-1)^{l(F)+l(G)+l(H)}   \tC_{H}= \sum_{H\preceq (F|G)}  (-1)^{l(F|G)-l(H)}\,  \tC_{ H  }   . \]
For the comultiplication, we have
\[  (-1)^{ l(F)}  \Delta_{S,T}  (\tC_F) = \Delta_{S,T}\big  (  \text{s}_I(\tM_{\bar{F}}) \big   )  =     \text{s}_I(\tM_{\bar{F}\talloblong_T}) \otimes  \text{s}_I(  \tM_{\bar{F}\! \fatslash_{\, S}} ) =   \text{s}_I(\tM_{\overline{F\! \fatslash_{\,T}}}) \otimes  \text{s}_I(  \tM_{\overline{F\talloblong_{S}}} )   \]
\[ =   (-1)^{ l(F\! \fatslash_{\,T})+l(F\talloblong_{S})}\,  \tC_{F\! \fatslash_{\,T}}\otimes  \tC_{F\talloblong_{S}}=(-1)^{ l(F)}\,  \tC_{F\! \fatslash_{\,T}}\otimes  \tC_{F\talloblong_{S}} .\qedhere \]
\end{proof}


Using the geometric realization of $\Sig^\ast$ over the braid arrangement (see \autoref{braid}), and an inclusion-exclusion argument, one sees that
\[\tC_{ p  }=   \sum_{ G\preceq p }  (-1)^{l(p)-l(G)}  \tC_G        .  \]
The case $p=(F_1|\dots |F_k)$, where the $F_j$'s are compositions, is proved explicitly in \cite[Theorem 21]{early2017canonical}. We collect the combinatorial descriptions of the algebraic structure of $\Sig^\ast$ in the following table.
\bgroup
\def\arraystretch{1.1}%
\begin{table}[H] 
\begin{tabular}{|c|c|c|c|}
\hline
 $\Sig^\ast$ & $\tM$-basis & $\tP$-basis &  $\tC$-basis   \\ \hline
multiplication &quasishuffling  & shuffling     &  signed-quasishuffling   \\ \hline
comultiplication &   deconcatenation   &   deconcatenation  & deconcatenation \\ \hline
\end{tabular}
\end{table}
\egroup

\subsection{The Primitive Part and the Indecomposable Quotient} \label{lie}

We now describe the Lie algebra which forms the primitive part of $\Sig$, and its dual Lie coalgebra, which is the indecomposable quotient of $\Sig^\ast$. Following Aguiar and Mahajan in \cite{aguiar2017topics}, the Lie algebra will be denoted by $\textbf{Zie}$. The Lie coalgebra will be denoted by $\textbf{Zie}^\ast$. Therefore, we shall have a pair of dual maps, denoted
\[    \cU: \textbf{Zie}\hookrightarrow    \Sig \qquad \text{and} \qquad  \cU^\ast: \Sig^\ast \twoheadrightarrow \textbf{Zie}^\ast.    \]
By the definition of primitive elements and the fact that $\Sig$ is connected, the image of $\textbf{Zie}$ is the kernel of the comultiplication of $\Sig$,
\[  \cU(\textbf{Zie})[I] =  \bigcap_{ (S,T)\in [ I;\textbf{2} ] }  \text{ker} \big  (    \Delta_{S,T} :\Sig[I]\to \Sig[S]\otimes \Sig[T]    \big)      .  \]
Dually, $\textbf{Zie}^\ast$ is the quotient of $\Sig^\ast$ by the image of its multiplication,
\[   \cU^\ast(\Sig^\ast)[I]  = \,  \bigslant{  \Sig^\ast[I]  }{  \displaystyle\sum_{(S,T)\in [ I;\textbf{2} ]}  \text{im}\big (\mu_{S,T}: \Sig^\ast[S]\otimes \Sig^\ast[T]\to \Sig^\ast[I]    \big)   }      .            \]
See \cite[Section 5.5-5.6]{aguiar2013hopf} for more details. 


Let a \emph{tree} $\mathcal{T}$ over $I$ be a planar\footnote{\ i.e. a choice of left and right child is made at every node} full binary tree whose leaves are labeled bijectively with the blocks of a partition $Q_\mathcal{T}$ of $I$. The blocks of $Q_\mathcal{T}$ are called the \emph{lumps} of $\mathcal{T}$. They form a composition $F_\mathcal{T}$ by listing in order of appearance from left to right, called the \emph{debracketing} of $\mathcal{T}$. We may denote trees by nested products `$[\ \cdot\ ,\ \cdot\ ]$' of subsets or trees, see \autoref{fig:tree}. We make the convention that no trees exist over the empty set $\emptyset$. 
\begin{figure}[H]
	\centering
	\includegraphics[scale=0.55]{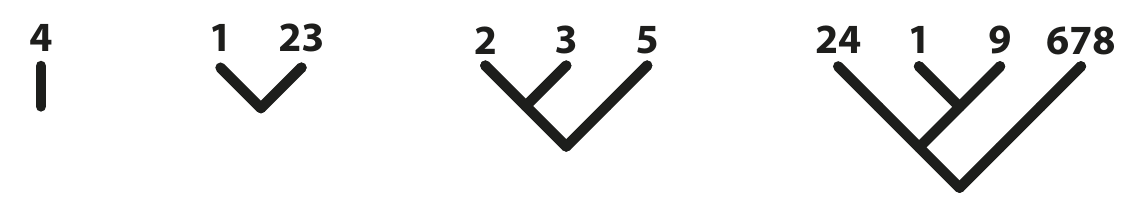}
	\caption{Let $I=\{1,2,3,4,5,6,7,8,9\}$. The trees $[4]$, $[1,23]$ ($\neq[23,1]$), $[[2,3],5]$, $[[24,[1,9]],678]$. The debracketing of $[[24,[1,9]],678]$ is the composition $(24,1,9,678)$. If we put $\mathcal{T}_1=[24,[1,9]]$ and $\mathcal{T}_2=[678]$, then $[\mathcal{T}_1, \mathcal{T}_2]$ would also denote this tree.}
	\label{fig:tree}
\end{figure}  
Given a tree $\mathcal{T}$, let $\text{antisym}(\mathcal{T})$ denote the set of $2^{l(F_{\mathcal{T}})-1}$ many trees which are obtained by switching left and right branches at nodes of $\mathcal{T}$. For $\mathcal{T}' \in \text{antisym}(\mathcal{T})$, let $(\mathcal{T}, \mathcal{T}')\in \bZ/2\bZ$ denote the parity of the number of node switches required to bring $\mathcal{T}$ to $\mathcal{T}'$. 


\begin{figure}[t]
	\centering
	\includegraphics[scale=0.51]{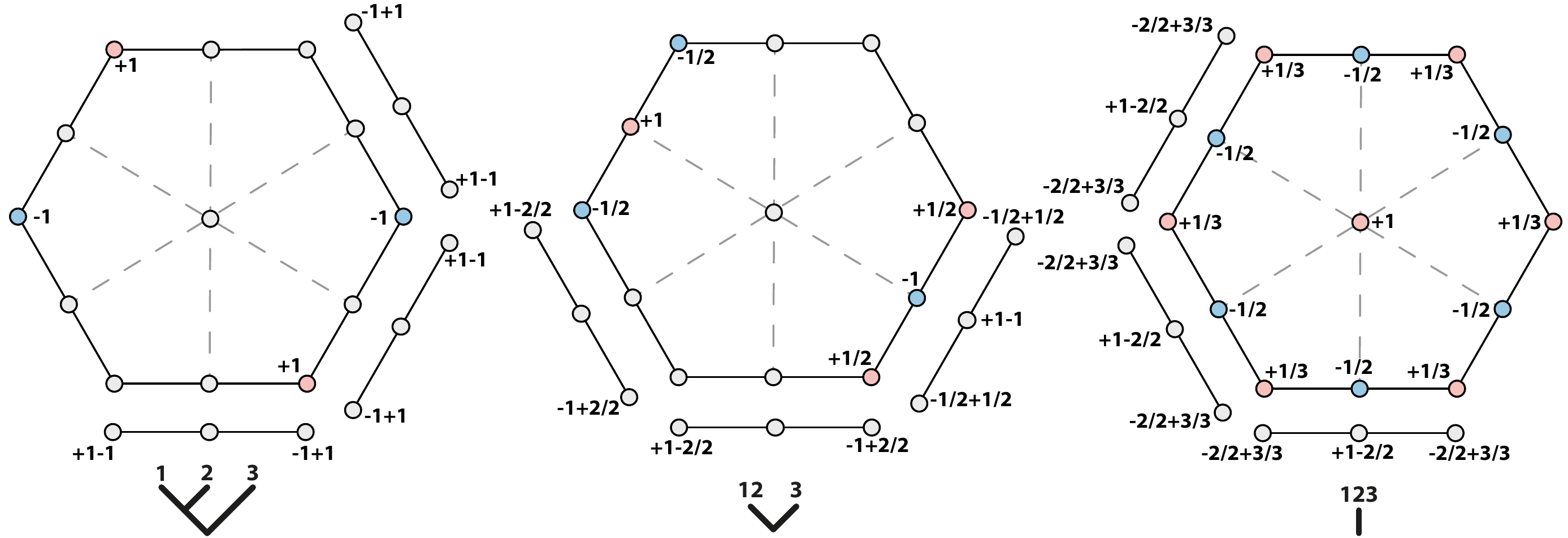}
	\caption{The primitive elements $\tQ_\mathcal{T}\in\Sig[I]$ corresponding to the trees $\mathcal{T}\in\big  \{  [[1,2],3], [12,3], [123]\big  \}$, where the $\tH$-basis $\Sigma$ has been identified with the midpoints of faces of the permutohedron (see the discussion of the Losev-Manin moduli space in the \hyperref[intro]{Introduction}). The (necessarily zero) comultiples of the primitive elements, i.e. their projections onto facets, are shown computed.}  
	\label{fig:lieele}
\end{figure} 

Let $\textbf{Zie}[I]$ be the vector space of formal linear combinations of trees over $I$, modulo antisymmetry and the Jacobi identity as interpreted on trees in the usual way. This defines the positive vector species $\textbf{Zie}$. If $\Lie$ is the species of the (positive) Lie operad, $\bE^\ast_+$ is the positive exponential species, and $\circ$ denotes the plethystic monoidal product of species, then   
\[
\textbf{Zie}[I]=    \Lie \circ \bE^\ast_+[I]= \bigoplus_{P\in \Pi[I]}   \Lie[P]  
.\footnote{\ as the argument of a (co)species, we let $P$ denote the set whose elements are the blocks of $P$}\] 
Define the following map, which embeds $\textbf{Zie}$ as the primitive part of $\Sig$,
\[     \cU:  \textbf{Zie} \hookrightarrow   \Sig, \qquad \mathcal{T} \mapsto   \tQ_\mathcal{T}  := \sum_{\mathcal{T}' \in \text{antisym}(\mathcal{T})}  (-1)^{ (\mathcal{T},\mathcal{T}') }  \tQ_{F_{\mathcal{T}'}}.      \]
See \autoref{fig:lieele}. The Lie bracket of $\textbf{Zie}$, which we denote by $\partial^\ast$, connects a pair of trees $\mathcal{T}_1$, $\mathcal{T}_2$ over disjoint sets by adding a new root whose children are the roots of $\mathcal{T}_1$ and $\mathcal{T}_2$,
\[ \partial^\ast: \textbf{Zie}\otimes_{\text{Day}} \textbf{Zie}\to \textbf{Zie}, \qquad    \partial_{S,T}^\ast(\mathcal{T}_1 \otimes \mathcal{T}_2) := [ \mathcal{T}_1 , \mathcal{T}_2]     .         \]
A geometric interpretation of this Lie bracket inside $\Sig$ can be seen in \autoref{fig:brack}. For $F\in \Sigma[I]$, let $[F]\in \textbf{Zie}[I]$ denote the `right comb-tree' with debracketing $F$; that is, if $F=(S_1,\dots, S_k)$, then 
\[     [F]:=  [ \dots [[ S_1  ,  S_2 ] , S_3 ], \dots  , S_k ]  . \]
Given a distinguished element $i_0\in I$, let 
\[\Sigma_{i_0}[I]  :=\big  \{ (S_1, \dots, S_k)\in \Sigma[I]:  i_0\in S_1   \big\}      .  \] 
If $F\in \Sigma_{i_0}[I]$, then $[F]$ is called a \emph{standard right comb-tree} over $I$ with respect to $i_0$.  

\begin{figure}[t]
	\centering
	\includegraphics[scale=0.7]{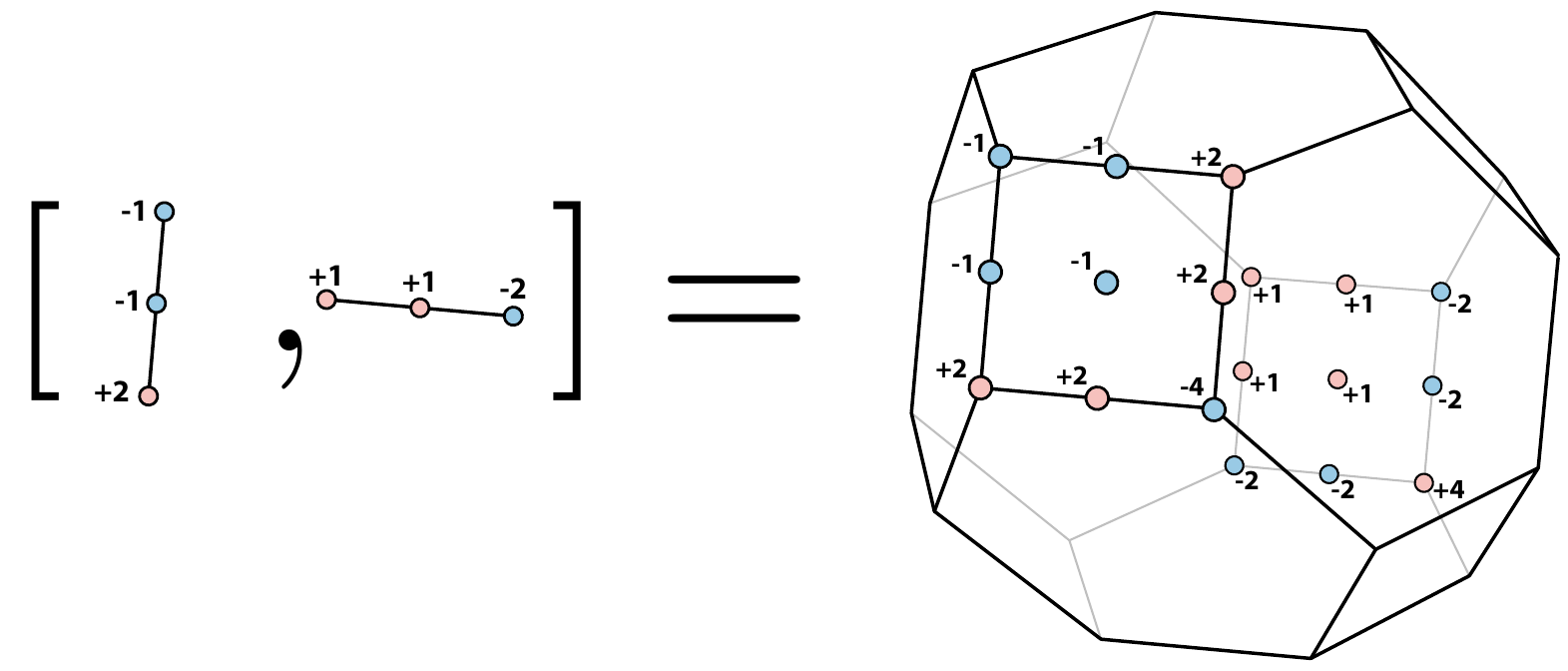}
	\caption{The Lie bracket of (the image of) $\textbf{Zie}$ given in terms of the $\tH$-basis. The multiplication of $\Sigma$ corresponds to embedding facets. The Lie bracket is the commutator of the multiplication, and so embeds a facet twice, on opposite sides and with opposite sign.}
	\label{fig:brack}
\end{figure}

\begin{prop}\label{prop:rightcomb}
The set of standard right-comb trees
\[\big \{  [F]:F\in \Sigma_{i_0}[I]  \big \} \] 
is a basis of $\textbf{Zie}[I]$. 
\end{prop}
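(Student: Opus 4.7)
The plan is to reduce the claim to the classical fact that right-comb bracketings with a fixed first letter form a basis of the free Lie algebra on a set, and then apply this to each summand in the plethystic decomposition
\[
\textbf{Zie}[I] = \Lie \circ \bE^\ast_+[I] = \bigoplus_{P\in \Pi[I]} \Lie[P].
\]
First, I would observe that the map $F \mapsto [F]$ respects this grading by the underlying partition: if $F \in \Sigma_{i_0}[I]$ and $Q_F = P \in \Pi[I]$, then $[F]$ lies in $\Lie[P]$, the summand indexed by $P$. Moreover, an element $F = (S_1,\dots,S_k) \in \Sigma_{i_0}[I]$ with $Q_F = P$ is precisely a linear ordering of the blocks of $P$ whose first block is the (unique) block $B_0$ of $P$ containing $i_0$. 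So it suffices to show that, for each fixed $P$, the family
\[
\big\{\,[F] : F \in \Sigma_{i_0}[I],\ Q_F = P\,\big\}
\]
is a basis of $\Lie[P]$.

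Fix $P$ and set $k := |P|$; write $B_0 \in P$ for the block containing $i_0$. The number of linear orderings of $P$ starting with $B_0$ is $(k-1)!$, which matches the well-known dimension $\dim \Lie[k] = (k-1)!$, so counting is correct. For linear independence and spanning, I would invoke the classical Dynkin–Specht–Wever theorem (equivalently, a standard PBW/comb-basis argument): for any finite set $S$ and any chosen $s_0 \in S$, the set of right-comb brackets
\[
\big\{\,[\dots[[s_0,s_{\sigma(1)}],s_{\sigma(2)}],\dots,s_{\sigma(|S|-1)}] : \sigma \in \operatorname{Sym}(S\setminus\{s_0\})\,\big\}
\]
is a basis of the free Lie algebra $\Lie[S]$. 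Applied with $S$ the set of blocks of $P$ and $s_0 = B_0$, this is exactly the family above, so it is a basis of $\Lie[P]$.

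The spanning half of the cited classical result is the only nontrivial ingredient; I would sketch it as an induction on $k$, using antisymmetry to move $B_0$ to the outermost-left leaf and the Jacobi identity to iteratively rewrite any bracketing of the remaining blocks as a linear combination of right-combs off of $B_0$. Linear independence then follows either by dimension count against the known $(k-1)!$ or, more intrinsically, by evaluating on the left-normed Dynkin idempotent. Assembling the bases for each $P$ gives a basis of $\bigoplus_{P \in \Pi[I]} \Lie[P] = \textbf{Zie}[I]$, which is the statement of the proposition. The main (and only) obstacle is the Dynkin basis result for $\Lie[S]$, which is standard and can be quoted rather than reproved.
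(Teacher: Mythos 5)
Your proof is correct and follows essentially the same route as the paper's: both establish spanning by using antisymmetry to move the distinguished block to the leftmost leaf and the Jacobi identity to comb the remaining branches to the right, and both then conclude by a dimension count against $\dim \Lie[k] = (k-1)!$. The only difference is one of presentation: you make the plethystic decomposition $\textbf{Zie}[I] = \bigoplus_{P \in \Pi[I]} \Lie[P]$ and the per-block bookkeeping explicit (and package the comb argument as the Dynkin--Specht--Wever theorem), whereas the paper argues spanning directly in $\textbf{Zie}[I]$ and leaves the partition-by-partition dimension comparison implicit when it cites $\dim \Lie[I] = (n-1)!$.
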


\begin{proof}
To show that any tree $\mathcal{T}\in\textbf{Zie}[I]$ is a linear combination of standard right comb-trees, first move $i_0$ to the left most branch of $\mathcal{T}$ by using antisymmetry. Then comb branches to the right using the following consequence of antisymmetry and the Jacobi identity,
	\[  [S,[T,U ] ] =[[S,T],U] -[[S,U],T]    . \]
Therefore standard right-comb trees span $\textbf{Zie}[I]$. The result then follows since the dimension of $\Lie[I]$ is $(n-1)!\,$, see e.g. \cite[Theorem 10.38]{aguiar2017topics}.
\end{proof}


Let 
\[\textbf{Zie}^\ast[I]:= \Hom\big ( \textbf{Zie}[I], \Bbbk\big   )  .   \] 
This defines the positive vector cospecies $\textbf{Zie}^\ast$. For $F\in \Sigma^\ast[I]$, consider the set of trees $\boldsymbol{\mathcal{T}}(F)$ given by
\[    \boldsymbol{\mathcal{T}}(F) :=   \bigsqcup_{ \substack{ \mathcal{T}\in \textbf{Zie}[I]\\[1pt] F_\mathcal{T}=F} } \text{antisym}(\mathcal{T}) . \]
This is indeed a disjoint union because for each $\mathcal{T}'\in \boldsymbol{\mathcal{T}}(F)$, there is a unique tree with debracketing $F$ which is obtained from $\mathcal{T}'$ by switching branches at nodes. For $F\in \Sigma^\ast[I]$, let $\tp_F$ be the function on trees over $I$ given by
\[      \tp_F(\mathcal{T}'): =  \begin{cases}
(-1)^{(\mathcal{T}, \mathcal{T}')}   &\quad  \text{if}\   \mathcal{T}'\in  \boldsymbol{\mathcal{T}}(F), \ \text{where}\   \mathcal{T}'\in  \text{antisym}(\mathcal{T})       \\
0  &\quad \text{if}\  \mathcal{T}'\notin \boldsymbol{\mathcal{T}}(F).
\end{cases}
\]            

\begin{prop}
For $F\in \Sigma^\ast[I]$, we have
\[  \tp_F\in \textbf{Zie}^\ast[I]  . \]
\end{prop}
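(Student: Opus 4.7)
The plan is to identify $\tp_F$ with the pullback $\cU^\ast(\tP_F)=\tP_F\circ \cU$ along the embedding $\cU:\textbf{Zie}\hookrightarrow \Sig$, and thereby deduce that $\tp_F$ lies in $\textbf{Zie}^\ast[I]$ for free. First, I would extend the formula $\mathcal{T}\mapsto \tQ_\mathcal{T}:=\sum_{\mathcal{T}'\in \text{antisym}(\mathcal{T})}(-1)^{(\mathcal{T},\mathcal{T}')}\tQ_{F_{\mathcal{T}'}}$ linearly to obtain a map $\widetilde{\cU}$ from the free vector space on trees over $I$ into $\Sig[I]$. By the very definition of $\cU$ given in the excerpt, $\widetilde{\cU}$ factors through the quotient $\textbf{Zie}[I]$ by antisymmetry and Jacobi, yielding $\cU$.

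Next I would verify the identification $\tp_F=\tP_F\circ \widetilde{\cU}$. Using duality between the $\tQ$- and $\tP$-bases,
\[
\langle \tP_F,\widetilde{\cU}(\mathcal{T}')\rangle = \sum_{\substack{\mathcal{T}''\in \text{antisym}(\mathcal{T}')\\[1pt] F_{\mathcal{T}''}=F}} (-1)^{(\mathcal{T}',\mathcal{T}'')}.
\]
If $\mathcal{T}'\notin \boldsymbol{\mathcal{T}}(F)$, then no such $\mathcal{T}''$ exists and both sides vanish, matching the definition of $\tp_F(\mathcal{T}')$. If $\mathcal{T}'\in \boldsymbol{\mathcal{T}}(F)$, then by the disjointness remark immediately preceding the definition of $\tp_F$ there is exactly one $\mathcal{T}''\in \text{antisym}(\mathcal{T}')$ with $F_{\mathcal{T}''}=F$; call it $\mathcal{T}$. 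Since the parity of the number of node-switches needed to go between two trees is symmetric in its arguments, we have $(\mathcal{T}',\mathcal{T})=(\mathcal{T},\mathcal{T}')$, so the sum equals $(-1)^{(\mathcal{T},\mathcal{T}')}$, which is precisely $\tp_F(\mathcal{T}')$.

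Now the conclusion is immediate: since $\widetilde{\cU}$ factors through $\textbf{Zie}[I]$, the linear functional $\tp_F=\tP_F\circ \widetilde{\cU}$ vanishes on the antisymmetry and Jacobi relations, hence descends to a well-defined element of $\textbf{Zie}^\ast[I]=\Hom(\textbf{Zie}[I],\Bbbk)$. The only real content is the matching of signs in the second step; the main conceptual point, which is that $\cU$ is already known to be well-defined on $\textbf{Zie}$, does all the heavy lifting. If one preferred a self-contained check not relying on the well-definedness of $\cU$, the main obstacle would be verifying Jacobi directly: for a subtree of the form $[[A,B],C]$, the three cyclic rotations produce trees with debracketings $F_A F_B F_C$, $F_B F_C F_A$, $F_C F_A F_B$, and one would have to track how the antisymmetry classes $\boldsymbol{\mathcal{T}}(F)$ interact with these rotations and compute the signs $(-1)^{(\mathcal{T},\mathcal{T}')}$ carefully, which is essentially the same calculation that makes $\cU$ well-defined in the first place.
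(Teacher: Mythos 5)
Your proof is correct, but it takes a genuinely different route from the paper's. The paper verifies directly that the function $\tp_F$ vanishes on the antisymmetry and Jacobi relators: antisymmetry is immediate from the definition, and for Jacobi it checks that for a tree $\mathcal{T}'_{STU}$ in $\boldsymbol{\mathcal{T}}(F)$ containing a branch $[[S,T],U]$ with $\tp_F(\mathcal{T}'_{STU})=1$, the two cyclic rotations produce values $-1$ and $0$, so the three terms sum to zero. You instead identify $\tp_F$ with the pullback $\tP_F\circ\widetilde{\cU}$ and piggyback on the asserted (but not proved in this section) well-definedness of $\cU$ on $\textbf{Zie}$. Your pairing computation is sound — the disjointness remark gives the uniqueness of $\mathcal{T}$, and the symmetry $(\mathcal{T},\mathcal{T}')=(\mathcal{T}',\mathcal{T})$ is clear — and in effect you merge this proposition with the later one stating $\cU^\ast(\tP_F)=\tp_F$, whose proof in the paper is essentially your pairing calculation. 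The trade-off is that the paper's argument is self-contained (it does not presuppose that $\cU$ descends to $\textbf{Zie}$, so it could in principle precede or even feed into establishing that fact), while your version is shorter but offloads the combinatorial content onto the black-boxed well-definedness of $\cU$, a point you candidly acknowledge in your closing remark.
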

\begin{proof}
The definition of $\tp_F$ ensures that it satisfies antisymmetry. For the Jacobi identity, suppose that a tree $\mathcal{T}'_{STU}\in \boldsymbol{\mathcal{T}}(F)$ has a branch $[[S,T],U]$. We may assume that $\tp_F(\mathcal{T}'_{STU})=1$ by antisymmetry. Then the tree $\mathcal{T}''_{STU}$ obtained from $\mathcal{T}'_{STU}$ by replacing the branch $[[S,T],U]$ with $[S,[T,U]]$ has $\tp_F(\mathcal{T}''_{STU})=1$. Therefore the tree $\mathcal{T}'_{TUS}$ obtained from $\mathcal{T}'_{STU}$ by replacing the branch $[[S,T],U]$ with $[[T,U],S]$ has $\tp_F(\mathcal{T}'_{STU})=-1$. However, the tree $\mathcal{T}'_{UST}$ obtained from $\mathcal{T}'_{STU}$ by replacing the branch $[[S,T],U]$ with $[[U,S],T]$ has $\tp_F(\mathcal{T}'_{UST})=0$, because there does not exist a switching of the nodes of $[[U,S],T]$ which produces the debracketing $(S,T,U)$. Then
\[ \tp_F(\mathcal{T}'_{STU})   +\tp_F(\mathcal{T}'_{TUS})+\tp_F(\mathcal{T}'_{UST})=  1-1+0=0. \qedhere    \]
\end{proof}

Given a distinguished coordinate $i_0\in I$, let 
\[\Sigma_{i_0}^\ast[I]  := \big  \{ F=(S_1, \dots, S_k)\in \Sigma^\ast[I]:  i_0\in S_1  \big \}  .\]

\begin{prop}
The set of functions on trees
\[\big \{ \tp_F :F\in \Sigma^\ast_{i_0}[I]  \big \}  \] 
is the basis of $\textbf{Zie}^\ast[I]$ which is dual to the basis of standard right-comb trees from \autoref{prop:rightcomb}.
\end{prop}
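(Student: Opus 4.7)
The plan is to establish the duality relation $\tp_F([G]) = \delta_{FG}$ directly for all $F \in \Sigma^\ast_{i_0}[I]$ and $G \in \Sigma_{i_0}[I]$. Combined with \autoref{prop:rightcomb}, which identifies $\{[G] : G \in \Sigma_{i_0}[I]\}$ as a basis of $\textbf{Zie}[I]$, this will immediately pinpoint $\{\tp_F : F \in \Sigma^\ast_{i_0}[I]\}$ as the dual basis of $\textbf{Zie}^\ast[I]$ (the cardinalities match, so no separate spanning argument is needed).

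Unwinding the definition, $\tp_F([G])$ is nonzero precisely when there exists a (necessarily unique) tree $\mathcal{T}$ with $F_\mathcal{T} = F$ and $[G] \in \text{antisym}(\mathcal{T})$. Since $\text{antisym}$ is an equivalence relation that preserves the underlying non-planar binary tree shape, any such $\mathcal{T}$ must have the same shape as the right comb $[G]$ and the same multiset of leaf-labels, namely the lumps $U_1, \ldots, U_k$ of $G = (U_1, \ldots, U_k)$.

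The key geometric step is the following observation about right combs: the leftmost root-to-leaf path of $[G]$ visits every internal node and terminates in the leaf $U_1$, which is the unique lump of $G$ containing $i_0$. Since $F \in \Sigma^\ast_{i_0}[I]$, the leftmost leaf of $\mathcal{T}$ must also contain $i_0$. However, switching any internal node of $[G]$ on this leftmost path would promote some lump $U_j$ with $j > 1$ to the leftmost position, and no such $U_j$ contains $i_0$. It follows that no node can be switched; hence $\mathcal{T} = [G]$, giving $F = G$ and $(\mathcal{T}, [G]) = 0$, so $\tp_F([G]) = 1$. Otherwise $\tp_F([G]) = 0$.

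The main obstacle is articulating precisely why the "$i_0$-in-first-lump" constraint rigidifies the antisym equivalence class of $[G]$ to a single element. This hinges on the specific feature of standard right combs — that every internal node lies on the leftmost root-to-leaf path — which fails for arbitrary trees and is what makes the basis $\{[G] : G \in \Sigma_{i_0}[I]\}$ so well-behaved under this pairing. Once this rigidity is in hand, the duality is immediate and the proposition follows.
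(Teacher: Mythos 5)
Your proof is correct and follows the same route as the paper's: establish $\tp_F([G]) = \delta_{FG}$ by showing that $[G]$ is the unique tree in $\text{antisym}([G])$ whose leftmost lump contains $i_0$, which forces $\mathcal{T} = [G]$ and hence $F = G$. You unpack the right-comb rigidity in more detail than the paper does; one small point worth tightening is that when several nodes are switched it is the \emph{topmost} switched node on the leftmost path that determines the new leftmost lump (your phrasing ``switching any internal node \dots\ would promote'' reads as a single-switch statement), but this is a presentational nuance, not a gap.
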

\begin{proof}
We have
\[    \tp_F\big ( [F]\big )=1     \]
because $\mathcal{T}\in   \boldsymbol{\mathcal{T}}(F)$ with $(-1)^{(\mathcal{T}, \mathcal{T})}=(-1)^{0} = 1$. Let $F\in \Sigma^\ast_{i_0}[I]$ and $G\in \Sigma_{i_0}[I]$ with
\[[G] \in \boldsymbol{\mathcal{T}}(F).\] 
This means that there exists a tree $\mathcal{T}$ with debracketing $F$ such that $[G] \in \text{antisym}( \mathcal{T} )$, which implies that $\mathcal{T}\in  \text{antisym}([G])$. Since $[G]$ is a right comb-tree, it is the only tree in $\text{antisym}([G])$ which contains $i_0$ in its left most lump. Therefore we must have $\mathcal{T}=[G]$, and so $F= G$. The contrapositive of this is that if $F\neq G$, then $[G] \notin \text{antisym}( \mathcal{T} )$, and so
\[    \tp_F\big ( [G]\big )=0 . \qedhere \]
\end{proof}

We call the functions $\tp_F$, for $F\in \Sigma_{i_0}^\ast[I]$, the \emph{$\tp$-basis} of $\textbf{Zie}^\ast$. Of course, it depends on the choice $i_0\in I$. Let 
\[\cU^\ast:\Sig^\ast \twoheadrightarrow \textbf{Zie}^\ast\] 
denote the linear dual of the map $\cU$. Thus, $\cU^\ast$ is isomorphic to the indecomposable quotient map of $\Sig^\ast$, i.e. $\cU^\ast$ quotients out the image of the multiplication of $\Sig^\ast$.

\begin{prop}
The map $\cU^\ast$ is given by
\[     \cU^\ast: \Sig^\ast\twoheadrightarrow \textbf{Zie}^\ast, \qquad      \tP_F\mapsto  \tp_F      . \]
\end{prop}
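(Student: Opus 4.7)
The plan is to evaluate $\cU^\ast(\tP_F)$ on an arbitrary tree $\mathcal{T} \in \textbf{Zie}[I]$ and compare the result with $\tp_F(\mathcal{T})$ from the definition. Since $\cU^\ast$ is by construction the linear dual of $\cU$, the first step is to unfold
\[
\cU^\ast(\tP_F)(\mathcal{T}) = \tP_F\bigl(\cU(\mathcal{T})\bigr) = \tP_F(\tQ_\mathcal{T}),
\]
substitute the explicit formula $\tQ_\mathcal{T} = \sum_{\mathcal{T}' \in \text{antisym}(\mathcal{T})} (-1)^{(\mathcal{T}, \mathcal{T}')}\, \tQ_{F_{\mathcal{T}'}}$, and apply the duality $\tP_F(\tQ_G) = \delta_{F,G}$ between the $\tP$- and $\tQ$-bases. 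This reduces the computation to the signed count
\[
\cU^\ast(\tP_F)(\mathcal{T}) = \sum_{\substack{\mathcal{T}' \in \text{antisym}(\mathcal{T}) \\ F_{\mathcal{T}'} = F}} (-1)^{(\mathcal{T}, \mathcal{T}')}.
\]

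Next I would establish the combinatorial fact that distinct trees in $\text{antisym}(\mathcal{T})$ have distinct debracketings: the trees in an antisymmetrization class share a common unordered binary tree shape, and a debracketing specifies the left-to-right leaf order, which uniquely recovers the left/right orientation at every internal node. Consequently the sum above has at most one term. If no such $\mathcal{T}'$ exists, then $\mathcal{T} \notin \boldsymbol{\mathcal{T}}(F)$, and both $\cU^\ast(\tP_F)(\mathcal{T})$ and $\tp_F(\mathcal{T})$ vanish by definition. Otherwise, let $\mathcal{T}_0$ denote the unique such tree; then $\mathcal{T}_0$ is precisely the representative of $\mathcal{T}$'s antisymmetrization class with debracketing $F$, i.e.\ the tree featured in the definition $\tp_F(\mathcal{T}) = (-1)^{(\mathcal{T}_0, \mathcal{T})}$.

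The final step is a sign check: the expressions $(-1)^{(\mathcal{T}, \mathcal{T}_0)}$ and $(-1)^{(\mathcal{T}_0, \mathcal{T})}$ agree, since reversing a sequence of $k$ node-switches yields a sequence of $k$ switches in the opposite direction, so $(\mathcal{T}, \mathcal{T}_0) = (\mathcal{T}_0, \mathcal{T})$ in $\bZ/2\bZ$. This gives $\cU^\ast(\tP_F)(\mathcal{T}) = \tp_F(\mathcal{T})$ on every tree $\mathcal{T}$, and hence in $\textbf{Zie}^\ast[I]$. The argument is essentially bookkeeping around antisymmetrization classes once the $\tP$/$\tQ$-duality is in hand; the only mild obstacle is the symmetry of the parity sign, and no deeper ingredient is required.
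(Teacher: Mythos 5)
Your proof is correct and follows essentially the same route as the paper: unfold $\cU^\ast(\tP_F)(\mathcal{T}) = \tP_F(\tQ_\mathcal{T})$, expand $\tQ_\mathcal{T}$ over $\text{antisym}(\mathcal{T})$, apply the $\tP$/$\tQ$-duality, and match the resulting signed count against the definition of $\tp_F$. The one methodological difference is that the paper evaluates only on the standard right-comb basis $\{[G] : G\in\Sigma_{i_0}[I]\}$ and then invokes \autoref{prop:rightcomb}, compressing the remaining bookkeeping into ``directly from the definitions,'' whereas you verify the equality on an arbitrary tree $\mathcal{T}$, making explicit the two facts the paper leaves implicit --- that trees in an antisymmetrization class have pairwise distinct debracketings, and that the parity $(\mathcal{T},\mathcal{T}')$ is symmetric in its arguments. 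Your version is marginally more self-contained (it needs only that trees span $\textbf{Zie}[I]$, not that the right-combs form a basis), at the cost of spelling out the combinatorics; both are sound.
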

\begin{proof}
Since $\cU^\ast$ is the dual of $\cU$, for $F\in \Sigma^\ast[I]$ and $G\in \Sigma_{i_0}[I]$, we have
\[\cU^\ast(\tP_F)([G])=\tP_F(\tQ_{[G]}) .\]
But, directly from the definitions of $\tQ_\mathcal{T}$ and $\tp_F$, we see that 
\[ \tP_F(\tQ_{[G]}) =   \tp_F([G])   .  \]
Thus
\[\cU^\ast(\tP_F)([G])= \tp_F([G]) . \]
Because the right-comb trees $[G]$, for $G\in \Sigma_{i_0}[I]$, form a basis of $\textbf{Zie}[I]$, we must have $\cU^\ast(\tP_F)=\tp_F$.
\end{proof}

Let
\[    \tc_p:= \cU^\ast(  \tC_p ) \qquad \text{and} \qquad  \tm_F:= \cU^\ast(  \tM_F ).            \]
Then $\{\tc_F :F\in \Sigma^\ast_{i_0}[I] \}$ and $\{ \tm_F :F\in \Sigma^\ast_{i_0}[I]\}$ are two more bases of $\textbf{Zie}^\ast[I]$. We call them the \emph{$\tc$-basis} and the \emph{$\tm$-basis} respectively. 
Since $\textbf{Zie}^\ast$ is the quotient of $\Sig^\ast$ by the image of its multiplication, we have the following three choices of generating relations for $\textbf{Zie}^\ast$, quasishuffling, shuffling, and signed-quasishuffling,
\[  \sum_{H\preceq (  F|G  )}  \tm_H=0  , \qquad     \sum_{H\preceq_{l}  (  F|G  )}  \tp_H=0    , \qquad        \sum_{H\preceq (  F|G  )} (-1)^{l(H)} \tc_H=0.      \]
We have a relation for each choice of triple
\[(S,T)\in [I;\textbf{2}],\qquad F\in \Sigma^\ast[S],\quad G\in \Sigma^\ast[T].\] 
The quotient of $\textbf{Zie}^\ast$ by the relations $\tp_F=0$, for $F\notin \text{L}^\ast[I]$, is the (positive) Lie cooperad $\Lie^\ast$, whose shuffle relations are well-known. 

The Lie cobracket $\partial$ of $\textbf{Zie}^\ast$, which is the linear dual map of the Lie bracket $\partial^\ast$ of $\textbf{Zie}$, is given in terms of the $\tp$-basis (and also the $\tc$-basis and $\tm$-basis) by the cocommutator of deconcatenation, 
\[  \partial:\textbf{Zie}^\ast\to  \textbf{Zie}^\ast\otimes_{\text{Day}} \textbf{Zie}^\ast, \qquad   \partial_{[S,T]}(\tp_F):=\tp_{F\talloblong_S}\otimes \tp_{F\! \fatslash_{\, T}} -\tp_{F\talloblong_T}\otimes \tp_{F\! \fatslash_{\, S} }.\]

\section{Geometric Realizations} \label{georel}

\noindent We now give two geometric realizations of the Hopf algebra $\Sig^\ast$. First, we realize $\Sig^\ast$ as \hbox{piecewise-constant} functions on the braid arrangement. Second, we realize $\Sig^\ast$ as functionals of piecewise-constant functions on the adjoint braid arrangement which arise from formal linear combinations of conical subspaces generated by coroots. The quotients obtained by restricting these realizations to chambers are the commutative Hopf algebra of linear orders $\textbf{L}^\ast$ for the braid arrangement (this is clear), and the indecomposable quotient Lie coalgebra $\textbf{Zie}^\ast$ for the adjoint braid arrangement (we prove this in \autoref{main}). 




\subsection{Root Datum and Hyperplane Arrangements} \label{sec:root}

We describe the braid arrangement and its corresponding adjoint arrangement. These hyperplane arrangements are naturally constructed over the root datum of $\text{SL}_I(\bC)$,\footnote{\ Note that some authors construct the braid arrangement over the root datum of $\text{GL}_I(\bC)$, and recover the braid arrangement in our sense as the essentialization. The corresponding `adjoint' arrangement is known as the all-subset arrangement, and the adjoint arrangement in our sense is recovered as a restriction.} or dually $\text{PGL}_I(\bC)$. 
Consider the set
\[      \bR^I:=\{  \text{functions}\     \lambda: I\to \bR    \}     \]
and let $\bZ^I\subset \bR^I$ be the subset of integer-valued functions. Then $\bR^I$ and $\bZ^I$ are groups by taking the pointwise addition of functions. For a subset $S\subseteq I$, let $\lambda_S\in \bZ^I$ be given by 
\[\lambda_S(i):=1 \quad \text{if} \quad  i\in S \qquad \text{and} \qquad \lambda_S(i):=0 \quad \text{if} \quad i\notin S.\] 
If we consider functions only up to translations of $\bR$, we obtain the quotient groups
\[     \text{T}^I:= \bR^I/ \bR \lambda_I \qquad \text{and} \qquad \text{P}^I:= \bZ^I/ \bZ \lambda_I  .  \]
The lattice $\text{P}^I\subset \text{T}^I$ is called the \emph{weight lattice} of $\text{SL}_I(\bC)$. For $(S,T)\in [I;\textbf{2}]$, the \emph{fundamental weight} $\lambda_{ST}\in \text{P}^I$ is the image of $\lambda_S\in \bR^I$ in $\text{T}^I$. Recall that for $\text{SL}_I(\bC)$, fundamental weights coincide with minuscule weights. The partial product on $[I;\textbf{2}]$ encodes the addition of fundamental weights, restricted to the case where the sum is again a fundamental weight,
\[       \lambda_{ST} +  \lambda_{UV}= \lambda_{ (S,T)\, \circ\, (U,V) }     .     \]
To see this, we have for example,
\bgroup
\renewcommand{\tabcolsep}{6mm}
\def\arraystretch{1.2}%
\begin{table}[H] 
\begin{tabular}{ c c c }
 $T\supset U$ & $S\supset V$ &  otherwise               \\ \hline
 $\ \ \,   [1:1:1:0:0:0:0]$ & $\ \ \, [1:1:1:1:1:0:0]$ &  $ \ \ \,[1:1:1:0:0:0:0]   $           \\ 
 $+[0:0:0:0:0:1:1]$ &  $+   [0:1:1:1:1:1:1]$    &$+[1:0:0:0:0:1:1]$ \\ 
 $ =[1:1:1:0:0:1:1]$   &$= [1:2:2:2:2:1:1]$&  $=[2:1:1:0:0:1:1]$ \\ 
  &$=                 [0:1:1:1:1:0:0]$ &               \\
\end{tabular}
\end{table}
\noindent Let 
\[\bR I:=\{ h:  h=(h_i)_{i\in I},\, h_i\in \bR   \}\] 
and let $\bZ I \subset \bR I$ be the free $\bZ$-module on $I$. We have the perfect pairing
\[ \la-, - \ra:\bR I\times \bR^I\to \bR, \qquad   \la h, \lambda  \ra := \sum_{i\in I}h_i \,  \lambda(i)    . \]
Let 
\[\text{T}^\vee_I  :=\big \{  h\in \bR I  :\la h, \lambda_I\ra =0    \big   \} \qquad \text{and}\qquad   \text{Q}^\vee_I   :=\big \{  h\in \bZ I  :\la h, \lambda_I \ra =0    \big   \}. \]

\begin{figure}[t]
	\centering
	\includegraphics[scale=0.4]{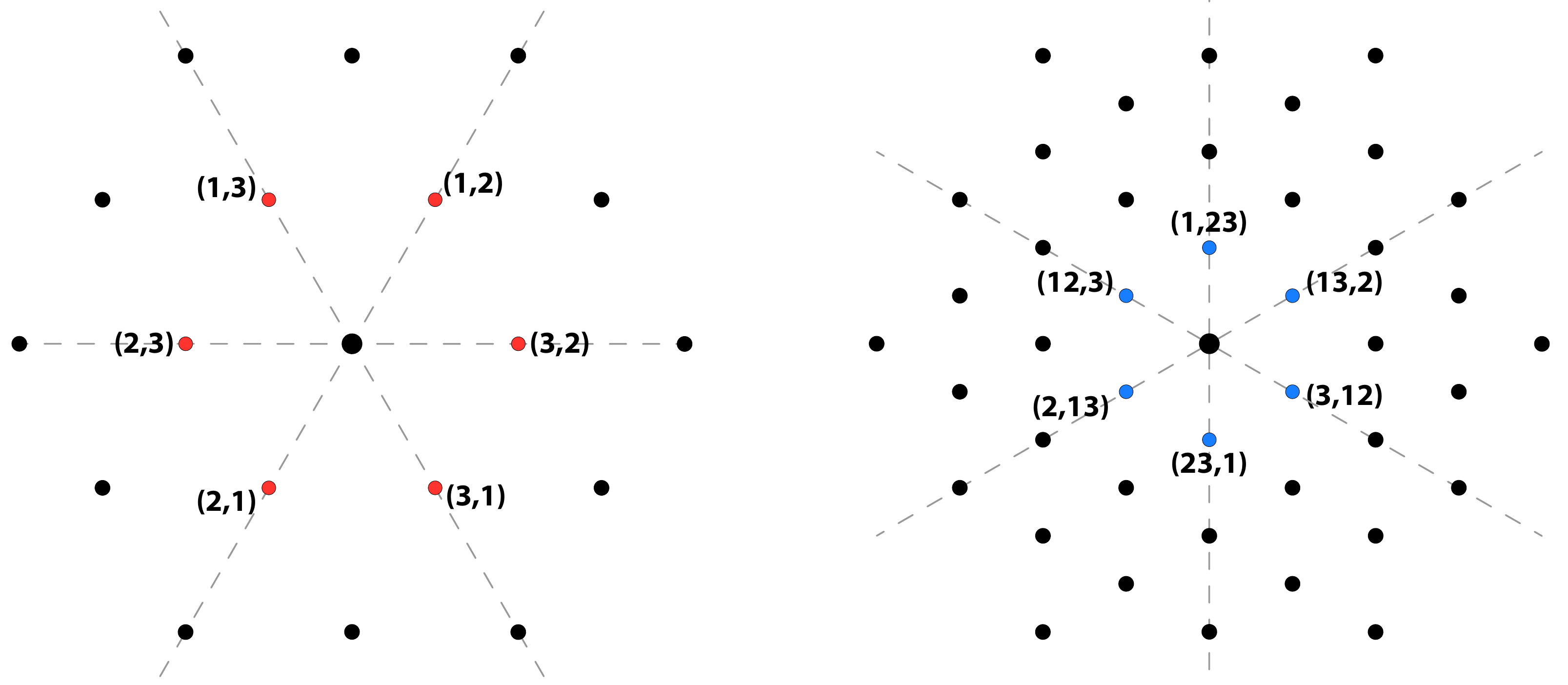}
	\caption{Coroots (red, left) and the coweight lattice, fundamental weights (blue, right) and the weight lattice, for $I=\{1,2,3\}$. Special hyperplanes and reflection hyperplanes are shown as dotted lines.}
	\label{fig:coroots}
\end{figure} 

\noindent We have the natural identifications
\[   \bR^I=\Hom( \bR I, \bR ),\qquad       \text{T}^I=\Hom(  \text{T}^\vee_I, \bR  ), \qquad  \text{P}^I=\Hom(  \text{Q}^\vee_I , \bZ ).\] 
The lattice $\text{Q}^\vee_I \subset \text{T}^\vee_I$ is called the \emph{coweight lattice} of $\text{SL}_I(\bC)$. For $(i_1,i_2)\in [\textbf{2};I]$, the \emph{coroot} $h_{i_1 i_2} \in \text{Q}^\vee_I $ evaluates the signed distance between the point labeled by $i_2$ to the point labeled by $i_1$, 
\[\la h_{i_1 i_2},\lambda\ra:=\lambda(i_1)-\lambda(i_2).\] 
The partial product on $[\textbf{2};I]$ encodes the addition of coroots, restricted to the case where the sum is again a coroot,
\[      h_{i_1 i_2} +  h_{i_3 i_4}= h_{ (i_1,i_2)\, \circ\, (i_3,i_4)  }     .     \]
For $(i_1,i_2)\in [\textbf{2};I]$, the \emph{reflection hyperplane} $\cH_{(i_1|i_2)}\subset \text{T}^I$ is given by
\[      \cH_{(i_1|i_2)}  :=  \big  \{   \lambda\in \text{T}^I:  \la h_{i_1  i_2},\lambda\ra =0\big       \}              .  \]
For the set species $\text{T}^{(-)}$ given by $I\mapsto \text{T}^I$, the transposition $i_1\leftrightarrow i_2$ acts by reflection in $\cH_{(i_1|i_2)}$. The collection of all reflection hyperplanes in $\text{T}^I$ is called the \emph{braid arrangement} over $I$. For $(S,T)\in [I;\textbf{2}]$, the \emph{special hyperplane} $\cH_{(S|T)}\subset \text{T}^\vee_I$ is given by
\[        \cH_{(S|T)}:=  \big  \{  h\in \text{T}^\vee_I:  \la h,\lambda_{ST}\ra =0\big       \}              . \]
The collection of all special hyperplanes in $\text{T}^\vee_I$ is called the \emph{adjoint braid arrangement} over $I$. Notice that special hyperplanes are equivalently hyperplanes which are spanned by coroots, whereas fundamental weights can span hyperplanes which are not necessarily reflection hyperplanes (this can be seen in \autoref{fig:convexhulls}). 

The significance of the hyperplanes is as follows; if $\lambda\in \cH_{(i_1|i_2)}$, then the points of $i_1$ and $i_2$ coincide, and if $h\in  \cH_{(S|T)}$, then the function
\[    h:  \text{T}^I\to \bR, \qquad       \lambda\mapsto \la h,\lambda \ra \]
does not depend upon the distances separating the points of $S$ from the points of $T$.

\begin{remark}
The set species $\text{T}^{(-)}$ is a set-theoretic comonoid, with comultiplication the restriction of configurations,
\[   
\Delta_{S,T}:  \text{T}^I\twoheadrightarrow \text{T}^S\times   \text{T}^T
,\qquad   
\Delta_{S,T}(\lambda):=(\lambda|_S, \lambda|_T)   
.\]
Dually, the set cospecies $\text{T}^\vee_{(-)}$ given by $I\mapsto \text{T}^\vee_I$ is a monoid, with multiplication the embedding of special hyperplanes,
\[  
 \mu_{S,T}:     \text{T}^\vee_S\times \text{T}^\vee_T   \hookrightarrow \text{T}^\vee_I , \qquad      \big \la \mu_{ST}(h_S, h_T),\lambda\big \ra:=\la h_S,\lambda|_S\ra + \la h_T,\lambda|_T\ra  
.\]
To make the comonoid $\text{T}^{(-)}$ into a Hopf monoid, one needs to take a certain compactification so that the multiplication has somewhere to land (see the discussion on the Losev-Manin moduli space in the \hyperref[intro]{Introduction}).
\end{remark}

\subsection{Realization over Braid Arrangement}\label{braid}

The vector space
\[\Bbbk^{\text{T}^I}:= \{   \text{functions}\  \text{T}^I\to \Bbbk  \}    \] 
is a $\Bbbk$-algebra, with multiplication the pointwise product of functions. For $(i_1,i_2)\in [\textbf{2};I]$, define the \emph{halfspace} $\hat{\tC}_{i_1 i_2}\in \Bbbk^{\text{T}^I}$ by
\[ \hat{\tC}_{i_1 i_2}: \text{T}^I \to \Bbbk , \qquad  \hat{\tC}_{i_1 i_2}(\lambda) :=
\begin{cases}
 1  &\quad  \text{if}\  \la h_{i_1 i_2}, \lambda \ra \geq 0      \\
0  &\quad \text{otherwise.}
\end{cases}
\]
Let $\shuff[I]$ denote the subalgebra of $\Bbbk^{\text{T}^I}$ which is generated by halfspaces. This defines the vector cospecies $\shuff$ of piecewise-constant functions on the braid arrangement. Each component $\shuff[I]$ is a polyhedral algebra in the sense of \cite{MR1731815}. Monomials in the halfspaces are called \emph{braid cones}. We let preposets index braid cones via
\[         \text{O}[I]\to   \shuff[I], \qquad   p\mapsto  \hat{\tC}_{p}  : =\prod_{(i_1,i_2)\in p}   \hat{\tC}_{i_1 i_2}    .       \]
This is the well-known one-to-one correspondence between preposets and cones of the braid arrangement \cite[Section 3]{vic}, \cite[Section 13.5.1]{aguiar2010monoidal}.	
Let the \emph{braid signature} be the function
\[  \text{T}^I\twoheadrightarrow \Sigma^\ast[I], \qquad   \lambda\mapsto F_\lambda:=  \big \{    (i_1,i_2)\in [\textbf{2};I]   :  \hat{\tC}_{i_1 i_2}(\lambda)=1  \big \}.          \]
Equivalently, $F_\lambda$ is the preposet which is the ordering induced on $I$ by representative configurations $\lambda:I\to \bR$. For $F\in \Sigma^\ast[I]$, the (relatively open) \emph{face} $\hat{\tM}_F$ is the function given by
\[      
\hat{\tM}_F:\text{T}^I\to \Bbbk, \qquad      \hat{\tM}_F(\lambda):=
\begin{cases}
 1  &\quad  \text{if}\  F_\lambda=F         \\
0  &\quad \text{otherwise}.   
\end{cases}
\]
The image under the braid signature of the complement of the reflection hyperplanes is $\text{L}^\ast[I]$. Thus, $F\mapsto \hat{\tM}_F$ puts $\text{L}^\ast$ in one-to-one correspondence with characteristic functions of connected components of the complement of the reflection hyperplanes.  

\begin{prop}\label{prop:rel}
We have 
\[ \hat{\tC}_{p} = \sum_{  F\leq p } \hat{\tM}_F.\]
\end{prop}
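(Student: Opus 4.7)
The plan is to verify the identity pointwise on $\text{T}^I$; since both sides are functions $\text{T}^I \to \Bbbk$, it suffices to show they evaluate identically at every $\lambda \in \text{T}^I$. The key tool is the braid signature $\lambda \mapsto F_\lambda \in \Sigma^\ast[I]$, which translates the halfspace conditions defining $\hat{\tC}_p$ into the combinatorial condition $F_\lambda \leq p$.

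First I would unpack the left-hand side. By the definition of $\hat{\tC}_p$ as a product of halfspaces, together with the fact that each $\hat{\tC}_{i_1 i_2}$ takes values in $\{0,1\}$, we have
\[
\hat{\tC}_p(\lambda) = \prod_{(i_1,i_2)\in p} \hat{\tC}_{i_1 i_2}(\lambda) = 1 \quad \iff \quad \hat{\tC}_{i_1 i_2}(\lambda) = 1 \text{ for every } (i_1,i_2)\in p.
\]
By the defining property of the braid signature, this condition is exactly $p \subseteq F_\lambda$, which, under the convention $p\leq q \iff q \subseteq p$ from \autoref{preposet}, is $F_\lambda \leq p$.

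Next I would unpack the right-hand side. Since the faces $\hat{\tM}_F$, as $F$ ranges over $\Sigma^\ast[I]$, are (by construction) the characteristic functions of the fibers of the braid signature, exactly one term in $\sum_{F\leq p} \hat{\tM}_F(\lambda)$ can be nonzero at $\lambda$, namely the one with $F = F_\lambda$, and this term equals $1$ precisely when $F_\lambda \leq p$ and $0$ otherwise. Comparing the two computations gives $\hat{\tC}_p(\lambda) = \sum_{F\leq p} \hat{\tM}_F(\lambda)$ for all $\lambda$, which is the claim.

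There is really no serious obstacle; the only thing to double-check is the bookkeeping between the two conventions $p \subseteq F_\lambda$ versus $F_\lambda \leq p$, and the fact that $F_\lambda$ itself indeed lies in $\Sigma^\ast[I]$, i.e.\ is a total preposet (which follows because $\hat{\tC}_{i_1 i_2}(\lambda) + \hat{\tC}_{i_2 i_1}(\lambda) \geq 1$ for every $\lambda$ and every pair $(i_1,i_2)$, since at least one of the inequalities $\langle h_{i_1 i_2},\lambda\rangle \geq 0$ or $\langle h_{i_2 i_1},\lambda\rangle \geq 0$ must hold).
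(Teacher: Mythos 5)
Your argument is correct and matches the paper's proof step for step: evaluate pointwise, translate $\hat{\tC}_p(\lambda)=1$ into $F_\lambda\leq p$ via the braid signature, and then use that the faces $\hat{\tM}_F$ are characteristic functions of the signature fibers so that at most one term of the sum is nonzero. The extra check that $F_\lambda$ is a total preposet is a harmless addition that the paper leaves implicit.
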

\begin{proof}
Let $\lambda\in \text{T}^I$. We have
\[   \hat{\tC}_{p}(\lambda)    =\prod_{ (i_1,i_2)\in  p }    \hat{\tC}_{i_1 i_2} (\lambda)=   1 \quad \iff  \quad  \hat{\tC}_{i_1 i_2} (\lambda)=1\quad \text{for all}\ \   (i_1,i_2)\in  p \quad\iff\quad F_\lambda \leq p .  \]
The support of a face $\hat{\tM}_F$ is the preimage of $F$ under the braid signature. Therefore $\lambda$ is in the support of exactly one face, and so
\[     F_\lambda \leq p \quad \iff\quad    \sum_{  F\leq p } \hat{\tM}_F(\lambda) =1.\] 
Then, since $\hat{\tC}_{p}$ and $\sum_{  F\leq p } \hat{\tM}_F$ take values $0$ and $1$ only, the result follows.  
\end{proof}

The set $\{    \hat{\tM}_F  : F\in \Sigma^\ast[I]  \}$ spans $\shuff[I]$ by \autoref{prop:rel}, and is linearly independent because the faces $\hat{\tM}_F$ are supported by disjoint sets. Therefore we have an isomorphism of cospecies, given by
\[   \Sig^\ast\to  \shuff, \qquad  \tM_F\mapsto \hat{\tM}_F \quad \text{or}\quad  \tC_p\mapsto \hat{\tC}_p  .      \]
We let this isomorphism induce the structure of the commutative Hopf algebra of compositions on the cospecies of functions $\shuff$. If we extend to the permutohedral compactification of $\text{T}^I$, we may interpret the algebraic structure as in \autoref{fig:multcomult}, i.e. in terms of embedding facets and projecting onto facets of the permutohedron. 

Recall that a (closed) \emph{conical space} $\gs$ is a module of the rig $(\bR_{\geq 0}, +, \times)$, and an \emph{open conical space} is a module of the rig $(\bR_{>0}, +, \times)$. For $X\subseteq [I,\textbf{2}]$ any subset, let $ \wt{\gs}_{X}, \gs_{X} \subset \text{T}^I$ denote the respectively open and closed conical spaces which are generated by the fundamental weights \[\{   \lambda_{ST}: (S,T)\in X    \}.\] 
This defines a Galois insertion, with adjoint families as the closed elements. Therefore adjoint families are in one-to-one correspondence with open/closed conical spaces over fundamental weights. Recall that to each preposet $p\in \text{O}[I]$, we associated the subset denoted $\cF_p\subseteq [I,\textbf{2}]$. In this case, let
\[  \gs_{p}:= \gs_{\cF_p}   = \big \{ \text{non-negative $\bR$-linear combinations of}\ \{\lambda_{ST}:(S,T)\leq p\}\big \}   .\] 

\begin{prop}\label{mainthm1}
The braid cone $\hat{\tC}_p$ is the characteristic function of the conical space $\sigma_{p}$. 
\end{prop}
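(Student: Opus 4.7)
The plan is to prove the set equality $C_p = \sigma_p$, where $C_p := \{\lambda \in \text{T}^I : \langle h_{i_1 i_2}, \lambda \rangle \geq 0 \text{ for all } (i_1,i_2) \in p\}$ is the support of $\hat{\tC}_p$ by definition. Since both are cones, this is the Minkowski--Weyl duality between the inequality description (via coroots $h_{i_1 i_2}$) and the vertex description (via fundamental weights $\lambda_{ST}$), and I will verify both inclusions directly.

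For $\sigma_p \subseteq C_p$, I check that each generator $\lambda_{ST}$ with $(S,T) \leq p$ satisfies $\langle h_{i_1 i_2}, \lambda_{ST}\rangle \geq 0$ for every $(i_1,i_2) \in p$. Using a representative in $\bR^I$, this pairing equals $\lambda_S(i_1) - \lambda_S(i_2)$, which by case analysis on the membership of $i_1,i_2$ in $S$ or $T$ is always $\geq 0$ except in the single case $i_1 \in T$, $i_2 \in S$, where it equals $-1$. But this bad case is precisely excluded by $(S,T) \leq p$, i.e.\ $p \subseteq (S,T)$ as subsets of $[\textbf{2};I]$: unpacking the definition of $(S,T) \in \Sigma^\ast[I]\subseteq \text{O}[I]$, the condition says exactly that $S$ is upward closed under $\geq_p$.

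For $C_p \subseteq \sigma_p$, given $\lambda \in C_p$, pick a representative function $\lambda : I \to \bR$. The defining inequalities say $\lambda(i_1) \geq \lambda(i_2)$ whenever $(i_1,i_2) \in p$, so $\lambda$ is order-preserving with respect to $\geq_p$. Let $v_1 > v_2 > \cdots > v_k$ be the distinct values of $\lambda$ and set $S_j := \lambda^{-1}(v_j)$. Then in $\bR^I$ one has the telescoping identity
\[
\lambda \;=\; v_k\,\lambda_I \;+\; \sum_{j=1}^{k-1}(v_j - v_{j+1})\,\lambda_{S_1 \sqcup \cdots \sqcup S_j},
\]
and passing to $\text{T}^I = \bR^I/\bR\lambda_I$ kills the $v_k\lambda_I$ term, leaving a non-negative combination of fundamental weights $\lambda_{S_1\sqcup\cdots\sqcup S_j,\, S_{j+1}\sqcup\cdots\sqcup S_k}$. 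It remains to check each of these two-lump compositions is $\leq p$: if $i_2 \in S_1\sqcup\cdots\sqcup S_j$ and $i_1 \geq_p i_2$, then $\lambda(i_1) \geq \lambda(i_2) \geq v_j$, forcing $i_1 \in S_1\sqcup\cdots\sqcup S_j$. This is the required upward closure, and expresses $\lambda \in \sigma_p$.

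There is no real obstacle; the only subtlety is keeping straight the two opposite-direction orderings on preposets (the paper writes $q \leq p$ for $p \subseteq q$, and writes $(S,T) \leq p$ for two-lump compositions under the embedding $\Sigma^\ast \hookrightarrow \text{O}$) and matching them against ``$S$ is upward closed under $\geq_p$.'' Once those dictionaries are set, each inclusion is a short calculation.
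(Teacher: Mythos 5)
Your proof is correct, and it takes essentially the same route as the paper's, with one useful difference in presentation. The paper first disposes of the case $p = F$ a composition (asserted as ``clear''), then reduces the general case to it by observing $\lambda_0 \in \sigma_{F_{\lambda_0}} \subseteq \sigma_p$ whenever $F_{\lambda_0} \leq p$. You instead prove the reverse inclusion directly for arbitrary $p$ by exhibiting, via the telescoping identity, an explicit non-negative combination of the fundamental weights $\lambda_{S_1\sqcup\cdots\sqcup S_j,\, S_{j+1}\sqcup\cdots\sqcup S_k}$ that represents $\lambda$, and then verifying each of those two-lump compositions is $\leq p$ by the upward-closure argument. Unwinding the paper's proof, the ``clear'' composition case needs exactly your telescoping argument, and the $S_j$ you construct are precisely the lumps of $F_\lambda$, so the two proofs are really the same argument with the reduction step inlined; your version is the more self-contained. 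Your forward inclusion also checks the inequalities $\langle h_{i_1 i_2}, \lambda_{ST}\rangle \geq 0$ by hand, where the paper gets it in one line from $F_{\lambda_{ST}} = (S,T)$ and the observation that the support of $\hat{\tC}_p$ is a convex cone; both are fine, the paper's being slightly slicker and yours slightly more explicit. No gaps.
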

\begin{proof}
Since
\[F_{\lambda_{ST}}=(S,T),\] 
we have $(S,T)\leq p$ if and only if $\hat{\tC}_p(\lambda_{ST})=1$. Then, since the support of $\hat{\tC}_p$ must be closed under taking non-negative linear combinations, we have that $\sigma_{p}$ is contained in the support of $\hat{\tC}_p$. Conversely, suppose that $\lambda_0\in \text{T}^I$ is in the support of $\hat{\tC}_p$, which means that $F_{\lambda_0}\leq p$. The proposition clearly holds when $p$ is a composition, i.e. we have
\[  \gs_{F}= \{   \lambda\in \text{T}^I: F_\lambda \leq F  \}.\] 
Therefore, in particular, $\lambda_0\in \gs_{F_{\lambda_0}}$. Then, since $F_{\lambda_0}\leq p$, we have $(S,T)\leq F_{\lambda_0}$ implies that $(S,T)\leq p$, and so $\lambda_0\in\gs_{F_{\lambda_0}}\subseteq \gs_p$.
\end{proof}


It is now clear that $\cF_p$ is an adjoint family, since an equivalent definition of $\cF_p$ is that it is the set of $(S,T)\in [I,\textbf{2}]$ such that $\lambda_{ST}$ is in the support of $\hat{\tC}_p$. We clearly have 
\[ \gs_q \subseteq \gs_p  \iff q\leq p  \qquad \text{and} \qquad  \wt{\gs}_q \subseteq \wt{\gs}_p \iff q\preceq p.          \]
Thus, the images of $\tM_F$ and $\tC_p$ in $\hat{\Sig}^\ast$ are the characteristic functions of $\wt{\gs}_F$ and $\gs_p$ respectively. Notice also that $\gs_p \cap \gs_q =\gs_{p\,\cup\, q}$. Therefore the polyhedral algebraic structure of $\shuff$ may be given by
\[   \shuff \otimes_{\text{Had}} \shuff \to \shuff, \qquad   \hat{\tC}_p\otimes  \hat{\tC}_q \mapsto \hat{\tC}_p \cdot \hat{\tC}_q  =   \hat{\tC}_{p\, \cup\, q} .      \]
Recall that $\otimes_{\text{Had}}$ denotes the Hadamard monoidal product of vector (co)species.

\subsection{Adjoint Braid Arrangement}  \label{adjoint}
The vector space 
\[\Bbbk^{\text{T}^\vee_I}:= \{   \text{functions}\  \text{T}^\vee_I\to \Bbbk  \}     \] 
is a $\Bbbk$-algebra, with multiplication the pointwise product of functions. For $(S,T)\in [I;\textbf{2}]$, define the \emph{halfspace} $\check{\mathtt{Y}}_{ST}\in \Bbbk^{\text{T}^\vee_I}$ by 
\[ \check{\mathtt{Y}}_{ST}:  \text{T}^\vee_I\to \Bbbk , \qquad  \check{\mathtt{Y}}_{ST}(h) :=
\begin{cases}
 1  &\quad  \text{if}\ \la h, \lambda_{ST}   \ra\geq 0       \\
0  &\quad \text{otherwise.}
\end{cases}
\]
Let $\check{\Sig}^\vee[I]$ denote the subalgebra of $\Bbbk^{\text{T}^\vee_I}$ which is generated by halfspaces. This defines the vector species $\check{\Sig}^\vee$ of piecewise-constant functions on the adjoint braid arrangement. Each component $\check{\Sig}^\vee[I]$ is a polyhedral algebra in the sense of \cite{MR1731815}. Let an \emph{adjoint cone} be a monomial in the halfspaces. We let adjoint families index adjoint cones via 
\[       \text{O}^\vee[I]\to  \check{\Sig}^\vee[I], \qquad \cF\mapsto \check{\mathtt{Y}}_{\cF}:= \prod_{(S,T)\in \cF}    \check{\mathtt{Y}}_{ST}      .  \] 
The definition of adjoint families (as the closed elements of a Galois insertion) ensures that this is a one-to-one correspondence between adjoint families on $I$ and adjoint cones of $ \text{T}^\vee_I$.
Let the \emph{adjoint signature} be the function
\[  \text{T}^\vee_I\twoheadrightarrow \Sigma^\vee[I], \qquad   h\mapsto \cS_h:=  \big \{    (S,T)\in [I;\textbf{2}]   :  \check{\mathtt{Y}}_{ST}(h)=1  \big \}.          \]
For $\cS\in \Sigma^\vee[I]$, the (relatively open) \emph{adjoint face} $\check{\mathtt{H}}_\cS$ is the function given by
\[     \check{\mathtt{H}}_\cS:  \text{T}^\vee_I\to \Bbbk , \qquad  \check{\mathtt{H}}_\cS(h):=
\begin{cases}
 1  &\quad  \text{if}\  \cS_h=\cS         \\
0  &\quad \text{otherwise}.
\end{cases}\]     
The image under the adjoint signature of the complement of the special hyperplanes is $\text{L}^\vee[I]$. Thus, $\cS\mapsto\check{\mathtt{H}}_\cS$ puts $\text{L}^\vee$ in one-to-one correspondence with characteristic functions of connected components of the compliment of the special hyperplanes. 

\begin{prop} \label{rel2}
We have
\[    \check{\mathtt{Y}}_{\cF}=  \sum_{ \cS\leq \cF  }   \check{\mathtt{H}}_\cS . \]
\end{prop}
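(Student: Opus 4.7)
The proof plan is to mimic the proof of \autoref{prop:rel} verbatim, replacing the braid signature with the adjoint signature. Both sides are $\{0,1\}$-valued functions on $\text{T}^\vee_I$, so it suffices to show they have the same support.

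First, I would unwind the left-hand side. By definition of the halfspaces $\check{\mathtt{Y}}_{ST}$ and the pointwise product, for $h \in \text{T}^\vee_I$,
\[
\check{\mathtt{Y}}_\cF(h) = \prod_{(S,T)\in \cF} \check{\mathtt{Y}}_{ST}(h) = 1 \quad \iff \quad \check{\mathtt{Y}}_{ST}(h) = 1 \ \text{for all}\ (S,T)\in \cF \quad \iff \quad \cF \subseteq \cS_h,
\]
and the last condition is precisely $\cS_h \leq \cF$. Note that $\cS_h$ is automatically total (since $\lambda_{TS} \equiv -\lambda_{ST}$ in $\text{T}^I$, so at least one of $\langle h, \lambda_{ST}\rangle$ and $\langle h, \lambda_{TS}\rangle$ is nonnegative), and is indeed an adjoint family (non-negativity of $\langle h, -\rangle$ is preserved under non-negative linear combinations of fundamental weights), so $\cS_h\in\Sigma^\vee[I]$ as required.

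Next, I would analyze the right-hand side. By construction, the adjoint faces $\{\check{\mathtt{H}}_\cS : \cS \in \Sigma^\vee[I]\}$ are supported on the fibers of the adjoint signature $h \mapsto \cS_h$, and hence have pairwise disjoint supports whose union is all of $\text{T}^\vee_I$. Therefore
\[
\sum_{\cS \leq \cF} \check{\mathtt{H}}_\cS(h) = 1 \quad \iff \quad \cS_h \leq \cF,
\]
and the sum equals $0$ otherwise. Comparing with the characterization of $\check{\mathtt{Y}}_\cF(h)$ above, the two sides agree pointwise.

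I do not anticipate a genuine obstacle here: the argument is formally dual to \autoref{prop:rel}, and the only subtlety worth flagging is checking that the adjoint signature indeed lands in $\Sigma^\vee[I]$ (i.e., that $\cS_h$ is total and closed under the relevant non-negative linear combinations), which follows immediately from linearity of $\langle h, -\rangle$ together with $\lambda_{ST} + \lambda_{TS} = 0$ in $\text{T}^I$.
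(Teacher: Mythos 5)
Your proof is correct and follows essentially the same line as the paper's: unwind both sides as $\{0,1\}$-valued functions, show both have support $\{h : \cS_h \leq \cF\}$, and use disjointness of the adjoint faces' supports. The extra remark verifying that $\cS_h$ actually lands in $\Sigma^\vee[I]$ is a sensible sanity check, though the paper builds that into the definition of the adjoint signature.
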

\begin{proof}
Let $h\in \text{T}^\vee_I$. We have
\[   \check{\mathtt{Y}}_{\cF}(h)    =\prod_{ (S,T)\in  \cS } \check{\mathtt{Y}}_{ST} (h)=   1 \quad \iff  \quad  \check{\mathtt{Y}}_{ST} (h)=1 \quad \text{for all}\ \  (S,T)\in  \cF \quad\iff\quad \cS_h \leq \cF .  \]
The support of $\check{\mathtt{H}}_\cS$ is the preimage of $\cS$ under the adjoint signature. Therefore $h$ is in the support of exactly one adjoint face, and so
\[     \cS_h \leq \cF \quad \iff\quad    \sum_{  \cS\leq \cF } \check{\mathtt{H}}_\cS(h) =1.\] 
Then, since $\mathtt{Y}_\cF$ and $\sum_{  \cS\leq \cF } \check{\mathtt{H}}_\cS$ take values $0$ and $1$ only, the result follows.  
\end{proof}

For $\cF\in \text{O}^\vee[I]$ an adjoint family, let $\sigma^\vee_{\cF}$ denote the dual conical space of $\sigma_{\cF}$, given by
\[       \sigma^\vee_{\cF}:=    \big \{  h \in \text{T}^\vee_I :  \la  h, \lambda   \ra \geq 0 \   \text{ for all }  \lambda \in \sigma_{\cF}  \big   \}    .      \]

\begin{prop} 
The adjoint cone $\check{\mathtt{Y}}_{\cF}$ is the characteristic function of the conical space $\sigma^\vee_{\cF}$. 
\end{prop}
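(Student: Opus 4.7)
The plan is to observe that both $\check{\mathtt{Y}}_{\cF}$ and the characteristic function of $\sigma^\vee_{\cF}$ take only the values $0$ and $1$, so it suffices to show they have the same support. That is, for any $h \in \text{T}^\vee_I$, I want to establish the equivalence
\[
\check{\mathtt{Y}}_{\cF}(h) = 1 \quad \iff \quad h \in \sigma^\vee_{\cF}.
\]

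For the forward direction, suppose $h \in \sigma^\vee_{\cF}$. By definition of $\sigma_{\cF}$, each fundamental weight $\lambda_{ST}$ with $(S,T) \in \cF$ lies in $\sigma_{\cF}$, so $\langle h, \lambda_{ST}\rangle \geq 0$ for all such $(S,T)$. Hence $\check{\mathtt{Y}}_{ST}(h) = 1$ for every $(S,T) \in \cF$, and consequently $\check{\mathtt{Y}}_{\cF}(h) = \prod_{(S,T) \in \cF} \check{\mathtt{Y}}_{ST}(h) = 1$.

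For the reverse direction, suppose $\check{\mathtt{Y}}_{\cF}(h) = 1$, which by definition of the product means $\langle h, \lambda_{ST}\rangle \geq 0$ for every $(S,T) \in \cF$. Any $\lambda \in \sigma_{\cF}$ can be written as a non-negative real combination $\lambda = \sum_{j} c_j \lambda_{S_j T_j}$ with $c_j \geq 0$ and $(S_j, T_j) \in \cF$, by the very definition of $\sigma_{\cF}$ as the conical hull of these fundamental weights. By linearity of the pairing,
\[
\langle h, \lambda\rangle = \sum_{j} c_j \langle h, \lambda_{S_j T_j}\rangle \geq 0,
\]
so $h \in \sigma^\vee_{\cF}$.

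There is no real obstacle here; the proof is a direct unfolding of the definitions combined with the bilinearity of $\langle -, - \rangle$ and the fact that $\sigma_{\cF}$ is by construction the conical hull of $\{\lambda_{ST} : (S,T) \in \cF\}$. The only thing worth emphasizing is why checking the generators $\lambda_{ST}$ suffices to test membership in the dual cone of all of $\sigma_{\cF}$, which is precisely the content of the linearity argument above.
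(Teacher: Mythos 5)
Your proof is correct. The paper dispatches this in one line by invoking the general principle that cone duality intertwines intersections with Minkowski sums: $\check{\mathtt{Y}}_{\cF}$ is the characteristic function of the intersection $\bigcap_{(S,T)\in\cF}\{h : \langle h,\lambda_{ST}\rangle\geq 0\}$, each factor of which is the dual cone of the ray $\bR_{\geq 0}\lambda_{ST}$, and the dual of a Minkowski sum is the intersection of the duals. What you have done is prove precisely that instance of the principle by hand: your forward direction is the containment "dual of the sum $\subseteq$ intersection of the duals" (because each generator lies in $\sigma_\cF$), and your reverse direction is the opposite containment (because every element of $\sigma_\cF$ is a non-negative combination of generators, and the pairing is linear). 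So this is the same route, simply unpacked rather than cited; the unpacking is clean and there is nothing missing.
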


\begin{proof}
The result follows from the fact that cone duality intertwines intersections with Minkowski sums.
\end{proof}


For $p\in \text{O}[I]$ a preposet, let $\sigma^\vee_{p}$ denote the dual conical space of $\sigma_{p}$, given by
\[       \sigma^\vee_{p}:=    \big \{  h\in \text{T}^\vee_I :  \la  h, \lambda   \ra \geq 0 \quad \text{ for all }\ \  \lambda \in \sigma_{p}  \big   \}    .      \]
The conical spaces $\sigma^\vee_F$ are tangent cones to permutohedra, and the conical spaces $\sigma^\vee_p$ are tangent cones to generalized permutohedra. This follows from the characterization of generalized permutohedra as polyhedra whose normal fans are coarsenings of the braid arrangement, see e.g. \cite{aguiar2017hopf}. By cone duality, the conical space $\sigma^\vee_{p}$ is generated by the coroots 
\[\big \{h_{i_1 i_2} : (i_1, i_2)\in p\big \}.\] 
Let $\wt{\sigma}^\vee_p$ denote the open conical space which is generated by the same coroots. 

The set $\{   \check{\mathtt{H}}_\cS: \cS\in \Sigma^\vee[I]   \}$ spans $\check{\Sig}^\vee[I]$ by \autoref{rel2}, and is linearly independent because the adjoint faces $\check{\mathtt{H}}_\cS$ are supported by disjoint sets. Let us give each component $\check{\Sig}^\vee[I]$ the structure of a real Hilbert space by letting adjoint faces be an orthonormal basis. If $\tY\in \check{\Sig}^\vee[I]$ is the characteristic function of a region $X\subset \text{T}^\vee_I$, let the \emph{characteristic functional} of $X$ be the Riesz representation of $\tY$. 

\subsection{Realization over Adjoint Braid Arrangement}  \label{2adjoint}

For $p \in \text{O}[I]$, let $\check{\tC}_{p}$ be the characteristic functional of $\sigma^\vee_p$, thus
\[  \check{\tC}_{p}: \check{\Sig}^\vee[I]\to \Bbbk, \qquad \check{\tC}_{p}( \check{\mathtt{H}}_\cS ) :=
\begin{cases}
 1  &\quad  \text{if}\    \cS \leq  \CMcal{F}_p      \\
0  &\quad \text{otherwise.}
\end{cases}\] 
Let $\adshuff[I]$ denote the span of $\{ \check{\tC}_{p}:p\in \text{O}[I] \}$ in $\Hom(\check{\Sig}^\vee[I], \Bbbk)$. This defines the cospecies $\adshuff$ of functionals which arise from formal linear combinations of conical spaces over coroots. 

\begin{prop}
We have a cospecies isomorphism $\mathcal{D}$, given by
\[  \mathcal{D}:  \shuff \to \adshuff, \qquad    \hat{\tC}_p\mapsto      \check{\tC}_p   .               \]
\end{prop}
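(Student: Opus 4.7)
The plan is to verify three things: that the rule $\hat{\tC}_p \mapsto \check{\tC}_p$ extends to a well-defined linear map on each component $\shuff[I] \to \adshuff[I]$, that this linear map is a bijection, and that the family of these maps is natural with respect to bijections $\tau: J \to I$ of $\sS$.

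The crucial point is well-definedness, for which I would invoke the polyhedral algebra duality cited in the Introduction (Theorem~2.7 of \cite{MR1731815}). The key geometric input is already in place: by \autoref{mainthm1} the function $\hat{\tC}_p$ is the characteristic function of the closed cone $\sigma_p \subseteq \text{T}^I$, while by the preceding proposition $\check{\tC}_p$ is the characteristic functional of the dual cone $\sigma^\vee_p \subseteq \text{T}^\vee_I$. Cone duality on closed convex cones containing the origin is an involution which intertwines intersection with Minkowski sum and reverses containment of faces, and descends to an isomorphism of the underlying polyhedral algebras carrying the class of $\hat{\tC}_p$ to the class of $\check{\tC}_p$. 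In particular, any linear relation $\sum_{p} c_p \hat{\tC}_p = 0$ in $\shuff[I]$ forces the corresponding relation $\sum_{p} c_p \check{\tC}_p = 0$ in $\adshuff[I]$, so $\mathcal{D}$ is well defined on each component.

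Bijectivity then follows easily. Surjectivity is immediate from the definition of $\adshuff[I]$ as the span of the $\check{\tC}_p$. For injectivity, the inverse assignment $\check{\tC}_p \mapsto \hat{\tC}_p$ is well defined by the same duality argument applied in reverse, using $\sigma^{\vee\vee} = \sigma$ for the closed convex cone $\sigma_p$, and plainly furnishes a two-sided inverse.

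Naturality is a routine functoriality check. A bijection $\tau: J \to I$ acts on preposets by relabeling and on coordinate spaces via the induced isomorphisms $\bR^I \cong \bR^J$ and $\bR I \cong \bR J$, sending coroots to coroots and fundamental weights to fundamental weights. Hence $\tau$ transports $\sigma_p$ and $\sigma^\vee_p$ to the analogous cones for the relabeled preposet, and the square witnessing $\mathcal{D}$ as a morphism of cospecies commutes automatically. The main obstacle is well-definedness, which rests on the non-trivial polyhedral duality; with that in hand, bijectivity and naturality are formal.
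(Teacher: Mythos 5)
Your proof is correct and follows essentially the same approach as the paper: both reduce the statement to the polyhedral algebra duality of \cite[Theorem 2.7]{MR1731815}, using that $\hat{\tC}_p$ is the characteristic function of the closed cone $\sigma_p$ and the Riesz representation of $\check{\tC}_p$ is the characteristic function of the dual cone $\sigma^\vee_p$. The paper's proof is a one-liner that identifies $\adshuff[I]$ with the dual polyhedral algebra of $\hat{\Sig}^\ast[I]$ and declares $\mathcal{D}$ to be the duality map; you unwind the same citation into explicit well-definedness, bijectivity, and naturality checks, all of which are routine once the duality theorem is in hand, so the substance is the same.

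One small attribution slip: you say the characterization of $\check{\tC}_p$ as the characteristic functional of $\sigma^\vee_p$ comes from a ``preceding proposition,'' but in the paper this is the \emph{definition} of $\check{\tC}_p$ (the nearby proposition concerns $\check{\mathtt{Y}}_{\cF}$ rather than $\check{\tC}_p$). This does not affect the validity of the argument.
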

\begin{proof}
Since $\hat{\tC}_p$ is the characteristic function of $\sigma_p$, and the Riesz representation of $\check{\tC}_p$ is the characteristic function of $\sigma^\vee_p$, we see that $\adshuff[I]$ is naturally the dual of the polyhedral algebra $\hat{\Sig}^\ast[I]$ in the sense of \cite[Theorem 2.7]{MR1731815}, and $\mathcal{D}$ is the duality map.
\end{proof}


We let $\mathcal{D}$ induce the structure of the commutative Hopf algebra of compositions on the cospecies of functionals $\adshuff$. The image of the pointwise product in $\adshuff$ is called \emph{convolution},
\[   \adshuff\otimes_{\text{Had}} \adshuff\to \adshuff, \qquad   \check{\tC}_p  \otimes  \check{\tC}_q \mapsto  \check{\tC}_p  \star  \check{\tC}_q :=    \check{\tC}_{ p\, \cup\, q  }.  \]
The restriction of convolution to coroot cones is Minkowski sum. 



For $F\in \Sigma^\ast[I]$, let 
\[\check{\tM}_F\in \Hom(\check{\Sig}^\vee[I], \Bbbk)\] 
be the characteristic functional of the relative interior $\wt{\sigma}^\vee_{\bar{F}}$ of the permutohedral cone $\sigma^\vee_{\bar{F}}$, with sign $(-1)^{  l(F)-1 }$.

\begin{thm}
For $F\in \Sigma^\ast[I]$, we have
\[   \mathcal{D}( \hat{\tM}_F)  = \check{\tM}_F.\]
\end{thm}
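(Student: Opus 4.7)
The plan is to combine M\"obius inversion with a pointwise evaluation on the adjoint arrangement. By \autoref{prop:rel}, $\hat{\tC}_F = \sum_{G \leq F} \hat{\tM}_G$, and the compositions $G \leq F$ form a Boolean lattice isomorphic to the power set of the $k-1$ gaps between consecutive lumps of $F = (S_1, \dots, S_k)$, where $k := l(F)$. M\"obius inversion in this Boolean lattice yields
\[
\hat{\tM}_F = \sum_{G \leq F} (-1)^{l(F)-l(G)}\, \hat{\tC}_G,
\]
and applying the cospecies isomorphism $\mathcal{D}$ gives $\mathcal{D}(\hat{\tM}_F) = \sum_{G \leq F} (-1)^{l(F)-l(G)}\, \check{\tC}_G$. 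It therefore suffices to show that this signed sum of characteristic functionals of closed dual cones equals $(-1)^{l(F)-1}$ times the characteristic functional of the open cone $\wt{\sigma}^\vee_{\bar{F}}$.

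I evaluate both sides on an arbitrary adjoint face $\check{\tH}_\cS$ by choosing any representative $h \in \text{T}^\vee_I$ in its support. Define the partial sums $f(j) := \sum_{i \in S_1 \cup \cdots \cup S_j} h_i$ for $j = 0, 1, \dots, k$, so that $f(0) = f(k) = 0$. Each $G \leq F$ is encoded by its set $B \subseteq \{1,\dots,k-1\}$ of un-merged gaps, with $l(G) = |B|+1$, and the cuts of $G$ are precisely $(S_{\leq a}, S_{>a})$ for $a \in B$; hence $h \in \sigma^\vee_G$ iff $f(a) \geq 0$ for every $a \in B$. Letting $A := \{j \in \{1,\dots,k-1\} : f(j) \geq 0\}$, the signed sum evaluated on $h$ becomes
\[
\sum_{B \subseteq A} (-1)^{k-1-|B|} \;=\; (-1)^{k-1}(1-1)^{|A|},
\]
which vanishes unless $A = \emptyset$, in which case it equals $(-1)^{l(F)-1}$.

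It remains to identify the nonvanishing locus $\{h \in \text{T}^\vee_I : f(j) < 0 \text{ for all } j = 1, \dots, k-1\}$ with $\wt{\sigma}^\vee_{\bar{F}}$. The cuts of $\bar{F} = (S_k, \dots, S_1)$ are $(S_{>k-j}, S_{\leq k-j})$ for $j = 1, \dots, k-1$, whose pairing with $h$ equals $-f(k-j)$; thus $\sigma^\vee_{\bar{F}}$ is cut out by the $k-1$ inequalities $f(j) \leq 0$. Since the fundamental weights $\lambda_{S_{\leq j}, S_{>j}}$ for $j = 1, \dots, k-1$ are linearly independent in $\text{T}^I$, these inequalities are in general position and none is redundant, so the relative interior $\wt{\sigma}^\vee_{\bar{F}}$ coincides with $\{h : f(j) < 0 \text{ for all } j\}$. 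This matches the locus on which the signed sum is nonvanishing, finishing the proof. The main obstacle is precisely this general-position step: one must confirm that the relative interior of the (possibly non-pointed) permutohedral cone $\sigma^\vee_{\bar{F}}$ is described by all $k-1$ defining inequalities being strict, after which the binomial cancellation and sign bookkeeping are routine.
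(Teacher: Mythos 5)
Your proof is correct, and it takes a genuinely different (and, I would argue, cleaner) route than the paper's. The paper's proof expresses $\hat{\tM}_F$ as a pointwise product of differences of halfspace indicators, $\hat{\tM}_F = \prod^{\text{pointwise}}_{(i_1,i_2)\in F}(\hat{\tC}_I - \hat{\tC}_{i_2 i_1})$, then pushes through the algebra map $\mathcal{D}$ (pointwise product $\mapsto$ convolution) and appeals to inclusion--exclusion on faces of the permutohedral cone. This is slick when it works, but the product formula as literally written is problematic for compositions $F$ with a lump of size $\geq 2$: the preposet $F$ contains both $(i_1,i_2)$ and $(i_2,i_1)$ for $i_1,i_2$ in a common lump, forcing a factor $(1-\hat{\tC}_{i_2 i_1})(1-\hat{\tC}_{i_1 i_2})$, which vanishes identically; restricting to $F_>$ instead yields a region that omits the equalities within lumps. (One can test $F=(I)$ directly: the literal product is $0$, but $\hat{\tM}_{(I)}$ is the indicator of the origin.) Your Möbius inversion $\hat{\tM}_F = \sum_{G\leq F}(-1)^{l(F)-l(G)}\hat{\tC}_G$ sidesteps all of this by working at the level of closed cones $\hat{\tC}_G$, which deconstruct cleanly into Boolean data on the $k-1$ gaps, and then applying only the \emph{linearity} of $\mathcal{D}$, not its multiplicativity. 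The binomial cancellation and the final identification of $\{h : f(j)<0 \text{ for all } j\}$ with $\wt{\sigma}^\vee_{\bar F}$ are carried out correctly; your flagging of the general-position step is apt — linear independence of $\lambda_{S_{\leq j},\, S_{>j}}$ (which one verifies directly, e.g.\ by triangularity against $\lambda_{S_1},\dots,\lambda_{S_k}$) is exactly what ensures the cone is full-dimensional with no redundant facet inequality, so its interior is cut out by strict inequalities and coincides with the positive span of the generating coroots. One small point worth stating explicitly: because each $\check{\tC}_G$ is constant on adjoint faces, your evaluation at a representative $h$ is well-defined on $\check{\tH}_\cS$; and because the boundary of $\sigma^\vee_{\bar F}$ lies in the special hyperplanes, the open cone $\wt{\sigma}^\vee_{\bar F}$ is a union of supports of adjoint faces, so the characteristic functional of $\wt{\sigma}^\vee_{\bar F}$ is indeed computed face-by-face as you do. Net: same underlying idea (polyhedral duality plus inclusion--exclusion), but your deployment of it via Möbius inversion and pointwise verification is more robust and fills a gap in the paper's terse argument.
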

\begin{proof}
For $F\in \Sigma^\ast[I]$, we have
\[  \mathcal{D}  (\hat{\tM}_F)=  \mathcal{D} \Bigg (     \prod^{\text{pointwise}}_{(i_1,i_2)\in F}   (\hat{\tC}_{I}-\hat{\tC}_{i_2 i_1}) \Bigg  )= \prod^{\text{convol}}_{(i_1,i_2)\in F}  \mathcal{D} (\hat{\tC}_{I}-\hat{\tC}_{i_2 i_1})= \prod^{\text{convol}}_{(i_1,i_2)\in \bar{F}}    \big(  \check{\tC}_{I} -  \check{\tC}_{i_1 i_2} \big)   = \check{\tM}_F.\footnote{\ in $\hat{\tC}_{I}$ and $\check{\tC}_{I}$, $I$ denotes the partition of $I$ into singletons $(i_1|\dots|i_n)$, modeled as the empty preposet $p=\emptyset$}   \]
The final equality follows by multiplying out the convolution product, and then doing \hbox{inclusion-exclusion} of faces of the permutohedral cone $\gs_F^\vee$.
\end{proof}


Therefore, the isomorphism $\mathcal{D}:\shuff \to \adshuff$ is also given by $\hat{\tM}_F\mapsto \check{\tM}_F$. We let $\check{\tP}_F$ denote the image of $\tP_F$ in $\adshuff$.

\section{The Indecomposable Quotient} \label{main}
\noindent We now show that the indecomposable quotient Lie coalgebra of the adjoint realization of $\Sig^\ast$ is simply the restriction of functionals to chambers. Recall that the indecomposable quotient of $\Sig^\ast$ is isomorphic to 
\[\textbf{Zie}^\ast=\textbf{Lie}^\ast\circ \textbf{E}_+,\]
where $\textbf{Lie}^\ast$ is the Lie cooperad, $\textbf{E}_+$ is the positive exponential species, and $\circ$ is plethysm. We show that the resulting geometric realization of $\textbf{Zie}^\ast$, which we know consists of functionals in the span of characteristic functionals of generalized permutohedral cones, is also characterized as functionals which satisfy the Steinmann relations from QFT. We shall see that the cobracket of this realization of $\textbf{Zie}^\ast$ may be interpreted as the discrete differentiation of functionals across special hyperplanes. 
\subsection{Permutohedral Cones and the Steinmann Relations }

Let $\check{\textbf{Z}}\textbf{ie}^\ast$ be the quotient cospecies of $\adshuff$ which is obtained by restricting functionals to adjoint chambers, thus
\[       \check{\textbf{Z}}\textbf{ie}^\ast[I] := \bigslant{\adshuff[I] }{   \big \la    f \in \adshuff[I] :    f( \check{\mathtt{H}}_\cS  )= 0\ \text{for all}\  \cS\in \text{L}^\vee[I]    \big \ra   }     .          \]
We denote the corresponding quotient map by
\[\check{\cU}^\ast: \adshuff \twoheadrightarrow  \check{\textbf{Z}}\textbf{ie}^\ast  . \] 
Define the following functionals on adjoint chambers, 
\[  \check{\tp}_F:= \check{\cU}^\ast(  \check{\tP}_F ), \qquad    \check{\tc}_p:= \check{\cU}^\ast(  \check{\tC}_p ), \qquad  \check{\tm}_F:= \check{\cU}^\ast(  \check{\tM}_F ).            \]
See \autoref{fig:qlambda}. In particular, the functionals $\check{\tc}_p$ are characteristic functionals of generalized permutohedral tangent cones, taken modulo higher codimensions. Therefore we may characterize the subspace $\check{\textbf{Z}}\textbf{ie}^\ast[I]\subset \Hom(\textbf{L}^\vee[I], \Bbbk)$ as the span of characteristic functionals of (generalized) permutohedral tangent cones.

\begin{figure}[t]
	\centering
	\includegraphics[scale=0.7]{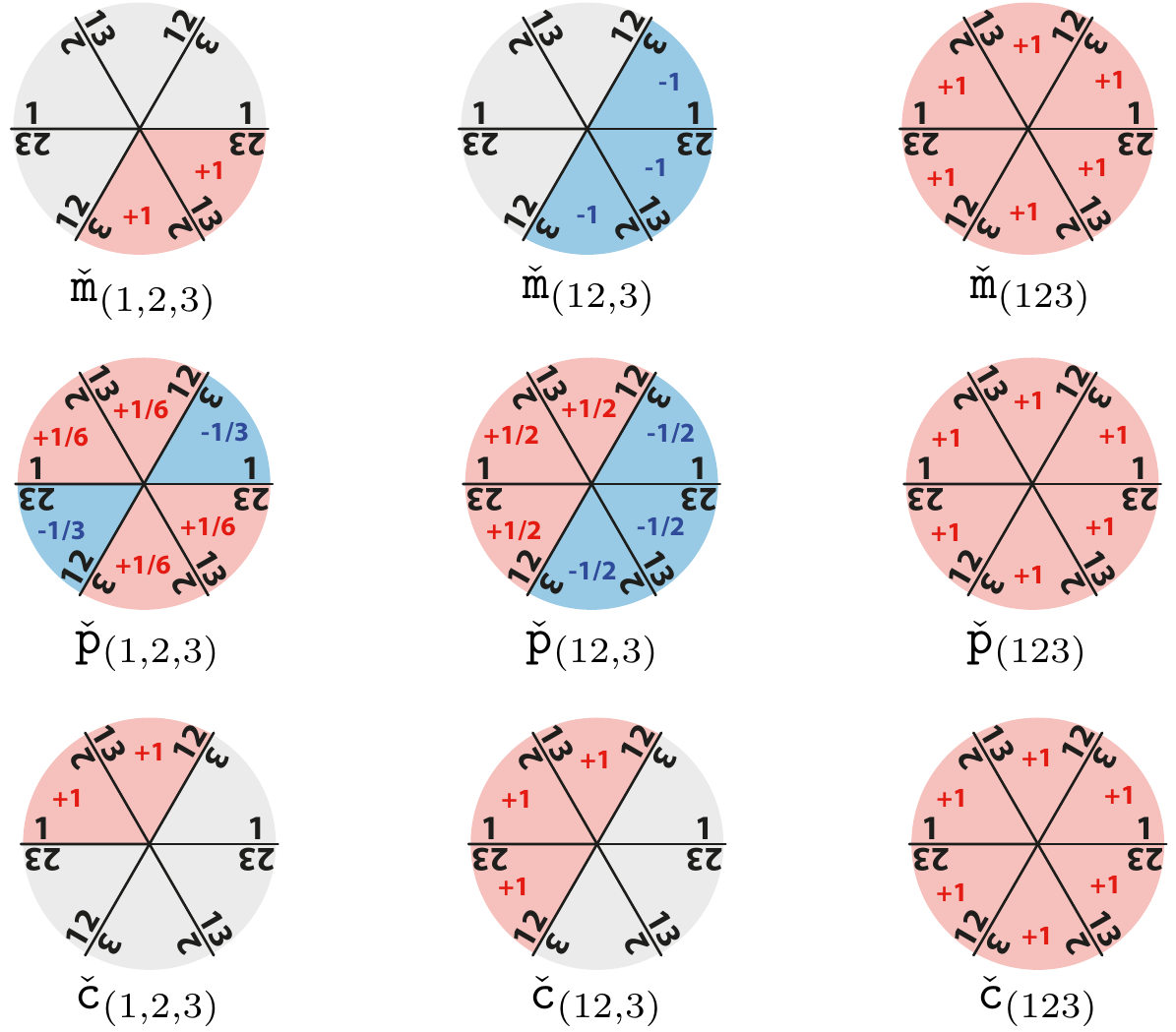}
	\caption{The images in the quotient $\check{\textbf{Z}}\textbf{ie}^\ast$ of the various bases of $\Sig^\ast$.}
	\label{fig:qlambda}
\end{figure} 

\begin{remark}\label{rem}
The adjoint analog of $\check{\textbf{Z}}\textbf{ie}^\ast$, i.e. the quotient of $\shuff$ obtained by restricting functions to Weyl chambers, is the cospecies $\hat{\textbf{L}}^\ast$ given by
\[       \hat{\textbf{L}}^\ast[I] :=   \bigslant{\hat{\Sig}^\ast[I] }{   \big \la  \mathtt{M}_{F}   :  F\notin \text{L}^\ast[I]    \big \ra   }     .        \]
This is a geometric realization of the commutative Hopf algebra of linear orders $\textbf{L}^\ast$. Since $\textbf{L}^\ast$ is the coenveloping algebra of the Lie cooperad $\Lie^\ast$, we have a natural isomorphism 
\[\textbf{L}^\ast\cong \textbf{E}\circ \Lie^\ast.\] 
Therefore $\textbf{L}^\ast$ is a right $\Lie^\ast$-comodule. If we look at what the $\Lie^\ast$-coaction should be for the realization $\hat{\textbf{L}}^\ast$, we see that it is discrete differentiation of functions across reflection hyperplanes. This is geometrically the same as the Lie structure we shall give $\check{\textbf{Z}}\textbf{ie}^\ast$. 
\end{remark}
Let a \emph{Steinmann functional} $f\in \Hom(\textbf{L}^\vee[I], \Bbbk)$ over $I$ be a $\Bbbk$-valued function on adjoint chambers which satisfies the Steinmann relations \cite[Section 4.2]{lno2019}. We denote the cospecies of Steinmann functionals by $\boldsymbol{\Gamma}^\ast$. In \cite{lno2019}, it was shown that restricting to Steinmann functionals is necessary and sufficient for the discrete differentiation of functionals across hyperplanes to be a Lie cobracket, so that $\boldsymbol{\Gamma}^\ast$ has the structure of a Lie coalgebra. We now show that $\check{\textbf{Z}}\textbf{ie}^\ast$ is exactly the Lie coalgebra of Steinmann functionals, so $\boldsymbol{\Gamma}^\ast=\check{\textbf{Z}}\textbf{ie}^\ast$. 


See \cite[Definition 2.3]{lno2019} for the precise definition of the discrete derivative. 

\begin{lem}\label{cobracket}
For $F\in \Sigma^\ast[I]$ and $(S,T)\in [I;\textbf{2}]$, the discrete derivative $\boldsymbol{\partial}_{[S,T]}\, \check{\tc}_F$ of the functional $\check{\tc}_F$ across the special hyperplane $\cH_{(S|T)}$ is given by
\[       \boldsymbol{\partial}_{[S,T]}\, \check{\tc}_F =  \boldsymbol{\mu}_{(S|T)}\big( \check{\tc}_{  F\talloblong_S } \otimes   \check{\tc}_{  F\fatslash_{\, T} } - \check{\tc}_{  F\talloblong_T } \otimes   \check{\tc}_{  F\fatslash_{\, S} } \big ).\footnote{\ see \cite[Definition 1.6]{lno2019} for the definition of $\boldsymbol{\mu}_{(S|T)}$}     \]
\end{lem}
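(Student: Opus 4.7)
The plan is to compute the discrete derivative $\boldsymbol{\partial}_{[S,T]}\check{\tc}_F$ directly from its meaning as the jump of the characteristic functional of the tangent cone $\sigma^\vee_F$ across the special hyperplane $\cH_{(S|T)}$, splitting into cases according to the relative position of $(S,T)$ and $F$ as preposets. The key elementary input is the pairing computation $\langle h_{ab},\lambda_{ST}\rangle = \lambda_{ST}(a)-\lambda_{ST}(b)$, which takes the value $+1$ when $a\in S,b\in T$, the value $-1$ when $a\in T,b\in S$, and the value $0$ otherwise. Unpacking $(S,T)\leq F$ in terms of preposet containment shows that this relation holds precisely when $F$ has the form $(S_1,\dots,S_j,T_1,\dots,T_m)$ with no mixed lumps and all $S$-parts before all $T$-parts; in particular $F\talloblong_S=F|_S$ and $F\fatslash_{\,T}=F|_T$ in that case, and exactly one of $(S,T)\leq F$ or $(T,S)\leq F$ can hold (or neither).

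In Case A, $(S,T)\leq F$, the generating coroots of $\sigma^\vee_F$ split into three groups: those within $S$ (generating $\sigma^\vee_{F|_S}\subset \text{T}^\vee_S$), those within $T$ (generating $\sigma^\vee_{F|_T}\subset \text{T}^\vee_T$), and crossing coroots $h_{ab}$ with $a\in S,b\in T$, all having strictly positive pairing with $\lambda_{ST}$. Hence $\sigma^\vee_F$ lies in the closed positive halfspace of $\cH_{(S|T)}$, and under the identification $\cH_{(S|T)}\cap\text{T}^\vee_I = \text{T}^\vee_S\oplus \text{T}^\vee_T$ one obtains
\[\sigma^\vee_F\cap \cH_{(S|T)}=\sigma^\vee_{F|_S}\oplus \sigma^\vee_{F|_T}.\]
The jump across $\cH_{(S|T)}$ is therefore the characteristic functional of this intersection, which under the product decomposition is $\check{\tc}_{F|_S}\otimes \check{\tc}_{F|_T}=\check{\tc}_{F\talloblong_S}\otimes\check{\tc}_{F\fatslash_{\,T}}$, identified with its image under $\boldsymbol{\mu}_{(S|T)}$. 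Case B, $(T,S)\leq F$, is completely symmetric; now $\sigma^\vee_F$ sits in the closed negative halfspace, producing the opposite sign and yielding $-\check{\tc}_{F\talloblong_T}\otimes \check{\tc}_{F\fatslash_{\,S}}$.

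In Case C, when neither holds, there exist $a_1\in S,a_2\in T$ with $(a_2,a_1)\in F$ and $a_3\in T,a_4\in S$ with $(a_3,a_4)\in F$, so $\sigma^\vee_F$ contains coroots with pairings against $\lambda_{ST}$ of opposite signs, showing that $\cH_{(S|T)}$ is not a supporting hyperplane of $\sigma^\vee_F$. Since $\sigma^\vee_F$ is carved out by special hyperplanes and so is a union of closed adjoint chambers, one can argue that for any pair of adjacent chambers $\cS_\pm\in\text{L}^\vee[I]$ meeting along a codim-one face inside $\cH_{(S|T)}$, a generic point $p$ of that shared face lies on no other hyperplane of the arrangement; hence $p$ is either in the interior of $\sigma^\vee_F$ (forcing both $\cS_\pm\subset\sigma^\vee_F$) or outside $\sigma^\vee_F$ (forcing both $\cS_\pm$ outside), because any boundary point of $\sigma^\vee_F$ must lie on a supporting hyperplane, which cannot be $\cH_{(S|T)}$ here. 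Thus $\check{\tc}_F(\check{\mathtt{H}}_{\cS_+})=\check{\tc}_F(\check{\mathtt{H}}_{\cS_-})$ for all such pairs, so the jump vanishes, matching the right-hand side which is also zero since both $F\talloblong_S$ and $F\talloblong_T$ are zero. Combining the three cases yields the stated formula. The main obstacle is the careful verification in Case C, which hinges on the fact that the supporting hyperplanes of the generalized permutohedral cone $\sigma^\vee_F$ belong to the adjoint braid arrangement, enabling the codimension-one face analysis without pathological local geometry.
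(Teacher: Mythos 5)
Your proof is correct and follows essentially the same three-case structure as the paper (splitting on whether $(S,T)\leq F$, $(T,S)\leq F$, or neither, with antisymmetry reducing the second case to the first). The only real difference is that you spell out a geometric justification for the vanishing in the third case via the observation that $\cH_{(S|T)}$ is not a supporting hyperplane of $\sigma^\vee_F$ and so cannot carry any of its boundary, whereas the paper simply asserts $\check{\tc}_F(\mathtt{X}^{[S,T]})=\check{\tc}_F(\mathtt{X}^{[T,S]})$ without elaboration.
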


\begin{proof}
Let $\mathtt{X}$ be a codimension one adjoint face which is supported by the special hyperplane $\cH_{(S|T)}$. Let $\mathtt{X}^{[S,T]}$, respectively $\mathtt{X}^{[T,S]}$, be the adjoint chamber with facet $\mathtt{X}$ such that $\check{\mathtt{Y}}_{ST} \cdot \mathtt{X}^{[S,T]}=\mathtt{X}^{[S,T]}$, respectively $\check{\mathtt{Y}}_{ST}\cdot  \mathtt{X}^{[T,S]}=0$. 
First, suppose that $(S,T)\leq F$. In this case, we need to show that
\[       \boldsymbol{\partial}_{[S,T]}\, \check{\tc}_F =  \boldsymbol{\mu}_{(S|T)}( \check{\tc}_{  F|_S } \otimes   \check{\tc}_{  F|_{T} }   )      .      \]
By the definition of the derivative, we have
\[        \boldsymbol{\partial}_{[S,T]} \check{\tc}_F(\mathtt{X})=      \check{\tc}_F(\mathtt{X}^{[S,T]})-\check{\tc}_F(\mathtt{X}^{[T,S]})           .            \]
However, since $(S,T)\leq F$, we have $\check{\tc}_F(\mathtt{X}^{[T,S]})=0$, so that
\[         \boldsymbol{\partial}_{[S,T]} \check{\tc}_F(\mathtt{X})=      \check{\tc}_F(\mathtt{X}^{[S,T]})          .            \]
Then, directly from the definitions, we see that
\[    \boldsymbol{\mu}_{(S|T)}( \check{\tc}_{  F|_S } \otimes   \check{\tc}_{  F|_{T} }) (\mathtt{X})=1 \qquad \iff \qquad    \check{\tc}_F(\mathtt{X}^{[S,T]})=1     .    \]
Since these functionals take values $0$ or $1$, the result follows.
The case $(T,S)\leq F$ then follows by antisymmetry of the derivative. Finally, if $S$ is not an initial or terminal interval of $F$, then
\[         \boldsymbol{\mu}_{(S|T)}\big( \check{\tc}_{  F\talloblong_S } \otimes   \check{\tc}_{  F\fatslash_{\, T} } - \check{\tc}_{  F\talloblong_T } \otimes   \check{\tc}_{  F\fatslash_{\, S} } \big )  =  \boldsymbol{\mu}_{(S|T)}(0-0)=0.       \]
Also, in this case, we have
\[ \check{\tc}_F(\mathtt{X}^{[S,T]})=\check{\tc}_F(\mathtt{X}^{[T,S]}),  \]
and so $ \boldsymbol{\partial}_{[S,T]}\, \check{\tc}_F =0$, as required.
\end{proof}

\begin{cor}
The functionals $\{\check{\tc}_F: F\in \Sigma^\ast[I]\}$ satisfy the Steinmann relations. More generally, characteristic functionals of generalized permutohedral tangent cones $\{\check{\tc}_p: p\in \text{O}[I]\}$ satisfy the Steinmann relations. 
\end{cor}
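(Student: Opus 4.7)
The plan is to derive both claims directly from Lemma~5.1 together with the characterization of the Steinmann relations recalled in the introduction (cf.\ \cite{lno2019}): a functional $f \in \Hom(\textbf{L}^\vee[I],\Bbbk)$ is Steinmann precisely when, for every splitting $(S,T) \in [I;\textbf{2}]$, its discrete derivative $\boldsymbol{\partial}_{[S,T]}f$ lies in the image of $\boldsymbol{\mu}_{(S|T)}$. In other words, Steinmann functionals are exactly those whose first-order differences across each special hyperplane factorize.

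For the first claim, Lemma~5.1 exhibits precisely such a factorization: for every composition $F$ and every splitting $(S,T)$,
\[
\boldsymbol{\partial}_{[S,T]}\,\check{\tc}_F \;=\; \boldsymbol{\mu}_{(S|T)}\bigl(\check{\tc}_{F\talloblong_S}\otimes\check{\tc}_{F\fatslash_{\,T}} \;-\; \check{\tc}_{F\talloblong_T}\otimes\check{\tc}_{F\fatslash_{\,S}}\bigr),
\]
which lies in the image of $\boldsymbol{\mu}_{(S|T)}$ by construction. Hence every $\check{\tc}_F$ is Steinmann, and the first assertion is immediate.

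To extend from compositions to an arbitrary preposet $p \in \text{O}[I]$, I would invoke linearity: the Steinmann relations are linear, so Steinmann functionals form a $\Bbbk$-linear subspace of $\Hom(\textbf{L}^\vee[I],\Bbbk)$. It therefore suffices to write $\check{\tc}_p$ as a $\Bbbk$-linear combination of $\check{\tc}_F$'s with $F$ a composition. Since $\{\tC_F : F \in \Sigma^\ast[I]\}$ is a basis of $\Sig^\ast[I]$, the element $\tC_p$ admits a (finite) expansion in this basis; explicitly, inclusion-exclusion applied to $\tC_p = \sum_{F\leq p}\tM_F$ furnishes one, or equivalently this is the signed polyhedral subdivision of the generalized permutohedral cone $\sigma_p^\vee$ into simplicial permutohedral cones $\sigma_F^\vee$. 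Applying the quotient map $\check{\cU}^\ast \circ \mathcal{D}$ transports the expansion to the desired presentation of $\check{\tc}_p$, and each summand is Steinmann by the first part.

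No genuine obstacle is anticipated: the real content is already encapsulated in Lemma~5.1, and only a brief linearity argument is needed to pass from compositions to arbitrary preposets.
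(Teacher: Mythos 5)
Your proof is correct and takes essentially the same approach as the paper: invoke Lemma~\ref{cobracket} for the factorization of first derivatives of $\check{\tc}_F$, apply the characterization from \cite{lno2019} that a functional is Steinmann iff its first derivatives factorize, and extend to $\check{\tc}_p$ by linearity using the $\tC$-basis expansion. The paper's own proof is terser and leaves the linearity step implicit, but the underlying argument is identical.
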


\begin{proof}
In \autoref{cobracket}, we showed that the first derivatives of $\check{\tc}_F$ factorize. The result then follows since a functional satisfies the Steinmann relations if and only if its first derivatives factorize, see \cite[Section 4.2]{lno2019}.
\end{proof}

But what about the converse?

\begin{thm}
A functional on adjoint chambers satisfies the Steinmann relations if and only if it is a linear combination of characteristic functionals of (generalized) permutohedral tangent cones. Therefore \[\check{\textbf{Z}}\textbf{ie}^\ast=\boldsymbol{\Gamma}^\ast\]
as cospecies. 
\end{thm}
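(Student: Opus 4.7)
The inclusion $\check{\textbf{Z}}\textbf{ie}^\ast \subseteq \boldsymbol{\Gamma}^\ast$ is the content of the preceding corollary. It remains to prove the reverse inclusion, which I plan to do by showing that both cospecies coincide with the indecomposable quotient $\textbf{Zie}^\ast$ of $\Sig^\ast$.

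\textbf{Step 1: $\check{\textbf{Z}}\textbf{ie}^\ast \cong \textbf{Zie}^\ast$.} The surjection $\check{\cU}^\ast \circ \mathcal{D} \colon \Sig^\ast \twoheadrightarrow \check{\textbf{Z}}\textbf{ie}^\ast$ will be shown to factor through $\cU^\ast$. It suffices to check that each quasi-shuffle product $\mu_{S,T}(\check{\tM}_F \otimes \check{\tM}_G) = \sum_{H \preceq (F|G)} \check{\tM}_H$ vanishes on every adjoint chamber $\cS \in \text{L}^\vee[I]$. Expanding each $\check{\tM}_H$ as $(-1)^{l(H)-1}$ times the characteristic functional of the open permutohedral cone $\wt{\sigma}^\vee_{\bar H}$, evaluation on $\cS$ yields $\sum_H (-1)^{l(H)-1}\,[\cS \subseteq \wt{\sigma}^\vee_{\bar H}]$. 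I will verify this sum vanishes by grouping quasi-shuffles according to their common nonsymmetric part and applying inclusion-exclusion over the resulting Boolean sublattices of coarsenings; this mirrors the well-known vanishing of products in the indecomposable quotient of the quasi-shuffle Hopf algebra, transported through $\mathcal{D}$. The induced map $\bar\rho \colon \textbf{Zie}^\ast \twoheadrightarrow \check{\textbf{Z}}\textbf{ie}^\ast$ respects Lie cobrackets by \autoref{cobracket}; injectivity will follow by constructing, for each $F \in \Sigma^\ast_{i_0}[I]$, a distinguished chamber $\cS_F$ making the matrix of values $\check{\tp}_F(\cS_{F'})$ invertible, dualizing the pairing between standard right-comb trees and the $\tp$-basis from \autoref{lie}.

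\textbf{Step 2: $\boldsymbol{\Gamma}^\ast \subseteq \check{\textbf{Z}}\textbf{ie}^\ast$.} With Step 1 established, it suffices to prove the dimension bound $\dim \boldsymbol{\Gamma}^\ast[I] \leq \dim \textbf{Zie}^\ast[I]$. By \autoref{cobracket}, the inclusion $\check{\textbf{Z}}\textbf{ie}^\ast \hookrightarrow \boldsymbol{\Gamma}^\ast$ is a morphism of Lie coalgebras under the discrete-derivative cobracket of \cite{lno2019}. By \cite[Theorem 5.3]{lno2019}, every Steinmann functional has all higher discrete derivatives factorizing; iterating the cobracket on any Steinmann functional then realizes it inside a tensor power of degree-one Steinmann data, matching the generation pattern of $\textbf{Zie}^\ast = \Lie^\ast \circ \textbf{E}_+$ and yielding the required bound.

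The principal obstacle is this last dimension comparison: converting the iterated factorization of Steinmann functionals into the exact bound requires carefully identifying the image of iterated cobrackets with the free Lie coalgebra on partition-block data, ruling out extraneous Steinmann solutions. The inclusion-exclusion argument in Step 1, while combinatorially intricate, follows the standard pattern for the indecomposable quotient of the quasi-shuffle Hopf algebra.
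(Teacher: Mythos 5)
Your Step 1 is essentially sound, though circuitous: the factorization through $\cU^\ast$ (i.e.\ that products vanish in $\check{\textbf{Z}}\textbf{ie}^\ast$) is immediate from the observation that when a preposet $p=(p_1|p_2)$ has at least two blocks, its coroot cone $\sigma^\vee_p$ lies in a proper semisimple subspace of $\text{T}^\vee_I$ and hence meets no open adjoint chamber, so $\check{\tc}_p(\check{\mathtt{H}}_\cS)=0$ for every chamber $\cS$; no inclusion--exclusion over quasi-shuffle coarsenings is needed. The injectivity you defer to a matrix argument is handled in the paper (\autoref{main3}) by an induction that peels off initial lumps using the cobracket, again independent of the present theorem. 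So Step~1 is recoverable.

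Step 2 is where the genuine gap lies, and it is not a technicality you can defer. The inequality $\dim \boldsymbol{\Gamma}^\ast[I] \leq \dim \textbf{Zie}^\ast[I]$ is precisely the substance of the theorem, and the passage ``iterating the cobracket on any Steinmann functional then realizes it inside a tensor power of degree-one Steinmann data, matching the generation pattern \dots yielding the required bound'' is not an argument: you have not shown that a Steinmann functional is determined by its iterated derivatives, nor that there are only $\dim \textbf{Zie}^\ast[I]$ independent ways these derivatives can look, nor that the reconstruction lands in $\check{\textbf{Z}}\textbf{ie}^\ast$. In particular, a functional whose iterated derivatives all vanish need not be zero a priori --- it is constant, and one must separately verify that constants are in the claimed span (they are, as scalings of $\check{\tc}_{(I)}$). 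The paper closes this gap constructively, bypassing dimension comparison entirely: it runs a descending induction on the number of lumps $m$, starting from any Steinmann $f=f_n$, and at each stage subtracts $\sum_{l(F)=m} \upsilon(\boldsymbol{\partial}_{[F]} f_m)\,\check{\tc}_F$. The computation that makes this work is $\upsilon(\boldsymbol{\partial}_{[F]}\check{\tc}_G) = \delta_{FG}$ for $l(F)=l(G)$, which shows the residual $f_{m-1}$ has all its order-$\geq m$ derivatives killed, until one reaches a constant functional and hence a scaling of $\check{\tc}_{(I)}$. This is a discrete Taylor-expansion argument and is the idea your Step~2 gestures at but does not carry out; to repair your proposal you would need to make that inductive reconstruction precise rather than appeal to a hoped-for dimension count.
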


\begin{proof}
Let $n=|I|$. For $m\in \bN$ with $m\leq n$, let $f_m$ denote a Steinmann functional over $I$ such that $\boldsymbol{\partial}_{[F]} f_m=0$ for all $F\in  \Sigma_{i_0}^\ast[I]$ with $l(F)>m$. Notice that if $l(F)=m$, then $\boldsymbol{\partial}_{[F]} f_m$ is a constant functional, whose value we denote by $\upsilon(\boldsymbol{\partial}_{[F]} f_m)$.
Define
\[      f_{m-1}  =  f_m -  \sum_{ \{ F\in  \Sigma_{i_0}^\ast[I]\, :\, l(F)=m  \} } \upsilon(\boldsymbol{\partial}_{[F]} f_m)\, \check{\tc}_F .      \]
Then, for $F\in  \Sigma_{i_0}^\ast[I]$ with $l(F)>m-1$, we have
\[       \boldsymbol{\partial}_{[F]}   f_{m-1}=  \boldsymbol{\partial}_{[F]}  f_m-    \sum_{ \{ G\in  \Sigma_{i_0}^\ast[I]\, :\, l(G)=m\} }   \upsilon(\boldsymbol{\partial}_{[G]} f_m)\,   \boldsymbol{\partial}_{[F]}\check{\tc}_G=0  .   \]
To see this, notice that if $l(F)>m$ then everything is equal to zero, and if $l(F)=m$, then $\boldsymbol{\partial}_{[F]}\check{\tc}_G$ is constant with 
\[       \upsilon(\boldsymbol{\partial}_{[F]}\check{\tc}_G) =\delta_{FG} .    \]
This shows that we can systematically perturb a Steinmann functional with linear combinations of the functionals $\{\check{\tc}_F: F\in \Sigma_{i_0}^\ast[I]\}$, gradually making it more and more symmetric, until we obtain a constant functional, which will be a scaling of $\check{\tc}_{(I)}$.
\end{proof}

\begin{remark}\label{ref}
This result is not surprising, given that the Steinmann relations are about enjoying factorization of the discrete derivative across special hyperplanes (in the case of characteristic functionals, the derivative is just taking the boundary), and then \cite[Theorem 6.1]{aguiar2017hopf}, which tells us that generalized permutohedra are exactly those polyhedra which enjoy type $A$ factorization of their boundaries.
\end{remark}


Let us equip the cospecies $\check{\textbf{Z}}\textbf{ie}^\ast$ with the cobracket $\partial$ of discrete differentiation across hyperplanes, so that it is now exactly the Lie coalgebra $\boldsymbol{\Gamma}^\ast$ from \cite{lno2019},
\[      \partial:   \check{\textbf{Z}}\textbf{ie}^\ast\to \check{\textbf{Z}}\textbf{ie}^\ast \otimes_{\text{Day}} \check{\textbf{Z}}\textbf{ie}^\ast  , \qquad    \partial_{[S,T]}\check{\tc}_F : =  \check{\tc}_{  F\talloblong_S } \otimes   \check{\tc}_{  F\! \fatslash_{\, T} } - \check{\tc}_{  F\talloblong_T } \otimes   \check{\tc}_{  F\! \fatslash_{\, S} }   .   \] 
We now dispense with the notations $\boldsymbol{\Gamma}$ and $\boldsymbol{\Gamma}^\ast$.

\subsection{An Isomorphism of Lie Coalgebras}

If a preposet $p\in \text{O}[I]$ has at least two blocks, then $\check{\tC}_p(\check{\mathtt{H}}_\cS  )= 0$ for all $\cS \in \text{L}^\vee[I]$, and so $\check{\tc}_p=0$. Therefore the Lie coalgebra $\check{\textbf{Z}}\textbf{ie}^\ast$ is a quotient of the indecomposable quotient of $\adshuff$, and so we have a surjective morphism of cospecies
\[  \textbf{Zie}^\ast\twoheadrightarrow \check{\textbf{Z}}\textbf{ie}^\ast, \qquad \tc_p\mapsto \check{\tc}_p .  \]
We now show that these two quotients of $\adshuff$ actually coincide, so that in fact $\textbf{Zie}^\ast\cong \check{\textbf{Z}}\textbf{ie}^\ast$.


\begin{lem} \label{main3}
The set of functionals
\[    \big \{   \check{\tc}_F: F\in \Sigma^\ast_{i_0}[I]    \big  \} \subset \check{\textbf{Z}}\textbf{ie}^\ast[I]     \]
is linearly independent.
\end{lem}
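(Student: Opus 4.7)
The plan is to induct on $|I|$. The base case $|I|=1$ is trivial since $\Sigma^\ast_{i_0}[\{i_0\}]$ has one element and $\check{\tc}_{(\{i_0\})}$ is the characteristic functional of the unique zero-dimensional chamber, which is nonzero. For the inductive step, I would observe that, because $\{\tc_F : F\in \Sigma^\ast_{i_0}[I]\}$ is a basis of $\textbf{Zie}^\ast[I]$ mapping onto $\{\check{\tc}_F\}$ under the surjection $\check{\cU}^\ast\colon \textbf{Zie}^\ast \twoheadrightarrow \check{\textbf{Z}}\textbf{ie}^\ast$ constructed in the paragraph preceding the lemma, the claimed linear independence is equivalent to the assertion that $\check{\cU}^\ast$ is an isomorphism in degree $I$. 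The inductive hypothesis is therefore that $\check{\cU}^\ast$ is an isomorphism on every strictly smaller set.

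Take $x = \sum_F a_F \tc_F \in \textbf{Zie}^\ast[I]$ with $\check{\cU}^\ast(x)=0$, and apply the Lie cobracket. The formulas for $\partial_{[S,T]}$ on $\textbf{Zie}^\ast$ (Section~\ref{lie}) and on $\check{\textbf{Z}}\textbf{ie}^\ast$ (Lemma~\ref{cobracket}) are literally the same deconcatenation expression, so $\check{\cU}^\ast$ is a morphism of Lie coalgebras. Therefore
\[
(\check{\cU}^\ast \otimes \check{\cU}^\ast)\,\partial_{[S,T]}(x) \;=\; \partial_{[S,T]}\bigl(\check{\cU}^\ast(x)\bigr) \;=\; 0,
\]
and the inductive hypothesis makes $\check{\cU}^\ast \otimes \check{\cU}^\ast$ an isomorphism on $\textbf{Zie}^\ast[S] \otimes \textbf{Zie}^\ast[T]$ for every $(S,T)\in [I;\textbf{2}]$. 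Hence $\partial_{[S,T]}(x)=0$ for every $(S,T)$; that is, $x$ is primitive in the Lie coalgebra $\textbf{Zie}^\ast[I]$.

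The remaining task is to identify the primitive part of $\textbf{Zie}^\ast[I]$ as the one-dimensional subspace spanned by $\tc_{(I)}$. I would use the plethystic decomposition $\textbf{Zie}^\ast[I] \cong \bigoplus_{P\in \Pi[I]} \Lie^\ast[P]$ dual to $\textbf{Zie}=\Lie\circ \textbf{E}_+^\ast$. Reading off the deconcatenation formula for $\partial_{[S,T]}\tc_F$ shows that this cobracket preserves the partition summands and restricts on each $\Lie^\ast[P]$ to the Lie cooperad cobracket, which is dual to the Lie bracket of $\Lie[P]$. For $|P|\geq 2$ the Lie bracket is surjective (since $\Lie$ is generated in arity one), so its dual is injective and $\Lie^\ast[P]$ has no nonzero primitives. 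The only remaining primitive summand is $\Lie^\ast[\{I\}]$, which is one-dimensional and spanned by $\tc_{(I)}$. Thus $x = a_{(I)}\tc_{(I)}$, and applying $\check{\cU}^\ast$ gives $a_{(I)}\check{\tc}_{(I)}=0$; since $\check{\tc}_{(I)}$ is the constant functional $1$ on chambers (manifestly nonzero), $a_{(I)}=0$ and hence $x=0$, completing the induction.

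The main obstacle I expect is the primitive classification, specifically the verification that the Lie cobracket on $\textbf{Zie}^\ast$ splits as the direct sum of Lie cooperad cobrackets under the plethystic decomposition; everything else is either a direct application of the induction hypothesis or a mechanical consequence of the deconcatenation formula.
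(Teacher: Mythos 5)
Your proposal is correct in substance and takes a genuinely different route from the paper's. The paper argues by a direct double induction, outer on $n=|I|$ and inner on the cardinality $m$ of the first lump, applying $\partial_{[S,T]}$ with $i_0\in S$ and $|S|=m$ so as to peel off the leading lump and land in $\check{\textbf{Z}}\textbf{ie}^\ast[S]\otimes\check{\textbf{Z}}\textbf{ie}^\ast[T]$, then invoking the inductive hypothesis there. You instead recast the lemma as the injectivity of the surjection $\textbf{Zie}^\ast\twoheadrightarrow\check{\textbf{Z}}\textbf{ie}^\ast$, note that this map is a morphism of Lie coalgebras because the cobrackets are both the cocommutator of deconcatenation, and use the outer induction alone to reduce to the single abstract fact that the kernel of the cobracket of $\textbf{Zie}^\ast[I]$ is one-dimensional, spanned by $\tc_{(I)}$. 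This is essentially a connectedness argument for conilpotent Lie coalgebras in species: a surjection which is an isomorphism on the primitive part and on all strictly smaller components must be an isomorphism. What your route buys is conceptual separation — the geometric content is isolated into two checks, that the deconcatenation cobrackets agree and that $\check{\tc}_{(I)}\neq 0$ — and you never have to confront the redundancy of the spanning set $\{\check{\tc}_G : G\in\Sigma^\ast[T]\}$ that appears after applying $\partial_{[S,T]}$, which is the delicate point in the paper's peeling argument. The price is that you rely on the plethystic identification $\textbf{Zie}\cong\Lie\circ\bE^\ast_+$ and a structural fact about free Lie algebras, whereas the paper stays entirely at the level of explicit functionals on chambers.

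Two small repairs to the writeup. When checking that the cobracket respects the decomposition $\textbf{Zie}^\ast[I]\cong\bigoplus_{P}\Lie^\ast[P]$ you should read off the deconcatenation formula in the $\tp$-basis rather than the $\tc$-basis: it is $\tp_F$ that lies in the single summand $\Lie^\ast[Q_F]$, whereas $\tc_F=\sum_{G\leq F}\tm_G$ has components spread across several summands with coarser partitions. Since the cobracket formula looks the same in either basis, nothing breaks, but the grading claim is only visible in the $\tp$-basis. Second, the parenthetical ``since $\Lie$ is generated in arity one'' is garbled; the justification you want is that the free Lie algebra $\Lie\circ\bE^\ast_+$ is generated as a Lie algebra by its degree-one part $\bE^\ast_+$, so the image of its bracket is all of $\bigoplus_{|P|\geq 2}\Lie[P]$, and dually the kernel of the cobracket is exactly the annihilator $\Lie^\ast[\{I\}]$, which is one-dimensional and spanned by $\tc_{(I)}=\tp_{(I)}=\tm_{(I)}$.
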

\begin{proof}
We prove by induction on $n=|I|$. Suppose we have coefficients $a_F\in \Bbbk$ such that
\[      \sum_{ F\in \Sigma^\ast_{i_0}[I]  } a_F\, \check{\tc}_F=0.         \]  
Let $(S,T)\in [I, \textbf{2}]$ with $i_0\in S$. Then
\[         \sum_{ F\in \Sigma^\ast_{i_0}[I]} a_F \, \big( \partial_{[S,T]}\, \check{\tc}_{F} \big)      =	\partial_{[S,T]}  \Bigg(\sum_{F\in \Sigma^\ast_{i_0}[I]} a_F\, \check{\tc}_{F} \Bigg)=   \partial_{[S,T]}\, 0=0. \]
Let $m=|S|$. If $m=1$, then each term $\partial_{[S,T]}\, \check{\tc}_{F}$ which is not zero will be of the form 
\[\check{\tc}_{(i_0)} \otimes \check{\tc}_{G},\] 
for some $G\in \Sigma^\ast[T]$. Then, since $|T|=n-1<n$, by the induction hypothesis we have $a_{F}=0$ for all $F\in \Sigma^\ast_{i_0}[I] $ with a first lump of cardinality one. 

We now do induction on $m$, and assume that $a_{F}=0$ for all $F\in \Sigma^\ast_{i_0}[I]$ with a first lump of cardinality less than $m$. By the induction hypothesis on $m$, each $\partial_{[S,T]}\, \check{\tc}_{F}$ which is not zero, and such that $a_F$ is also not zero, will be of the form 
\[\check{\tc}_{(S)} \otimes \check{\tc}_{G},\] 
for some $G\in \Sigma^\ast[T]$. Therefore, by the induction hypothesis on $n$, we have $a_F=0$ if $F$ has a first lump of cardinality $m$.
\end{proof} 

\begin{lem}\label{basis}
The set of functionals
\[    \big \{   \check{\tc}_F: F\in \Sigma^\ast_{i_0}[I]    \big  \}\subset \check{\textbf{Z}}\textbf{ie}^\ast[I]       \]
is a basis of $\check{\textbf{Z}}\textbf{ie}^\ast[I]$. 
\end{lem}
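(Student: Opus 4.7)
The plan is to leverage what is already in place: by Lemma \ref{main3}, the set $\{\check{\tc}_F : F\in \Sigma^\ast_{i_0}[I]\}$ is linearly independent in $\check{\textbf{Z}}\textbf{ie}^\ast[I]$, so all that remains is to show that it spans. This will come essentially for free from the surjection $\textbf{Zie}^\ast \twoheadrightarrow \check{\textbf{Z}}\textbf{ie}^\ast$ established immediately before Lemma \ref{main3}.

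First, I would recall that it was proved in Section \ref{lie} that $\{\tc_F : F \in \Sigma^\ast_{i_0}[I]\}$ is a basis of $\textbf{Zie}^\ast[I]$ (being the image under the isomorphism $\cU^\ast$ of the $\tC$-basis restricted to the corresponding index set, equivalently the dual of the standard right comb-tree basis of $\textbf{Zie}[I]$ from Proposition \ref{prop:rightcomb}). In particular these elements span $\textbf{Zie}^\ast[I]$.

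Next, applying the surjective morphism of cospecies
\[
\textbf{Zie}^\ast \twoheadrightarrow \check{\textbf{Z}}\textbf{ie}^\ast, \qquad \tc_p \mapsto \check{\tc}_p,
\]
a spanning set is sent to a spanning set. Therefore $\{\check{\tc}_F : F\in \Sigma^\ast_{i_0}[I]\}$ spans $\check{\textbf{Z}}\textbf{ie}^\ast[I]$. Combined with the linear independence from Lemma \ref{main3}, this gives the claimed basis.

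There is no serious obstacle here; the lemma is essentially a bookkeeping statement packaging Lemma \ref{main3} together with the surjection. The real content was already done in Lemma \ref{main3}, where the inductive argument on $|I|$ and on the size of the first lump exploited the explicit formula for the cobracket from Lemma \ref{cobracket}. One consequence worth noting in the write-up is that the surjection $\textbf{Zie}^\ast \twoheadrightarrow \check{\textbf{Z}}\textbf{ie}^\ast$ must therefore be an isomorphism, since it carries a basis to a basis; this is the assertion flagged in the paragraph preceding the lemma, and it is the real reason one bothers to state this lemma separately.
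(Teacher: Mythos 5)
Your proof is correct and takes essentially the same route as the paper: linear independence comes from Lemma~\ref{main3}, and spanning comes from the fact that the $\tc$-basis of $\textbf{Zie}^\ast[I]$ maps onto $\{\check{\tc}_F : F\in\Sigma^\ast_{i_0}[I]\}$ under the quotient $\textbf{Zie}^\ast\twoheadrightarrow\check{\textbf{Z}}\textbf{ie}^\ast$. One small correction to your parenthetical: $\cU^\ast:\Sig^\ast\twoheadrightarrow\textbf{Zie}^\ast$ is a surjection rather than an isomorphism, and it is the $\tp$-basis (not the $\tc$-basis) that is dual to the standard right comb-tree basis of Proposition~\ref{prop:rightcomb}; nonetheless $\{\tc_F : F\in\Sigma^\ast_{i_0}[I]\}$ is indeed a basis of $\textbf{Zie}^\ast[I]$ as asserted in Section~\ref{lie}, so the argument stands.
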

\begin{proof}
This set of functionals is linearly independent by \autoref{main3}. This set spans $\check{\textbf{Z}}\textbf{ie}^\ast[I]$ because it is the image of the $\tc$-basis under the quotient $\textbf{Zie}^\ast\twoheadrightarrow \check{\textbf{Z}}\textbf{ie}^\ast$.
\end{proof} 
\begin{thm} \label{maincor}
The quotient
\[\textbf{Zie}^\ast \twoheadrightarrow    \check{\textbf{Z}}\textbf{ie}^\ast , \qquad   \tp_F\mapsto \check{\tp}_F \quad \text{or} \quad  \tm_F\mapsto \check{\tm}_F \quad \text{or} \quad \tc_p\mapsto \check{\tc}_p  \] 
is an isomorphism of Lie coalgebras.
\end{thm}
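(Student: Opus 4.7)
The plan is to verify that the given surjective cospecies map $\textbf{Zie}^\ast \twoheadrightarrow \check{\textbf{Z}}\textbf{ie}^\ast$ is a bijection and that it intertwines the cobrackets. Well-definedness and surjectivity have already been established in the discussion just before the statement: any preposet $p$ with two or more blocks has $\check{\tC}_p$ vanishing on every adjoint chamber, so $\check{\tc}_p = 0$, and hence $\check{\textbf{Z}}\textbf{ie}^\ast$ is a quotient of the indecomposable quotient $\textbf{Zie}^\ast$ of $\Sig^\ast$ via $\tc_p \mapsto \check{\tc}_p$.

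For injectivity, I would compare the $\tc$-bases on both sides. By the construction in \autoref{lie}, the set $\{\tc_F : F \in \Sigma^\ast_{i_0}[I]\}$ is a basis of $\textbf{Zie}^\ast[I]$, and by \autoref{basis} the set $\{\check{\tc}_F : F \in \Sigma^\ast_{i_0}[I]\}$ is a basis of $\check{\textbf{Z}}\textbf{ie}^\ast[I]$. Since the map $\tc_F \mapsto \check{\tc}_F$ sends the first basis bijectively onto the second, it is a linear isomorphism of vector cospecies, componentwise. (Equivalently, the components are finite-dimensional of equal dimension $|\Sigma^\ast_{i_0}[I]|$, so the surjection is forced to be injective.) The same logic applied to the $\tp$- and $\tm$-bases, which are obtained by a change of basis from the $\tc$-basis, shows that the map also sends $\tp_F \mapsto \check{\tp}_F$ and $\tm_F \mapsto \check{\tm}_F$ as bases, as claimed in the statement.

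Finally, I would check compatibility with the Lie cobrackets. The cobracket of $\textbf{Zie}^\ast$ expressed on the $\tc$-basis is the deconcatenation cocommutator, as recorded at the end of \autoref{lie}. The cobracket of $\check{\textbf{Z}}\textbf{ie}^\ast$ was defined on the $\check{\tc}$-basis by exactly the same formula. Hence the bijection of bases automatically intertwines the two cobrackets, and the map is a morphism of Lie coalgebras. The real work has already been carried out in \autoref{cobracket} (which shows that discrete differentiation of $\check{\tc}_F$ across a special hyperplane factorizes as deconcatenation, so that the stated cobracket on $\check{\textbf{Z}}\textbf{ie}^\ast$ is indeed well-defined as discrete differentiation across hyperplanes) and in \autoref{basis} (which supplies the basis, the only nontrivial input). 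Given those results, no further obstacle remains: the theorem reduces to a matching of bases and of combinatorial formulas for the cobrackets.
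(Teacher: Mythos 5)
Your proof is correct and takes essentially the same route as the paper: the paper's own proof cites Lemma~\ref{basis} for the linear isomorphism and Lemma~\ref{cobracket} for compatibility of cobrackets, which is exactly the structure you spell out in more detail (including the change-of-basis remark for $\tp$ and $\tm$).
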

\begin{proof}
This quotient is an isomorphism at the level of cospecies by \autoref{basis}. This quotient preserves the cobracket by \autoref{cobracket}.
\end{proof} 
Thus, we have shown that Steinmann functionals, equivalently the span of (generalized) permutohedral cones modulo higher codimensions, equipped with the discrete differentiation of functionals across special hyperplanes, is the indecomposable quotient of the adjoint realization of $\Sig^\ast$. 

We finish with a conceptual remark, which we hope further clarifies the situation. For a partition $P\in \Pi^\ast[I]$, let $\adshuff_{P} [I]$ denote the subspace of $\adshuff[I]$ which consists of those functionals that are supported by the semisimple subspace $\sigma^\vee_{P}$,
\[\adshuff_{P} [I]:=\big\{     f\in \adshuff[I]: f(  \check{\mathtt{H}}_\cS  )\neq 0 \implies  \sigma^\vee_{\cS} \subset \sigma^\vee_{P}  \big\}   .     \]
Since the Hopf structure of $\adshuff$ was induced by the identification $\tC_p\mapsto \check{\tC}_p$ with $\Sig^\ast$, the higher multiplication of $\adshuff$ is given by
\[      \Delta_P: \adshuff(P)\hookrightarrow    \adshuff[I], \qquad  \check{\tC}_{p_1} \otimes  \dots \otimes  \check{\tC}_{p_k} \mapsto  \check{\tC}_{(p_1| \dots | p_k)}.       \]
Then, since $\check{\textbf{Z}}\textbf{ie}^\ast$ was the indecomposable quotient of $\adshuff$, the image of $\Delta_P$ must be $\adshuff_{P} [I]$,
\[         \adshuff_{P} [I] \cong  \adshuff(P).    \]
In this sense, the higher multiplication of the adjoint realization of $\Sig^\ast$ is simply the embedding of semisimple subspaces. For the Lie coalgebra $\textbf{Zie}^\ast$, one then quotients out the images of all these embeddings, leaving just the chambers. 

\subsection{Bring to Basis for Steinmann Functionals}

\begin{figure}[t]
	\centering
	\includegraphics[scale=0.7]{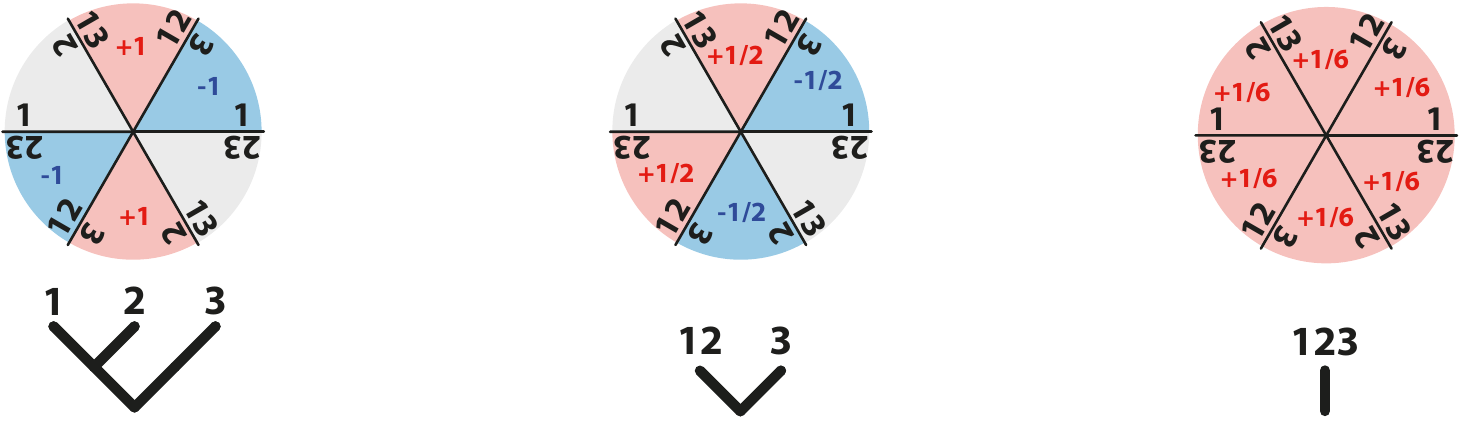}
	\caption{The primitive elements $\check{\tQ}_\mathcal{T}\in \check{\textbf{Z}}\textbf{ie}[I]$ for $\mathcal{T}\in\big  \{  [[1,2],3], [12,3], [123]\big  \}$.}
	\label{fig:lieonadjoint}
\end{figure} 

We now evaluate derivatives of functionals at the first Eulerian idempotent in order to bring Steinmann functionals to the $\check{\tp}$-basis. This will work in the same way as a Taylor series expansion. Let $\check{\Sig}$ be the dual species of the cospecies $\adshuff$, 
\[    \check{\Sig}[I]:=    \Hom\big(\adshuff[I], \Bbbk\big )   .  \]
This is naturally a quotient of the span of adjoint faces, 
\[            \check{\Sig}=      \bigslant{ \check{\Sig}^\vee }{ \sim }  \ .       \]
Let $ \check{\textbf{Z}}\textbf{ie}$ denote the Lie algebra which is dual to the Lie coalgebra $\check{\textbf{Z}}\textbf{ie}^\ast$, 
\[  \check{\textbf{Z}}\textbf{ie}[I]:=    \Hom\big (\check{\textbf{Z}}\textbf{ie}^\ast[I]  , \Bbbk \big )   .  \]
This is naturally the subspecies of $\check{\Sig}$ which is spanned by adjoint chambers. The underlying species of $ \check{\textbf{Z}}\textbf{ie}$ is given by
\[         \check{\textbf{Z}}\textbf{ie}= \bigslant{ \check{\bL}^\vee }{ \textbf{Stein} } ,     \]
where $\textbf{Stein}$ is the span of the Steinmann relations \cite[Section 4.2]{lno2019}. The \emph{first Eulerian idempotent} $\tE_I\in \Sig[I]$ is defined by putting $\tE_{\emptyset}:=0$, and 
\[ \tE_I:=\tQ_{(I)}= -\sum_{F\in \Sigma[I]}\dfrac{(-1)^{ l(F) }}{l(F)}  \tH_F      \]
for $I$ nonempty. See \cite[Section 14.1]{aguiar2013hopf}. The first Eulerian idempotent is a primitive series in $\Sig$. Its image in the adjoint realization $ \check{\textbf{Z}}\textbf{ie}$ is then as follows. For $I$ nonempty, let $\check{\mathtt{E}}_{I}\in  \check{\textbf{Z}}\textbf{ie}[I]$ such that for all $F\in \Sigma^\ast[I]$, we have
\[   
  \check{\tp}_F(\check{\mathtt{E}}_{I}) : =
\begin{cases}
1  &\quad\text{if $F=(I)$} \\
0  &\quad\text{otherwise.} 
\end{cases}
\]
To define $\check{\mathtt{E}}_{I}$, it is enough to consider just the basis elements $\{ \check{\tp}_F:F\in   \Sigma^\ast_{i_0}[I]  \}$. The definition is then satisfied because $\check{\tp}$-basis elements satisfy shuffle relations. This defines the series
\[    \bE \to  \check{\textbf{Z}}\textbf{ie} , \qquad       \tH_I \mapsto    \check{\tE}_{I}.    \]
In order to obtain the explicit isomorphism between $\textbf{Zie}$ and $ \check{\textbf{Z}}\textbf{ie}$, we should now act on $\check{\tE}_{I}$ with the dual derivative $\boldsymbol{\partial}^\ast_\mathcal{T}$ of \cite[Definition 2.4]{lno2019}, giving
\[   
\textbf{Zie} \to  \check{\textbf{Z}}\textbf{ie}
, \qquad   
 \mathcal{T} \mapsto \check{\tQ}_\mathcal{T} :=   \partial^\ast_\mathcal{T} (\check{\tE}_{S_1} \otimes \dots \otimes \check{\tE}_{S_k})
.    \]
See \autoref{fig:lieonadjoint}. We have computed $\check{\tE}_{I}$ for $n=4$, see \autoref{fig:eularian}. For $1\leq n\leq 4$, $\check{\tE}_{I}$ may be presented as a sum of $n!$ adjoint faces, all with coefficient $1/n!$. We have not studied the cases $n\geq 5$.
\begin{figure}[t]
	\centering
	\includegraphics[scale=0.6]{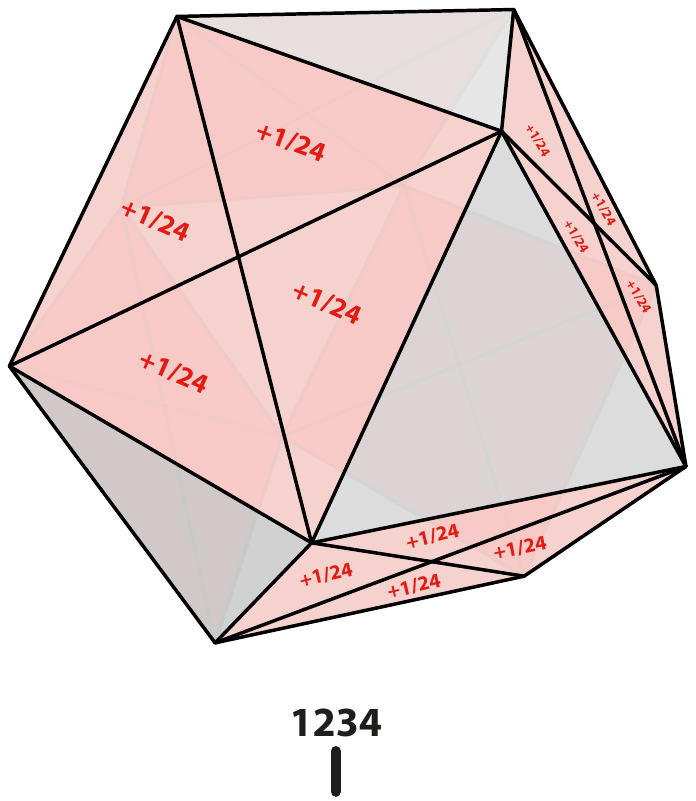}
	\caption{The adjoint realization of the first Eulerian idempotent $\tE_{I}$, for $n=4$, shown on the Steinmann sphere. The $24$ adjoint chambers which are contained in square facets are all included with coefficient $1/24$. Note that due to the Steinmann relations, this presentation is not unique.}
	\label{fig:eularian}
\end{figure} 


\begin{thm}\label{exp}
Given a Steinmann functional $f\in \check{\textbf{Z}}\textbf{ie}^\ast[I]$, we have
\[      f=\sum_{F\in \Sigma_{i_0}[I]}  \partial_{ [F] }  f  (\check{\tE}_{S_1} \otimes \dots \otimes \check{\tE}_{S_k})\, \,  \check{\tp}_F.      \]
\end{thm}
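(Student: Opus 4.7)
The plan is to interpret the formula as a dual-basis expansion and then use the Lie-bracket/Lie-cobracket duality between $\check{\textbf{Z}}\textbf{ie}$ and $\check{\textbf{Z}}\textbf{ie}^\ast$. First, by \autoref{basis} combined with \autoref{maincor}, the set $\{\check{\tp}_F : F\in \Sigma_{i_0}^\ast[I]\}$ is a basis of $\check{\textbf{Z}}\textbf{ie}^\ast[I]$. Identifying $\Sigma_{i_0}[I]$ with $\Sigma_{i_0}^\ast[I]$ in the usual way (a composition and its associated total preposet), I write the Steinmann functional uniquely as
\[ f=\sum_{F\in \Sigma_{i_0}[I]} a_F\, \check{\tp}_F, \qquad a_F\in \Bbbk. \]

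Second, I would identify the basis of $\check{\textbf{Z}}\textbf{ie}[I]$ dual to $\{\check{\tp}_F\}$. Under the isomorphism $\textbf{Zie}^\ast\twoheadrightarrow \check{\textbf{Z}}\textbf{ie}^\ast$ of \autoref{maincor}, the elements $\tp_F$ map to $\check{\tp}_F$, and the linear dual of this isomorphism is precisely the map $\textbf{Zie}\to \check{\textbf{Z}}\textbf{ie}$, $\mathcal{T}\mapsto \check{\tQ}_\mathcal{T}$. Since in the unquotiented setting the standard right-comb trees $\{[F]: F\in \Sigma_{i_0}[I]\}$ are dual to $\{\tp_F : F\in \Sigma_{i_0}^\ast[I]\}$, duality of these bases is preserved: $\check{\tp}_F(\check{\tQ}_{[G]})=\delta_{FG}$. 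Evaluating the expansion above on $\check{\tQ}_{[F]}$ therefore yields $a_F = f(\check{\tQ}_{[F]})$.

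Third, I would rewrite $f(\check{\tQ}_{[F]})$ using the Eulerian idempotent. If $F=(S_1,\dots,S_k)$, then by the construction immediately preceding the theorem,
\[ \check{\tQ}_{[F]} \;=\; \partial^\ast_{[F]}\bigl(\check{\tE}_{S_1}\otimes \dots \otimes \check{\tE}_{S_k}\bigr), \]
where $\partial^\ast_{[F]}$ denotes the iterated Lie bracket along the right-comb tree $[F]$. The Lie cobracket $\partial$ on $\check{\textbf{Z}}\textbf{ie}^\ast$ is by definition the linear dual of the Lie bracket $\partial^\ast$ on $\check{\textbf{Z}}\textbf{ie}$, so iterating this duality along $[F]$ gives
\[ f(\check{\tQ}_{[F]}) \;=\; f\bigl(\partial^\ast_{[F]}(\check{\tE}_{S_1}\otimes \dots \otimes \check{\tE}_{S_k})\bigr) \;=\; \partial_{[F]} f\,(\check{\tE}_{S_1}\otimes \dots \otimes \check{\tE}_{S_k}), \]
which together with $a_F=f(\check{\tQ}_{[F]})$ yields the claimed formula.

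The main obstacle is the careful bookkeeping in the last step: matching the nested right-comb bracket $\partial^\ast_{[F]}$ with the iteratively applied binary cobracket $\partial_{[F]}$ used in \cite{lno2019} requires unfolding both as a sequence of binary operations according to the tree shape of $[F]$ and verifying, at each node, that the binary pairing $\langle \partial_{[S,T]}\,h,\, u\otimes v\rangle = \langle h,\, \partial^\ast_{S,T}(u\otimes v)\rangle$ extends to the iterated version. This is a direct consequence of the duality between the Lie structures on $\check{\textbf{Z}}\textbf{ie}$ and $\check{\textbf{Z}}\textbf{ie}^\ast$, but the inductive verification along the tree is the only nontrivial ingredient; once it is in place, the expansion drops out by the dual-basis argument.
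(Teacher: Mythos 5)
Your proof is essentially correct and closely parallels the paper's own argument, but it packages things in a way that hides a small circularity. Your step 2 rests on the claim that the map $\mathcal{T}\mapsto \check{\tQ}_\mathcal{T}$ from the paper coincides with the inverse transpose of the isomorphism $\textbf{Zie}^\ast \to \check{\textbf{Z}}\textbf{ie}^\ast$, so that $\{\check{\tQ}_{[F]} : F\in\Sigma_{i_0}[I]\}$ is the dual basis to $\{\check{\tp}_F\}$. The paper never asserts this directly; it is precisely the content of the identity $\check{\tp}_G(\check{\tQ}_{[F]})=\delta_{FG}$, and establishing it requires exactly the computation the paper carries out: factorize $\partial_{[F]}\check{\tp}_G$ into $\check{\tp}_{G|_{S_1}}\otimes\cdots\otimes\check{\tp}_{G|_{S_k}}$ when $F\leq G$ (and $0$ otherwise), then pair against $\check{\tE}_{S_1}\otimes\cdots\otimes\check{\tE}_{S_k}$ using the $\tQ$/$\tP$-duality and the fact that $\check{\tE}_J$ is defined by $\check{\tp}_H(\check{\tE}_J)=\delta_{H,(J)}$. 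In other words, invoking ``duality of these bases is preserved'' is not free here; one still has to run the factorization-and-evaluation argument. Once you unfold your step 2 to include that computation, your proof and the paper's coincide up to ordering: the paper first expands $f$, applies $\partial_{[F]}$, factorizes, and evaluates at the idempotent, while you package the same content as a dual-basis pairing $a_F = f(\check{\tQ}_{[F]})$ followed by the bracket/cobracket transposition. Your phrasing is cleaner conceptually, but you should make explicit that the dual-basis identification is being proved, not imported.
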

\begin{proof}
Let $a_G\in \Bbbk$ be the coefficients in the expansion of $f$ in the $\check{\tp}$-basis,
\[       f=  \sum_{G\in \Sigma^\ast_{i_0}[I] }  a_G\, \check{\tp}_G.   \]
For $F=(S_1, \dots, S_k) \in \Sigma_{i_0}[I]$, we have
\[      \partial_{[F]} f=      \sum_{G\in \Sigma^\ast_{i_0}[I] }  a_G\,    \partial_{[F]}\check{\tp}_G .\]
Then
\[ \partial_{[F]}\check{\tp}_G    =  \begin{cases}
  \check{\tp}_{G|_{S_1}} \otimes \dots \otimes \check{\tp}_{G|_{S_k}}&\quad  \text{if}\ F\leq G \\
0\in \check{\textbf{Z}}\textbf{ie}^\ast(Q_F) &\quad \text{otherwise.}
\end{cases}   \]
Therefore, since the first Eulerian idempotent is equal to $\tQ_{(I)}$ for $I$ nonempty, and the $\tQ$-basis is dual to the $\tP$-basis, we have
\[  \partial_{[F]}\check{\tp}_G ( \check{\tE}_{S_1} \otimes \dots \otimes \check{\tE}_{S_k}  )   =   \delta_{FG}.     \]
Thus,
\[        \partial_{ [F] }  f  (\check{\tE}_{S_1} \otimes \dots \otimes \check{\tE}_{S_k})=a_F. \qedhere   \]
\end{proof}

\section{Generalized Retarded Functions}\label{QFT}

\noindent We finish by describing (at least one aspect of) the connection with pAQFT. We hope to further expose the role species have to play in QFT in future work. 

Let $\check{\cU}:  \check{\textbf{Z}}\textbf{ie}  \hookrightarrow \Sig$ denote the linear dual map of the composition
\[     
\Sig^\ast \xrightarrow{\sim} \adshuff  \twoheadrightarrow  \check{\textbf{Z}}\textbf{ie}^\ast 
,\qquad      
\tM_F\mapsto \check{\tM}_F \mapsto \check{\tm}_F       
.\]
Thus, the embedding $\check{\cU}$ realizes the adjoint geometric realization of the Lie algebra $\textbf{Z}\textbf{ie}$ as the primitive part of its universal enveloping algebra $\Sig$. Given an adjoint chamber $\check{\mathtt{H}}_\cS\in \check{\textbf{L}}^\vee[I]$, we denote by 
\[
\check{\mathtt{D}}_{\cS}\in         \check{\textbf{Z}}\textbf{ie}[I]= \bigslant{ \check{\bL}^\vee[I] }{ \textbf{Stein}[I] }     
\] 
its image under the quotient by the Steinmann relations.   

\begin{figure}[t]
	\centering
	\includegraphics[scale=0.6]{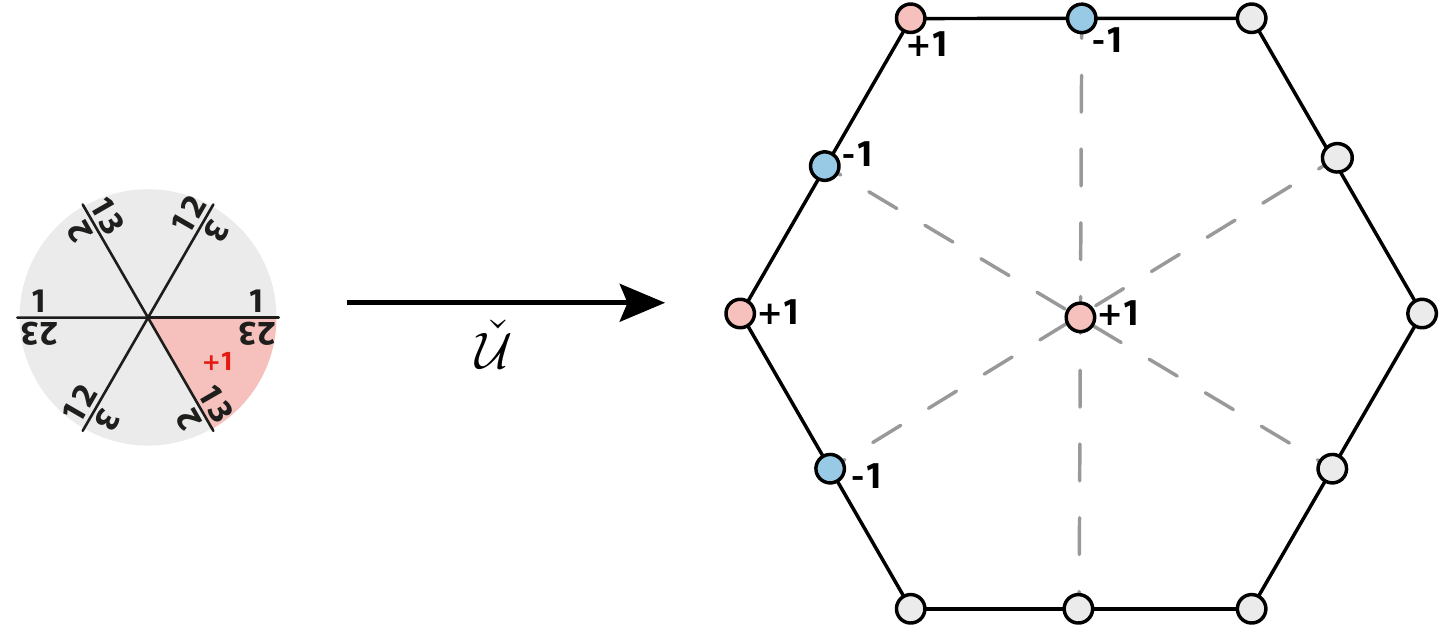}
	\caption{The primitive element of an adjoint chamber, over $I=\{1,2,3\}$. One can compute the primitive element by evaluating the $\check{\tM}$-basis on the chamber, or by using Epstein-Glaser-Stora's expansion in terms of the Tits product. The primitive element corresponds to the generalized retarded function of the chamber (`geometric cell'), expressed in terms of vacuum expectation values of operator products of \hbox{time-ordered} products.}
	\label{fig:zieofshard}
\end{figure}

\begin{prop}
The map $\check{\cU}$ is given by
\[  \check{\cU}:  \check{\textbf{Z}}\textbf{ie}  \hookrightarrow \Sig, \qquad   \check{\mathtt{D}}_{\cS} \mapsto  \mathtt{D}_\cS=\cU_{\cS}:=  \sum_{F\in \Sigma[I] }   \check{\tm}_F(\check{\mathtt{H}}_\cS) \,   \tH_F.\footnote{\ the notation $\cU_{\cS}$ is used in the physics literature, however we shall prefer the notation $\mathtt{D}_\cS$} \]
\end{prop}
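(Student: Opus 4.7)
The plan is to unwind the definitions: the map $\check{\cU}$ is by construction the linear dual of the composition $\phi: \Sig^\ast \xrightarrow{\sim} \adshuff \twoheadrightarrow \check{\textbf{Z}}\textbf{ie}^\ast$ with $\phi(\tM_F) = \check{\tm}_F$, so it suffices to evaluate $\check{\cU}(\check{\mathtt{D}}_\cS)$ against the $\tM$-basis of $\Sig^\ast[I]$ and read off its coordinates in the $\tH$-basis.

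First I would recall that the $\tH$-basis of $\Sig[I]$ was defined to be dual to the $\tM$-basis under the perfect pairing $\tH_F(\tM_G) = \delta_{FG}$. Consequently, any $y \in \Sig[I]$ admits the expansion
\[
y \;=\; \sum_{F \in \Sigma[I]} y(\tM_F)\, \tH_F .
\]
Applying this to $y = \check{\cU}(\check{\mathtt{D}}_\cS)$ and using the defining property of the dual map gives
\[
\check{\cU}(\check{\mathtt{D}}_\cS)(\tM_F) \;=\; \check{\mathtt{D}}_\cS\bigl(\phi(\tM_F)\bigr) \;=\; \check{\mathtt{D}}_\cS(\check{\tm}_F) ,
\]
so the statement reduces to identifying $\check{\mathtt{D}}_\cS(\check{\tm}_F)$ with the evaluation $\check{\tm}_F(\check{\mathtt{H}}_\cS)$.

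This identification is just the explicit form of the perfect pairing between $\check{\textbf{Z}}\textbf{ie}$ and $\check{\textbf{Z}}\textbf{ie}^\ast$. By construction, $\check{\textbf{Z}}\textbf{ie}^\ast[I]$ is a quotient of $\adshuff[I]$ (a space of functionals on $\check{\Sig}^\vee[I]$), and $\check{\textbf{Z}}\textbf{ie}[I] = \check{\bL}^\vee[I]/\textbf{Stein}[I]$ is the quotient of the span of adjoint chambers by the Steinmann relations. The bilinear form
\[
\check{\bL}^\vee[I] \otimes \adshuff[I] \;\longrightarrow\; \Bbbk , \qquad \check{\mathtt{H}}_\cS \otimes f \;\longmapsto\; f(\check{\mathtt{H}}_\cS)
\]
descends to a perfect pairing on the two quotients, precisely because a functional in $\adshuff[I]$ descends to $\check{\textbf{Z}}\textbf{ie}^\ast[I]$ if and only if it satisfies the Steinmann relations (the content of \autoref{maincor}). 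Under this pairing, $\langle \check{\mathtt{D}}_\cS, \check{\tm}_F \rangle = \check{\tm}_F(\check{\mathtt{H}}_\cS)$, and substituting into the expansion above yields the asserted formula.

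Since the argument is a direct unwinding of definitions, there is no genuine obstacle; the only point that requires care is checking that the evaluation pairing indeed descends to the quotients, and this is already built into the identifications $\adshuff = \Hom(\check{\Sig}^\vee,\Bbbk)$ modulo functionals vanishing on chambers, together with the definition of $\textbf{Stein}$ as the kernel of evaluation against Steinmann functionals.
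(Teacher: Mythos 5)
Your proof is correct and follows essentially the same route as the paper: use the duality $\tH_F(\tM_G) = \delta_{FG}$ to expand $\check{\cU}(\check{\mathtt{D}}_\cS)$ in the $\tH$-basis, then evaluate coefficients via the defining property of the dual map, which gives $\check{\cU}(\check{\mathtt{D}}_\cS)(\tM_F) = \check{\mathtt{D}}_\cS(\check{\tm}_F) = \check{\tm}_F(\check{\mathtt{H}}_\cS)$. You spend an extra paragraph justifying that the evaluation pairing between $\check{\bL}^\vee$ and $\adshuff$ descends to the quotients, which the paper treats as already built into its identification $\check{\textbf{Z}}\textbf{ie}[I] = \Hom(\check{\textbf{Z}}\textbf{ie}^\ast[I],\Bbbk)$; this is a reasonable point of care, but not a genuinely different argument.
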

\begin{proof}
This follows immediately from the fact that the $\tH$-basis and $\tM$-basis are dual. Indeed, we have that $\check{\mathtt{D}}_\cS\in \check{\textbf{Z}}\textbf{ie}[I]= \Hom(\check{\textbf{Z}}\textbf{ie}^\ast[I], \Bbbk)$ is given by
\[    \check{\tm}_G\mapsto   \check{\tm}_G( \check{\mathtt{H}}_\cS).  \]
Therefore $\check{\cU}(\check{\mathtt{D}}_\cS)$ is given by
\[   \check{\cU}(\check{\mathtt{D}}_\cS):\Sig^\ast[I]\to \Bbbk, \qquad   \tM_G\mapsto    \check{\tm}_G( \check{\mathtt{H}}_\cS)=    \sum_{F\in \Sigma[I] }   \check{\tm}_F( \check{\mathtt{H}}_\cS) \,   \tH_F\ (\tM_G)  . \qedhere  \]
\end{proof}
See \autoref{fig:zieofshard}. Recalling our definition of $\check{\tm}_F$, we see that the embedding $\check{\cU}$ appears in the mathematical foundations of perturbative QFT, where it expresses generalized retarded products in terms of operator products of time-ordered products, and hence also generalized retarded functions in terms of vacuum expectation values of products of time-ordered products, sometimes called generalized time-ordered functions. As far as we are aware, its first appearance is in \cite[Equation 79, p. 260]{ep73roleofloc}. See also \cite[Equation 1, p.26]{epstein1976general}, and more recently \cite[Equations 35, 36]{epstein2016}. 

The elements $\mathtt{D}_\cS \in \Sig[I]$, for $\cS\in \text{L}^\vee[I]$, were defined for generic real hyperplane arrangements in \cite[Equation 14.1]{aguiar2017topics} (so in particular for reflection hyperplane arrangements of other Dynkin types). Aguiar and Mahajan call these special primitive elements \emph{Dynkin elements}. From this perspective, the Steinmann relations turn up as the kernel of the map
\[ \textbf{L}^\vee \twoheadrightarrow \textbf{Zie} \hookrightarrow   \Sig, \qquad   \cS  \mapsto \mathtt{D}_\cS.\]
That is, the Steinmann relations are a set of generating relations for the type $A$ Dynkin elements. The $4$-point Steinmann relations appear in \cite[Exercise 14.67]{aguiar2017topics}. For a direct proof that the $\mathtt{D}_\cS$ are indeed primitive elements, i.e. without using the dual polyhedral algebra construction of this paper, see \cite[Proposition 14.1]{aguiar2017topics}.


The \emph{Tits product} is the action of $\Sigma$ on itself by Hopf powers, thus
\[      \Sigma\times_{ \text{Had} } \Sigma \to \Sigma, \qquad   (F,G) \mapsto F\cdot G:= \mu_F \big (  \Delta_F(  G  ) \big ) .      \]
Explicitly, if $F=(S_1, \dots , S_{k_F})$ and $G=(T_1,\dots, T_{k_G})$ are compositions of $I$, then
\[  F\cdot G=  (  T_1\cap S_1, \dots, T_{k_G}\cap S_1, \dots \dots, T_{1}\cap S_{k_F}, \dots ,   T_{k_G}\cap S_{k_F}        )_+     . \]
The Tits product is associative and unital, and so promotes $\Sigma$ to a presheaf of monoids,
\[\Sigma: \textsf{S}^{\text{op}}\to \textsf{Mon},\qquad I\mapsto \Sigma[I].\]
We then linearize the Tits product, to obtain
\[     \Sig\otimes_{ \text{Had} }   \Sig \to \Sig, \qquad   \tH_F\otimes \tH_G \mapsto  \tH_F\triangleright \tH_G :=   \tH_{F\cdot G}  .      \]
This promotes $\Sig$ to a presheaf of $\Bbbk$-algebras. The components of this species are known as Tits algebras, or algebras of proper sequences in Epstein-Glaser-Stora's formalism \cite[Section 4.1]{epstein1976general}. In QFT, the Tits product is motivated by considering causal factorization, or causal additivity, \cite[p.2]{epstein2016}. In \cite[Lemma 8]{epstein1976general}, the following expansion of $\mathtt{D}_\cS$ is given. 

\begin{prop}[Epstein-Glaser-Stora]
For $\cS\in \text{L}^\vee[I]$, we have
\[     
\mathtt{D}_{\cS}=\prod^{\text{Tits}}_{ (S,T)\in \cS}   \big (\tH_{(  I )}-   \tH_{(  T,S )}\big  )       
\]
where the right-hand side is well-defined because these elements commute in the Tits product.
\end{prop}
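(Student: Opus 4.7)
The plan is to compute both sides of the identity in the $\tH$-basis of $\Sigma[I]$ and match coefficients. For the left-hand side, combining the definition $\mathtt{D}_{\cS}=\sum_{F\in\Sigma[I]}\check{\tm}_F(\check{\mathtt{H}}_{\cS})\,\tH_F$ with the geometric description of $\check{\tM}_F$ as $(-1)^{l(F)-1}$ times the characteristic functional of the open permutohedral cone $\wt{\sigma}^\vee_{\bar{F}}$, and with the criterion $\check{\tC}_p(\check{\mathtt{H}}_{\cS})=1 \iff \cS\leq\cF_p$, yields
\[
\check{\tm}_F(\check{\mathtt{H}}_{\cS}) =
\begin{cases}
(-1)^{l(F)-1} & \text{if } \cF_{\bar{F}}\subseteq\cS,\\
0 & \text{otherwise,}
\end{cases}
\]
where for $F=(S_1,\dots,S_k)$ the ``splits'' $\cF_{\bar{F}}$ are the $l(F)-1$ two-lump compositions $(S_{j+1}\cup\dots\cup S_k,\,S_1\cup\dots\cup S_j)$ for $j=1,\dots,k-1$.

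For the right-hand side, I would fix any linear order on $\cS$ and distribute:
\[
\prod^{\text{Tits}}_{(S,T)\in\cS}\bigl(\tH_{(I)}-\tH_{(T,S)}\bigr) = \sum_{X\subseteq\cS}(-1)^{|X|}\,\tH_{F_X},
\]
where $F_X\in\Sigma[I]$ is the iterated Tits product of the two-lump compositions $\{(T,S):(S,T)\in X\}$ in the chosen order. Regrouping by resulting composition yields $\sum_F \alpha_F\,\tH_F$ with $\alpha_F=\sum_{X\subseteq\cS,\,F_X=F}(-1)^{|X|}$, so the statement reduces to the combinatorial identity $\alpha_F=\check{\tm}_F(\check{\mathtt{H}}_{\cS})$ for every $F\in\Sigma[I]$.

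To establish this identity, I would use the geometric interpretation of the Tits product: each two-lump composition $(T,S)$ is a codimension-one face of the braid arrangement in $\mathrm{T}^I$, and the iterated product $F_X$ is the face obtained by taking the leftmost factor as primary partition and refining within its lumps using the remaining factors. A case analysis then gives: if some split of $F$ is absent from $\cS$ then the ordering of $F$ cannot be produced by any $X\subseteq\cS$, so $\alpha_F=0$; if $\cF_{\bar{F}}\subseteq\cS$, then an inclusion-exclusion argument over the subsets of $\cS$ that contribute $F_X=F$, organized around the splits $\cF_{\bar{F}}$ and the ``redundant'' elements of $\cS$ compatible with $F$, collapses $\alpha_F$ to $(-1)^{l(F)-1}$. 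The well-definedness (order-independence) of the product then follows, since the result $\sum_F \alpha_F\,\tH_F=\mathtt{D}_{\cS}$ depends only on $\cS$.

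The main obstacle is the final combinatorial step. The Tits product is non-commutative: the pair factors $\tH_{(I)}-\tH_{(T_1,S_1)}$ and $\tH_{(I)}-\tH_{(T_2,S_2)}$ do not in general commute on their own, and only the full triple (and higher) Tits products acquire the required invariance under reordering through delicate cancellations (for instance identities of the form $(ab-ba)\triangleright c=0$ when the three factors come from a common adjoint chamber). The cleanest way to manage all of this uniformly is through the Aguiar-Mahajan theory of Dynkin elements for generic real hyperplane arrangements \cite[Chapter 14]{aguiar2017topics}, which constructs such alternating products over the walls of a generic halfspace, shows they are well-defined primitive elements of the Tits algebra, and identifies them with the adjoint-chamber basis via normal-fan duality. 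Specialization to type $A$, combined with the adjoint polyhedral realization of $\Sig^\ast$ from \autoref{georel}, yields simultaneously the commutativity claim and the claimed expansion of $\mathtt{D}_{\cS}$.
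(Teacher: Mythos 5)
The paper itself provides no proof---it attributes the result to Epstein-Glaser-Stora and cites \cite[Lemma 8]{epstein1976general}---so there is no internal argument to compare against, and your proposal stands on its own. Your reduction is correct: $\check{\tm}_F(\check{\mathtt{H}}_\cS)=(-1)^{l(F)-1}$ when $\cF_{\bar{F}}\subseteq\cS$ and $0$ otherwise (the open chamber lies in the closed cone $\sigma^\vee_{\bar{F}}$ iff all splits of $\bar{F}$ belong to $\cS$, and an open chamber inside a closed full-dimensional cone must lie in its interior), and the expansion $\sum_{X\subseteq\cS}(-1)^{|X|}\,\tH_{F_X}$ reduces the claim to the combinatorial identity $\alpha_F=\check{\tm}_F(\check{\mathtt{H}}_\cS)$. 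You are also right that the factors $\tH_{(I)}-\tH_{(T,S)}$ do not pairwise commute in the Tits algebra: already for $|I|=3$, with $(1,23),(2,13)\in\cS$, one has $(23,1)\cdot(13,2)=(3,2,1)$ but $(13,2)\cdot(23,1)=(3,1,2)$. So the paper's parenthetical ``these elements commute in the Tits product'' is, strictly read, imprecise; the true statement is order-independence of the full alternating product, and you correctly diagnose this.

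Two issues remain. First, the parenthetical cancellation you offer, $(ab-ba)\triangleright c=0$ for factors from a common adjoint chamber, does not in fact hold: in the same $n=3$ example with $c=\tH_{(3,12)}$ one gets $(ab-ba)\triangleright c=\tH_{(3,2,1)}-\tH_{(3,1,2)}\neq 0$. The cancellations ensuring order-independence occur between terms of different cardinalities $|X|$ in the alternating sum (a degree-$2$ term cancels against the degree-$3$ term), not by a commutator being absorbed into a third factor. Second---and this is the real gap---the closing combinatorial step, establishing $\alpha_F=(-1)^{l(F)-1}$ precisely when $\cF_{\bar{F}}\subseteq\cS$, is never carried out; the promised inclusion-exclusion over ``redundant'' elements of $\cS$ compatible with $F$ is gestured at and then handed off to \cite[Chapter 14]{aguiar2017topics}. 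That citation is valid and does settle both the identity and well-definedness via the Dynkin-element machinery for generic arrangements, so the proposal is correct by reference, but the intrinsic combinatorial argument it sets up is left unfinished.
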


For more on the structure of Dynkin elements, see \cite[Chapter 14]{aguiar2017topics} and \cite[Section 14.5]{aguiar2013hopf}. Note that in the notation of \cite{aguiar2013hopf}, the Dynkin element of \autoref{fig:zieofshard} is $\mathtt{D}_3$. This corresponds to the fact that the adjoint face contains the orthogonal projection of the basis element $e_3\in \bR I$ onto $\text{T}_I^\vee$.

\bibliographystyle{alpha}
\bibliography{steinmann}

\end{document}